\numberwithin{equation}{section}
\theoremstyle{plain}
\newtheorem{thm}{Theorem}[section]
\newtheorem{lem}[thm]{Lemma}
\newtheorem{prop}[thm]{Proposition}
\newtheorem{cor}[thm]{Corollary}
\newtheorem*{main}{Main Theorem}
\theoremstyle{definition}
\newtheorem{defn}[thm]{Definition}
\newtheorem*{hyp}{Hypothesis}
\theoremstyle{remark}
\newtheorem*{rem}{Remark}
\newcommand{\mb}{\mathbf}
\newcommand{\mc}{\mathcal}
\newcommand{\mf}{\mathfrak}
\newcommand{\Z}{\mathbf{Z}}
\newcommand{\Q}{\mathbf{Q}}
\newcommand{\R}{\mathbf{R}}
\newcommand{\C}{\mathbf{C}}
\newcommand{\A}{\mathbf{A}}
\renewcommand{\O}{\mathcal{O}}
\newcommand{\GL}{\mathrm{GL}}
\newcommand{\BC}{\mathrm{BC}}
\newcommand{\cusp}{\mathrm{cusp}}
\newcommand{\U}{\mathrm{U}}
\renewcommand{\sp}{\mathrm{sp}}
\newcommand{\Iw}{\mathrm{Iw}}
\newcommand{\As}{\mathrm{As}}
\newcommand{\id}{\mathrm{id}}
\newcommand{\ord}{\mathrm{ord}}
\newcommand{\SL}{\mathrm{SL}}
\newcommand{\SU}{\mathrm{SU}}
\newcommand{\alg}{\mathrm{alg}}
\DeclareMathOperator{\Sym}{Sym}
\DeclareMathOperator{\ad}{ad}
\DeclareMathOperator{\tr}{tr}
\DeclareMathOperator{\Hom}{Hom}
\DeclareMathOperator{\End}{End}
\DeclareMathOperator{\Spec}{Spec}
\DeclareMathOperator{\Ev}{Ev}
\DeclareMathOperator{\Frac}{Frac}
\title{A $p$-adic $L$-function for non-critical adjoint $L$-values}
\author{Pak-Hin Lee}
\thanks{The author is supported by EPSRC Standard Grant EP/S020977/1 and Croucher Fellowship for Postdoctoral Research.}
\address{Mathematics Institute, University of Warwick, Coventry, CV4 7AL, United Kingdom}
\email{Pak-Hin.Lee@warwick.ac.uk}
\date{\today}
\begin{document}

\begin{abstract}
Let $K$ be an imaginary quadratic field, with associated quadratic character $\alpha$. We construct an analytic $p$-adic $L$-function interpolating the twisted adjoint $L$-values $L(1, \ad(f) \otimes \alpha)$ as $f$ varies in a Hida family; these special values are non-critical in the sense of Deligne. Our approach is based on Greenberg--Stevens' idea of $\Lambda$-adic modular symbols, which considers cohomology with values in a space of $p$-adic measures.
\end{abstract}

\maketitle

\tableofcontents

\section{Introduction}

\subsection{Background}

Let $f \in S_k(N, \chi)$ be a primitive cuspidal eigenform of weight $k \geq 2$, level $N$, and Nebentype $\chi$. The adjoint $L$-function of $f$ is defined as the Euler product
\[
L(s, \ad(f)) = \prod_\ell L_\ell(s, \ad(f)),
\]
where
\[
L_\ell(s, \ad(f)) = \left[ \left( 1 - \frac{\alpha_\ell}{\beta_\ell} \ell^{-s} \right) (1-\ell^{-s}) \left( 1 - \frac{\beta_\ell}{\alpha_\ell} \ell^{-s} \right) \right]^{-1}
\]
at the unramified places $\ell$, and $\alpha_\ell$ and $\beta_\ell$ are the roots of the $\ell$-th Hecke polynomial $X^2 - a_\ell(f) X + \chi(\ell) \ell^{k-1}$. By the work of Shimura \cite{S75} (and \cite{GJ} for general automorphic representations of $\GL_2$), $L(s, \ad(f))$ admits meromorphic continuation to $s \in \C$ and satisfies a functional equation under $s \leftrightarrow 1-s$. In \cite{H81a}, Hida established an integrality result for the algebraic part $L^\alg(1, \ad(f))$ (i.e., $L(1, \ad(f))$ divided by the product of the Manin periods), and shows that the prime factors of $L^\alg(1, \ad(f))$ are congruence primes of $f$, i.e., primes $p$ for which there exists another eigenform $g$ such that $f \equiv g \pmod{p}$. The converse was proved in \cite{H81b} for primes which are ordinary for $f$, and in \cite{R83} in general.

In the ordinary case, Hida's criterion is deduced from a precise identity relating the adjoint $L$-value $L(1, \ad(f))$ with the size of the congruence module associated with $f$ \cite{H81a, H88b}, which allows him to construct an \emph{algebraic} $p$-adic $L$-function interpolating these values as $f$ varies in an ordinary family. The congruence module is in turn related to the Selmer group of the adjoint motive of $f$ by Wiles' proof of Fermat's last theorem. This connection be understood as a non-abelian class number formula, or more precisely the Bloch--Kato conjecture for the adjoint motive of $f$.

In his PhD thesis \cite{U95}, Urban generalized Hida's results to the case of Bianchi modular forms $F$ over an imaginary quadratic field $K$; namely, he shows that $L(1, \ad(F))$ modulo suitable periods $u_1(F)$, $u_2(F)$, which are defined via comparison between de Rham and Betti cohomologies of the associated Bianchi threefold, is an algebraic integer and gives the size of the congruence module of $F$; hence, its prime factors are congruence primes of $F$. When $F = f_K$ is the base-change of a classical eigenform $f$ to $K$, there is a factorization
\[
L(s, \ad(F)) = L(s, \ad(f)) L(s, \ad(f) \otimes \alpha),
\]
where $\alpha$ is the quadratic character associated with $K$ by class field theory.  By combining the results of Hida and Urban, the value $L(1, \ad(f) \otimes \alpha)$ thus controls congruences between the base-change of $f$ and non-base-change Bianchi forms on $K$. It is worth noting that $L(1, \ad(f) \otimes \alpha)$ is \emph{non-critical} in the sense of Deligne.

The value $L(1, \ad(f) \otimes \alpha)$ was further studied by Hida \cite{H99}, where an integral formula is proved despite its non-criticality. From this, Hida establishes an integrality result for $L(1, \ad(f) \otimes \alpha)$ modulo the period $u_2(f_K)$ and formulates a conjecture relating this value with, in essence, the Selmer group of the adjoint motive of $f$ twisted by $\alpha$. In a recent work \cite{TU18}, Tilouine and Urban established integral period relations of base-change modular forms and prove a version of Hida's conjecture.

It is natural to interpolate the result of \cite{TU18} in a $p$-adic family and formulate an analogue of the Iwasawa--Greenberg main conjecture for $\ad(f) \otimes \alpha$ in the weight direction. As a first step towards this, our goal is to construct an \emph{analytic} $p$-adic $L$-function interpolating the values $L(1, \ad(f) \otimes \alpha)$ as $f$ varies in a $p$-adic family.

\subsection{Main result}

As before, let $K$ be an imaginary quadratic field with corresponding quadratic character $\alpha$, and $p$ be an odd prime which is split in $K$. Fix a sufficiently large $p$-adic ring of integers $\O$, and denote by $\Lambda_\Q := \O[[1+p\Z_p]]$ the Iwasawa algebra. Let $\omega: (\Z/p\Z)^\times \to \Z_p^\times$ be the $p$-adic Teichm\"uller character.

In this paper, we construct an analytic $p$-adic $L$-function that interpolates the algebraic part of $L(1, \ad(f) \otimes \alpha)$ as $f$ varies in a Hida family. For simplicity, we confine our attention to the simplest setting when the eigenforms $f$ involved have Nebentype $\alpha$ (and necessarily odd weight); a more general construction following similar ideas will be carried out in a future work.

To this end, consider a Hida family $\lambda: \mb{h}_\Q \to \mb{I}$ with tame character $\alpha \omega^r$ for some integer $r$ (necessarily odd); here $\mb{h}_\Q$ is the universal ordinary Hecke algebra for $\GL_2(\Q)$ and $\mb{I}$ is a finite flat extension of $\Lambda_\Q$ (see \cite{H86a, H86b}). For further simplicity, let us state the main result in the case $\mb{I} = \Lambda_\Q$. Thus $\lambda$ gives an ordinary $\Lambda_\Q$-adic form $\mb{f} \in \mb{S}^\ord(N, \alpha \omega^r; \Lambda_\Q)$ which specializes to an ordinary eigenform $\mb{f}_n \in S_n(Np, \alpha \omega^{r-n})$ at each $n \in \Z_{\geq 2}$; in particular, when $n \equiv r \pmod{p-1}$, $\mb{f}_n \in S_n(Np, \alpha)$ has Nebentype $\alpha$. Denote
\[
A_r := \{n \in \Z_{\geq 2}: n \equiv r \pmod{p-1}\},
\]
whch is a dense subset of $\Z_p$.

\begin{main}
Suppose $\mb{f} \in \mb{S}^\ord(N, \alpha \omega^r; \Lambda_\Q)$ is a Hida family. Then there exists $\mc{L} \in \Frac(\Lambda_\Q)$ which specializes to
\[
c_n \cdot (1 - a_p(\mb{f}_n)^{-2} p^{n-1}) L^\alg(1, \ad(\mb{f}_n) \otimes \alpha)
\]
at almost all $n \in A_r$. Here the algebraic part $L^\alg$ is defined in Proposition \ref{alg-part}, and the $p$-adic error term $c_n \in \overline{\Q_p}^\times$ is defined in Definition \ref{error}.
\end{main}

To the best of our knowledge, this is the first instance of a $p$-adic $L$-function that interpolates non-critical $L$-values. A precise statement, which covers the case of $\mb{I}$-adic forms (still with tame character $\alpha \omega^r$), is given in Theorem \ref{main-thm}.

\begin{rem}
\leavevmode
\begin{enumerate}
\item We make no attempt to control the $p$-adic error terms $c_n$ -- see the discussion following Definition \ref{error} -- but simply remark that they are of a similar nature to those appearing in the $p$-adic $L$-function of \cite{GS93}. In the Bianchi setting, the three-variable $p$-adic $L$-function constructed in \cite{BW-Bianchi} involves $p$-adic error terms defined using $H^1$ of the Bianchi threefold, whereas our $p$-adic error terms are defined using $H^2$.

\item For a Hida family with general tame character, a similar construction should yield a $p$-adic $L$-function whose specializations on (a translate of) $A_r$ can be determined. Although $A_r$ is a dense subset of $\Z_p$, it would be more satisfactory to relax this condition. The difficulty is caused by the form of Hida's integral formula for $L(1, \ad(f) \otimes \alpha)$, and will be briefly addressed in the remark following Theorem \ref{main-thm}.
\end{enumerate}
\end{rem}

\subsection{Outline}

We give an overview of the strategy and this paper.

The starting point is Hida's integral formula for $L(1, \ad(f) \otimes \alpha)$ in \cite{H99}, which is reviewed in Section \ref{2} following a terse account on Bianchi modular forms. Denote by $f_K$ the base-change Bianchi form of $f$. For $q=1,2$, the Eichler--Shimura--Harder isomorphism associates to $f_K$ a cohomology class
\[
\delta_q(f_K) \in H_\cusp^q(Y_K, V_{n,n}(\C)),
\]
which can be represented by a harmonic $q$-form on the Bianchi threefold $Y_K$; here $V_{n,n}(\C)$ is the local system corresponding to the irreducible polynomial representation $\Sym^n \C^2 \otimes \overline{\Sym^n \C^2}$ of $\GL_2(K)$.

Hida's formula expresses the value $L(1, \ad(f) \otimes \alpha)$ as a suitable integral of $\delta_2(f_K)$ over the modular curve $Y_\Q$ viewed as a $2$-cycle in $Y_K$. This can be summarized as a linear form
\[
\mathbb{L}_n: H_c^2(Y_K, V_{n,n}(\C)) \to H_c^2(Y_\Q, V_{n,n}(\C)) \to H_c^2(Y_\Q, \C) \overset{\sim}{\to} \C,
\]
where the first map is induced by restriction along $Y_\Q \hookrightarrow Y_K$, the second map is induced by the canonical projection $V_{n,n}(\C) \to \C$ of $\SL_2(\Z)$-representations under the Clebsch--Gordan decomposition, and the third map is integration over $Y_\Q$. A key feature of the linear form $\mathbb{L}_n$ is that it makes sense over a number field in place of $\C$ (hence over a $p$-adic field). Upon normalizing $\delta_2(f_K)$ by Urban's period $u_2(f_K)$, we obtain a cohomology class $\widehat{\delta}_2(f_K) \in H_c^2(Y_K, V_{n,n}(\O))$ defined over some valuation ring $\O$, and the algebraic part of $L(1, \ad(f) \otimes \alpha)$:
\[
L^\alg(1, \ad(f) \otimes \alpha) = \mathbb{L}_n \left( \widehat{\delta}_2(f_K) \right) \in \O.
\]

In Section \ref{3} we begin working $p$-adically, replacing the coefficient ring $\O$ by a $p$-adic completion. To construct a $p$-adic $L$-function for $L^\alg(1, \ad(f) \otimes \alpha)$, we adopt the standard technique of $\Lambda$-adic modular symbols following Mazur--Kitagawa \cite{K94} and Greenberg--Stevens \cite{GS93}. The problem essentially boils down to the following steps:
\begin{enumerate}[label=(\alph*)]
\item constructing an Iwasawa module $\mb{M}$ that specializes to $H_c^2(Y_K, V_{k,k}(\O))$ for each $k$;
\item defining a linear form on $\mb{M}$ that interpolates $\mathbb{L}_k: H_c^2(Y_K, V_{k,k}(\O)) \to \O$ as $k$ varies $p$-adically over the weight space;
\item extracting an element in (the ordinary part of) $\mb{M}$ that belongs to a system of Hecke eigenvalues corresponding to a given Hida family.
\end{enumerate}

Following \cite{GS93}, $\mb{M}$ will be the cohomology group $H_c^2(Y_K, \mc{D})$ with coefficients in an appropriate space of measures $\mc{D}$, which is a module over the Iwasawa algebra $\Lambda = \O[[\Z_p^\times]]$. The key properties are that:
\begin{itemize}
\item $\mc{D}$ admits natural specialization maps $\mc{D} \to V_{k,k}(\O)$ for all $k$ (see Section \ref{section-specialization});
\item the Clebsch--Gordan projection $V_{k,k}(\O) \to \O$ involved in Hida's evaluation map $\mathbb{L}_k$ is the specialization of a map $\mc{D} \to \Lambda$ (see Section \ref{section-evaluation}).
\end{itemize}
Then one might expect the induced diagram
\[
\xymatrix{
H_c^2(Y_K, \mc{D}) \ar[r]^-{\mathbb{L}_\Lambda} \ar[d] & \Lambda \ar[d] \\
H_c^2(Y_K, V_{k,k}(\O)) \ar[r]^-{\mathbb{L}_k} & \O
}
\]
to provide an answer to (a) and (b). Unfortunately this diagram does not commute, but the defect can be explicitly computed as an Euler factor; this cohomological calculation is carried out in Section \ref{4}.

In Section \ref{5}, we address (c) by applying Hida theory to find a desired eigensystem $\mc{F}$ in $H_c^2(Y_K, \mc{D})$. Comparing its specializations with the normalized classes $\widehat{\delta}_2(-) \in H_c^2(Y_K, V_{k,k}(\O))$ gives the $p$-adic error terms $c_k$. 

\begin{rem}
It is best to refrain from calling these \emph{$p$-adic periods} because they do not arise from the comparison between cohomological realizations of the underlying motive.
\end{rem}

Finally, the $p$-adic $L$-function is constructed by evaluating $\mathbb{L}_\Lambda$ at the class $\mc{F}$.

\subsection{Further directions}

As previously explained, one of the motivations for constructing an analytic $p$-adic $L$-function for $L(1, \ad(f) \otimes \alpha)$ is to study a family version of the results of \cite{TU18}. To formulate an analogue of the Iwasawa--Greenberg main conjecture in this setting (where the $L$-value is non-critical), it is necessary to clarify the nature of the $p$-adic error terms, which seems difficult at the moment.

In another direction, it should be evident to the reader that the construction in this paper can be readily adapted to the setting of overcohomology cohomology and yield a $p$-adic $L$-function $L_p(1, \ad(f) \otimes \alpha)$ on the eigencurve (i.e., on Coleman families of finite-slope eigenforms), albeit only over a neighborhood of the weight space. Similarly to how the zeros of the $p$-adic adjoint $L$-function $L_p(1, \ad(f))$ are related to the ramification locus of the eigencurve \cite{Kim, Bellaiche}, we expect the zeros of $L_p(1, \ad(f) \otimes \alpha)$ to encode the intersection locus between the base-change and non-base-change components in the imaginary quadratic eigencurve. We hope to return to this question in the future.

\section{Preliminaries}

\label{2}

We begin by giving a brief account of Bianchi modular forms, i.e., automorphic forms on $\GL_2(K)$. Rather than giving a self-contained survey, our focus is merely on setting up notation; therefore we will omit details that are not necessary for the remainder of this paper, and refer the reader to other sources, such as \cite{H94-L}, \cite{U95}, \cite{G99} and \cite{W17}.

In the second half of this section, we recall Hida's integral formula for $L(1, \ad(f) \otimes \alpha)$ in \cite{H99} and interpret it in terms of cohomology.

\subsection{Bianchi modular forms}

Let $K = \Q(\sqrt{-D})$ be an imaginary quadratic field with discriminant $-D$ and different $\mf{d} = (\sqrt{-D})$. Let $\alpha = \left( \frac{-D}{} \right)$ be the quadratic character associated with $K$, which is odd.

Let $\O_K$ be the ring of integers, $\widehat{\O}_K = \O_K \otimes_\Z \widehat{\Z}$, and $\A_K$ (resp.\ $\A_K^\times$, $\A_{K,f}$, $\A_{K,f}^\times$) be the ring of adeles (resp.\ ideles, finite adeles, finite ideles) over $K$.

For any commutative ring $R$, let $W_n(R)$ be the space of homogeneous polynomials in two variables (denoted $S$ and $T$) of degree $n$ with coefficients in $R$, equipped with the natural right action of $\GL_2$: for $k = \begin{pmatrix} a & b \\ c & d \end{pmatrix} \in \GL_2(R)$ and $P \in W_n(R)$, we have
\[
(P|k)(S,T) = P((S, T) \cdot {}^t k),
\]
i.e., $(P|k)(S, T) = P(aS+bT, cS+dT)$. In particular, this restricts to a right action of $\SU_2(\C)$ on $W_n(\C)$.

\begin{rem}
\leavevmode
\begin{itemize}
\item Later we will use $V_n(R)$ to denote the same space, but considered with a left action of $\GL_2$.
\item For consistency with literature, we use the slightly misleading notation $\SU_2(\C) \subset \GL_2(\C)$ to denote the special unitary group, despite it being the real points of a group scheme.
\end{itemize}
\end{rem}

\subsubsection{Adelic automorphic forms}

\begin{defn}
Let $U$ be a compact open subgroup of $\GL_2(\A_{K,f})$. Denote by $S_{n,n}(U)$ the space of cuspidal automorphic forms on $\GL_2(K)$ of parallel weight $(n,n)$ and level $U$, which are functions $F: \GL_2(\A_K) \to W_{2n+2}(\C)$ satisfying:
\begin{enumerate}
\item $F(\gamma g) = F(g)$ for all $\gamma \in \GL_2(K)$;
\item $F(gu) = F(g)$ for all $u \in U$;
\item $F(zgk) = |z|^{-n} \cdot F(g)|k$ for all $z \in Z(\GL_2(\C)) \cong \C^\times$ and $k \in \SU_2(\C)$;
\item a harmonicity condition;
\item a cuspidality condition.
\end{enumerate}
\end{defn}

\begin{rem}
More generally, cohomological weights for $\GL_2(K)$ are paramatrized by pairs $(\mb{n}, \mb{v})$, where $\mb{n} = (n,n) \in \Z^2$ with $n \geq 0$ and $\mb{v} = (v_1, v_2) \in \Z^2$ is arbitrary, but the special case $\mb{v} = (0,0)$ is sufficient for our setting.
\end{rem}

In this paper, we will restrict our attention to $S_{n,n}(U_0(N))$, where
\[
U_0(N) = \left\{ \begin{pmatrix} a & b \\ c & d \end{pmatrix} \in \GL_2(\widehat{\O}_K): c \in N \widehat{\O}_K \right\}
\]
for some positive integer $N$; in other words, the Bianchi cusp forms we consider will always have level $N$ and \emph{trivial} Nebentype.

Every $F \in S_{n,n}(U_0(N))$ has a Fourier--Whittaker expansion
\begin{equation}
\label{Fourier-adelic}
F \begin{pmatrix} y & x \\ 0 & 1 \end{pmatrix} = |y|_{\A_K} \sum_{\xi \in K^\times} a(\xi y \mf{d}; F) W(\xi y_\infty) \mb{e}_K(\xi x)
\end{equation}
where:
\begin{itemize}
\item $|\cdot|_{\A_K}$ is the usual idele character on $\A_K^\times$ trivial on $K^\times$;
\item $\mf{d} = (\sqrt{-D})$ is the different of $K/\Q$;
\item $\mf{n} \mapsto a(\mf{n}; F)$ is a $\C$-valued function on the fractional ideals of $K$ which vanishes outside the set of integral ideals;
\item $W: \C^\times \to W_{2n+2}(\C)$ is the Whittaker function
\[
W(s) = \sum_{\alpha = 0}^{2n+2} \binom{2n+2}{\alpha} \left( \frac{s}{i|s|} \right)^{n+1-\alpha} K_{\alpha-n-1}(4\pi|s|) S^{2n+2 - \alpha} T^{\alpha},
\]
where $K_\alpha$ is the modified Bessel function;
\item $\mb{e}_K$ is the standard additive character of $\mb{A}_K$ trivial on $K$, given by
\[
\mb{e}_K = (\mb{e}_\infty \circ \tr_{\C/\R}) \cdot \prod_\mf{p} (\mb{e}_p \circ \tr_{K_\mf{p}/\Q_p}),
\]
where $\mb{e}_\infty(r) = e^{2\pi ir}$ and $\mb{e}_p(\sum_j c_j p^j) = e^{-2\pi i \sum_{j<0} c_j p^j}$.
\end{itemize}

The Hecke algebra $h_{n,n}(U_0(N)) \subset \End_\C(S_{n,n}(U_0(N))$ acts on the space $S_{n,n}(U_0(N))$, but we will not recall the precise formulas here. To every Hecke eigenform $F$, i.e., a cusp form which is an eigenfunction under all the Hecke operators $T_\mf{n}$, one can associate an algebra homomorphism $\lambda_F: h_{n,n}(U_0(N)) \to \C$ such that $T_\mf{n} F = \lambda_F(T_\mf{n}) F$. An eigenform $F$ is said to be normalized if $a(\O_K; F) = 1$ in the Fourier--Whittaker expansion (\ref{Fourier-adelic}).

\subsubsection{Classical automorphic forms}

Next we review how an adelic Bianchi modular form on $\GL_2(K)$ gives rise to a tuple of classical automorphic forms on the upper half-space
\[
\mc{H}_3 = \{x+iy: x \in \C, y \in \R_{>0}\},
\]
which can be identified with the symmetric space $\GL_2(\C) / (\C^\times \cdot \SU_2(\C))$.

Let $h$ be the class number of $K$, and fix a set of representatives $a_i \in \A_{K,f}^\times$ of the class group (with $a_1 = 1$). Then the strong approximation theorem for $\GL_2$ gives
\[
\GL_2(\A_K) = \coprod_{i=1}^h \GL_2(K) \cdot t_i U_0(N) \GL_2(\C),
\]
where $t_i = \begin{pmatrix} a_i & 0 \\ 0 & 1 \end{pmatrix}$. We set
\[
\Gamma_0^{K,i}(N) = \SL_2(K) \cap t_i U_0(N) \GL_2(\C) t_i^{-1} \subset \SL_2(K)
\]
and often abbreviate this as $\Gamma^i$. For $i=1$, note that $\Gamma_0^{K,1}(N) = \left\{ \begin{pmatrix} a & b \\ c & d \end{pmatrix} \in \GL_2(\O_K) : N \mid c \right\}$ is the congruence subgroup $\Gamma_0^K(N)$ of level $N$ in the classical sense.

The (adelic) Bianchi threefold of level $N$ is the locally symmetric space
\[
\widetilde{Y}_K(N) = \GL_2(K) \backslash \GL_2(\A_K) / (\C^\times \cdot \SU_2(\C) \cdot U_0(N)).
\]
It decomposes into connected components
\[
\widetilde{Y}_K(N) = \coprod_{i=1}^h Y_K^i(N),
\]
where
\begin{align*}
Y_K^i(N) &= \Gamma_0^{K,i}(N) \backslash \GL_2(\C) / (\C^\times \cdot \SU_2(\C)) \\
&= \Gamma_0^{K,i}(N) \backslash \mc{H}_3.
\end{align*}

Let $F \in S_{n,n}(U_0(N))$. For each $1 \leq i \leq h$, the function $F_i: \GL_2(\C) \to W_{2n+2}(\C)$ defined by
\[
F_i(g) = F(t_i g)
\]
descends to a classical automorphic form $\mc{H}_3 \to W_{2n+2}(\C)$ of weight $(n,n)$ and level $\Gamma_0^{K,i}$; this space is denoted $S_{n,n}(\Gamma_0^{K,i}(N))$. It is easily checked that the Fourier--Whittaker expansion (\ref{Fourier-adelic}) descends to
\begin{align*}
F_i(x+iy) = |a_i|_{\A_K} y \sum_{\alpha=0}^{2n+2} & \binom{2n+2}{\alpha} \\
& \cdot \left[ \sum_{\xi \in K^\times} a(\xi a_i \mf{d}) \left( \frac{\xi}{i|\xi|} \right)^{n+1-\alpha} K_{\alpha-n-1}(4 \pi y |\xi|) \mb{e}_K(\xi x) \right] S^{2n+2-\alpha} T^\alpha
\end{align*}
for $x+iy \in \mc{H}_3$. In this manner, every cuspidal automorphic form on $\GL_2(\A_K)$ determines an $h$-tuple of automorphic forms on $\mc{H}_3$.

Our focus will be on the component for $i=1$, so we will simplify notation as $Y_K(N) = Y_K^1(N)$ and call this the (classical) Bianchi threefold of level $N$. The classical automorphic form $F_1: \mc{H}_3 \to W_{2n+2}(\C)$ belongs to $S_{n,n}(\Gamma_0^K(N))$.

\subsubsection{Base change}

Now we state the basic properties of base-change lifts from $\GL_2(\Q)$ to $\GL_2(K)$. Details can be found in \cite{Jacquet-2} and \cite{Langlands-BC}. For a classical elliptic newform $f \in S_k(N, \psi)$, we denote its base-change adelic Bianchi form by $f_K$ or $\BC(f)$, preferring the latter notation when other subscripts are present. Throughout we impose:

\begin{hyp}
$f_K$ is cuspidal.
\end{hyp}

This is satisfied, for example, when $f$ is not a CM form with character $\alpha$ (the quadratic character associated with $K$). Then:
\begin{itemize}
\item $f_K$ is a cuspidal Bianchi newform of weight $(k-2,k-2)$, level $N$ and Nebentype $\chi = \psi \circ N_{K/\Q}$ (see the next remark).
\item The Hecke eigenvalues of $f_K$ can be described in terms of the eigenvalues $a_\ell$ of $f$: for every prime $\mf{l}$ of $K$ above $\ell$,
\[
a_\mf{l} = 
\begin{cases}
a_\ell &\text{if $\ell$ is split or ramified}, \\
a_\ell^2 - 2 \psi(\ell) \ell^{k-1} &\text{if $\ell$ is inert},
\end{cases}
\]
where $\psi(\ell)$ is understood as $0$ if $\ell$ divides the level $N$.
\end{itemize}

\begin{rem}
Our negligence to define the Nebentype of a Bianchi form is going to be harmless, as all base-change forms considered in this paper have trivial Nebentype and hence belong to $S_{k-2,k-2}(U_0(N))$.
\end{rem}

\subsection{Cohomology and the Eichler--Shimura--Harder isomorphism}

\label{section-cohomology}

The Eichler--Shimura--Harder isomorphism relates spaces of (cohomological) automorphic forms and cohomology groups of the associated locally symmetric spaces. For the purpose of introducing Hida's integral formula, it is enough to consider the classical version of this isomorphism, i.e., between classical Bianchi forms and cohomology of the classical Bianchi threefold $Y_K^1(N)$, rather than the adelic counterpart $\widetilde{Y}_K(N)$. This is sufficient because we will only deal with Bianchi modular forms arising from base change of elliptic modular forms.

\subsubsection{Cohomology of the Bianchi threefold}

For the rest of the paper, we impose:

\begin{hyp}
$Y_K(N)$ is smooth, and $\Gamma_0^K(N)/(\Gamma_0^K(N) \cap K^\times)$ is torsion-free.
\end{hyp}

These are satisfied for $N$ sufficiently large. 

If $M$ is any $\Gamma_0^K(N)$-module, we can consider the locally constant sheaf of continuous sections of 
\[
\Gamma_0^K(N) \backslash \left( \GL_2(\C) / (\C^\times \cdot \SU_2(\C)) \times M \right) \to \Gamma_0^K(N) \backslash \GL_2(\C) / (\C^\times \cdot \SU_2(\C)) = Y_K(N)
\]
where $M$ is equipped with the discrete topology, and hence the cohomology groups
\[
H^q(Y_K(N), M), \quad H_c^q(Y_K(N), M),
\]
as well as the cuspidal cohomology group
\[
H_\cusp^q(Y_K(N), M) := \mathrm{im} \left(H_c^q(Y_K(N), M) \to H^q(Y_K(N), M) \right).
\]
Under our hypotheses, $Y_K(N)$ is an Eilenberg--MacLane space for $\Gamma_0^K(N)$, so $H^q(Y_K(N), M)$ can be identified with group cohomology $H^q(\Gamma_0^K(N), M)$.

If, furthermore, $M$ has the structure of a module over the monoid
\[
\Delta_0^K(N) = \left\{ \begin{pmatrix} a & b \\ c & d \end{pmatrix} \in M_2(\O_K) \cap \GL_2(K): N \mid c, N \nmid d \right\},
\]
then the cohomology groups $H_*^q(Y_K(N), M)$ are equipped with actions of Hecke operators at principal ideals of $\O_K$; we will not recall the definition.

\begin{rem}
In general, Hecke operators at non-principal ideals must be defined adelically on the cohomology of $\widetilde{Y}_K(N)$. However, this will not matter for the rest of the paper, so we gloss over this point.
\end{rem}

\subsubsection{Cohomology of the modular curve}
The situation over $\Q$ is completely analogous. Consider the congruence subgroup
\[
\Gamma_0^\Q(N) = \left\{ \begin{pmatrix} a & b \\ c & d \end{pmatrix} \in \SL_2(\Z): N \mid c \right\} = \Gamma_0^K(N) \cap \SL_2(\Q)
\]
and the modular curve
\[
Y_\Q(N) = \Gamma_0^\Q(N) \backslash \mc{H}_2.
\]
Every $\Gamma_0^\Q(N)$-module $M$ gives rise to a local system on $Y_\Q(N)$, and hence cohomology groups $H_*^q(Y_\Q(N), M)$.

If $M$ is in addition a module over the monoid
\[
\Delta_0^\Q(N) = \left\{ \begin{pmatrix} a & b \\ c & d \end{pmatrix} \in M_2(\Z) \cap \GL_2(\Q): N \mid c, (d,N) = 1 \right\},
\]
then $H_*^q(Y_\Q(N), M)$ has an action of Hecke operators.

\subsubsection{Eichler--Shimura isomorphism}

For any commutative ring $R$, consider the space $V_n(R) = \Sym^n(R^2)$ of homogeneous polynomials in $(X,Y)$ with coefficients in $R$, equipped with a left action of $\GL_2(R)$ by
\[
(\gamma \cdot P)(X,Y) = P((X,Y) \cdot {}^t \gamma^\iota),
\]
where $\gamma^\iota = \det(\gamma) \gamma^{-1}$, i.e.,
\[
\begin{pmatrix} a & b \\ c & d \end{pmatrix} \cdot P(X,Y) = P(dX-bY, -cX+aY).
\]

For $(n_1, n_2) \in \Z_{\geq 0}^2$, define
\[
V_{n_1,n_2}(R) = \Sym^{n_1}(R^2) \otimes \Sym^{n_2}(R^2),
\]
which is likewise identified concretely as the space of polynomials in $(X_i, Y_i)_{i=1,2}$ with coefficients in $R$ homogeneous of degree $n$ for each pair $(X_\sigma, Y_\sigma)$. This space is equipped with a left action of $\GL_2(R)^2$ by the formulas above.

If $R$ is a subalgebra of $\C$, this restricts to an action of $\GL_2(R) \subset \GL_2(\C)^2$, embedding into the first component via identity and the second component via complex conjugation.

Applying this discussion to $M = V_{k,k}(R)$, viewed as a module over $\Gamma_0^K(N)$, we have the cohomology groups
\[
H^q(Y_K(N), V_{n,n}(R)), \quad H_c^q(Y_K(N), V_{n,n}(R)), \quad H_\cusp^q(Y_K(N), V_{n,n}(R)).
\]
They are equipped with actions of Hecke operators at principal ideals of $\O_K$.

We are now ready to state:

\begin{thm}[Eichler--Shimura, Harder]
For $q = 1, 2$, there is a Hecke-equivariant isomorphism
\[
\delta_q: S_{n,n}(\Gamma_0^K(N)) \overset{\sim}{\to} H_\cusp^q(Y_K(N), V_{n,n}(\C)).
\]
\end{thm}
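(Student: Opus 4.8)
The plan is to construct $\delta_q$ as the \emph{Eichler--Shimura map}, sending a cusp form to the class of an explicit harmonic differential form, and then to prove it is an isomorphism by combining Hodge theory on $Y_K(N)$ with the spectral decomposition of the cuspidal spectrum and an archimedean $(\mf{g}, K)$-cohomology computation. Concretely, from $F \in S_{n,n}(\Gamma_0^K(N))$, realized as a $W_{2n+2}(\C)$-valued function on $\mc{H}_3$, one builds a $\Gamma_0^K(N)$-invariant, $V_{n,n}(\C)$-valued harmonic $q$-form on $\mc{H}_3$ using the archimedean data (the Whittaker function together with the $\SU_2(\C)$-branching relating $W_{2n+2}(\C)$, $\wedge^\bullet \mf{p}$, and $V_{n,n}(\C)$); its de Rham class defines $\delta_q(F)$, and the construction is manifestly Hecke-equivariant because the Hecke action on both sides is induced by the same double-coset operators.

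First I would invoke Hodge theory in the cuspidal range. Under our standing hypotheses $Y_K(N)$ is a smooth Riemannian $3$-manifold, and the cuspidal spectrum is discrete, so $H_\cusp^q(Y_K(N), V_{n,n}(\C))$ is computed by square-integrable harmonic $V_{n,n}(\C)$-valued forms. This identifies the target with a space of automorphic forms and replaces the geometric problem by a representation-theoretic one. Next I would feed in the spectral decomposition of the cuspidal spectrum together with Borel's description of cuspidal cohomology via relative Lie algebra cohomology, giving
\[
H_\cusp^q(Y_K(N), V_{n,n}(\C)) \cong \bigoplus_\pi m(\pi)\, H^q(\mf{g}, K; \pi_\infty \otimes V_{n,n}(\C)) \otimes \pi_f^{U_0(N)},
\]
where $\pi = \pi_\infty \otimes \pi_f$ runs over the cuspidal automorphic representations of $\GL_2(\A_K)$ whose infinitesimal character matches the highest weight of $V_{n,n}(\C)$, and $m(\pi)$ is the multiplicity.

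The crux is then the archimedean computation. For $\GL_2(\C)$, with $\mf{g} = \mf{gl}_2(\C)$ viewed as a real Lie algebra and $K = \C^\times \cdot \SU_2(\C)$, the cohomological $\pi_\infty$ is a specific tempered principal series, and a direct calculation with its $\SU_2(\C)$-types against $\wedge^\bullet \mf{p}$ and $V_{n,n}(\C)$ shows that $H^q(\mf{g}, K; \pi_\infty \otimes V_{n,n}(\C))$ is one-dimensional for $q = 1, 2$ and vanishes for $q = 0, 3$ (the extremal vanishing reflecting cuspidality and temperedness). Since $\dim_\R \mc{H}_3 = 3$, the nonzero degrees are exactly $q = 1, 2$, as claimed.

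Finally, multiplicity one for $\GL_2$ gives $m(\pi) = 1$, and the correspondence $F \leftrightarrow \pi$ identifies $S_{n,n}(\Gamma_0^K(N))$ with $\bigoplus_\pi \pi_f^{U_0(N)}$; combined with the one-dimensionality of the archimedean cohomology this yields the isomorphism in each degree $q = 1, 2$, with Hecke-equivariance inherited from the Hecke-stability of the decomposition. I expect the main obstacle to be the $(\mf{g}, K)$-cohomology computation---pinning down the correct archimedean representation and carrying out the $\SU_2(\C)$-branching---for which I would follow Borel--Wallach and the explicit $\SL_2(\C)$ calculation recorded in \cite{H94-L} and \cite{U95}; a secondary bookkeeping point is the passage between the classical forms $S_{n,n}(\Gamma_0^K(N))$ and their adelic avatars already set up above.
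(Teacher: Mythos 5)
The paper offers no proof of this theorem: it is quoted as a classical result of Eichler--Shimura and Harder (with \cite{H94-L}, \cite{U95} among the references), and the only proof-adjacent content in the text is the explicit realization of $\delta_2$ as a harmonic differential form via Hida's recipe (Proposition \ref{explicit-ES}). Your sketch therefore supplies what the paper deliberately omits, and it is the standard and correct route: Borel's identification of cuspidal cohomology with $\bigoplus_\pi m(\pi)\, H^q(\mf{g},K;\pi_\infty \otimes V_{n,n}(\C)) \otimes \pi_f^{U_0(N)}$, the archimedean $(\mf{g},K)$-computation for $\GL_2(\C)$ giving one-dimensionality in degrees $q=1,2$ and vanishing in $q=0,3$ (consistent with $q_0=1$, $l_0=1$ for $\SL_2(\C)$, the tempered range being $[q_0, q_0+l_0]$), and multiplicity one to kill $m(\pi)$. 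Your explicit construction of the map is precisely the content the paper records in Proposition \ref{explicit-ES}, so the two presentations are complementary rather than in conflict.

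One point you should tighten: the identification $S_{n,n}(\Gamma_0^K(N)) \cong \bigoplus_\pi \pi_f^{U_0(N)}$ in your last step is the \emph{adelic} statement, i.e.\ it computes $S_{n,n}(U_0(N))$, which when the class number $h$ exceeds $1$ decomposes as $\bigoplus_{i=1}^h S_{n,n}(\Gamma_0^{K,i}(N))$ over the connected components of $\widetilde{Y}_K(N)$; the theorem as stated concerns only the $i=1$ component $Y_K(N) = Y_K^1(N)$. You flag this as ``bookkeeping,'' which is fair, but to close the argument you must observe that both the adelic space of forms and the adelic cuspidal cohomology decompose compatibly over components and then project to the first one; correspondingly, the Hecke equivariance you obtain classically is only for operators attached to principal ideals --- which, as the paper itself remarks, is all that is defined on $H_\cusp^q(Y_K(N), V_{n,n}(\C))$. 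With that restriction made explicit, your argument is complete at the level of a sketch.
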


We describe this isomorphism in the case $q=2$ explicitly, following \cite{H94-L}. With no risk of ambiguity, the map $\delta_2$ will often be denoted $\delta$.

Let $F \in S_{n,n}(\Gamma_0^K(N))$ be a (classical) Bianchi form, with Fourier--Whittaker expansion
\begin{align*}
F(x+iy) = y^{n+2} \sum_{\xi \in K^\times} & a(\xi y \mf{d}; F) \\
& \cdot \left[ \sum_{\alpha = 0}^{2n+2} \binom{2n+2}{\alpha} \left( \frac{\xi}{i|\xi|} \right)^{n+1-\alpha} K_{\alpha-n-1}(4\pi|\xi|y) S^{2n+2-\alpha} T^\alpha \right] e^{2\pi i(\xi x + \overline{\xi x})}
\end{align*}
for $x+iy \in \mc{H}_3$. Following Hida's recipe, we group the coefficients as
\[
F = \sum_{\alpha=0}^{2n+2} G_\alpha \binom{2n+2}{\alpha} S^{2n+2-\alpha} T^\alpha
\]
and expand the formal identity
\begin{align*}
& (X_1 V - Y_1 U)^n (X_2 U + Y_2 V)^n (AV - BU)^2 \\
=& \left( \sum_{j_1=0}^n (-1)^{j_1} \binom{n}{j_1} X_1^{n-j_1} Y_1^{j_1} V^{n-j_1} U^{j_1} \right) \cdot \left( \sum_{j_2=0}^n \binom{n}{j_2} X_2^{n-j_2} Y_2^{j_2} U^{n-j_2} V^{j_2} \right) \\
&\phantom{= \sum} \cdot (A^2V^2 - 2ABUV + B^2U^2) \\
=& \sum_{0 \leq \mb{j} \leq \mb{n}} (-1)^{j_1} \binom{\mb{n}}{\mb{j}} X^{\mb{n}-\mb{j}} Y^\mb{j} (U^{n+j_1-j_2} V^{n-j_1+j_2+2} A^2 \\
&\phantom{= \sum} - 2 U^{n+j_1-j_2+1} V^{n-j_1+j_2+1} AB + U^{n+j_1-j_2+2} V^{n-j_1+j_2} B^2),
\end{align*}
where the multi-index notation $\binom{\mb{n}}{\mb{j}} = \binom{n_1}{j_1} \binom{n_2}{j_2}$ and $X^\mb{a} = X_1^{a_1} X_2^{a_2}$ are used. Finally, we make the following substitutions:
\begin{itemize}
\item $U^\alpha V^{2n+2-\alpha}$ by $(-1)^{2n+2-\alpha} G_\alpha$,
\item $(A,B)$ by $(y^{-1/2} A, y^{-1/2} B)$,
\item $(A^2, AB, B^2)$ by $y^{-1} (dy \wedge dx, -2 dx \wedge d\overline{x}, dy \wedge d\overline{x})$.
\end{itemize}

\begin{prop}[\cite{H94-L}]
\label{explicit-ES}
The image of $F \in S_{n,n}(\Gamma_0^K(N))$ under the Eichler--Shimura isomorphism $\delta$ is the harmonic differential form
\begin{align*}
\delta(F) =& w \cdot \sum_{0 \leq \mb{j} \leq \mb{n}} (-1)^{n-j_2} \binom{\mb{n}}{\mb{j}} X^{\mb{n}-\mb{j}} Y^\mb{j} (G_{n+j_1-j_2} y^{-2} dy \wedge dx \\
&\phantom{= \sum} - G_{n+j_1-j_2+1} y^{-2} dx \wedge d\overline{x} + G_{n+j_1-j_2+2} y^{-2} dy \wedge d\overline{x}),
\end{align*}
on $\mc{H}_3$, where $w = \begin{pmatrix} y^{1/2} & x y^{-1/2} \\ 0 & y^{-1/2} \end{pmatrix} \in \SL_2(\C)$ acts on the variables $(X_1, Y_1, X_2, Y_2)$ by
\[
w \cdot (X_1, Y_1, X_2, Y_2) = (X_1, Y_1, X_2, Y_2) \begin{pmatrix} {}^t w^\iota & 0 \\ 0 & {}^t \overline{w}^\iota \end{pmatrix}.
\]
\end{prop}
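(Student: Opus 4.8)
The plan is to realize $\delta = \delta_2$ as the Eichler--Shimura--Harder map in its guise as the isomorphism between harmonic automorphic forms and harmonic differential forms on $\mc{H}_3$, and then to unwind the construction into the coordinates $(x,y)$. Write $G = \GL_2(\C)$ with maximal compact-modulo-center $K_\infty = \C^\times \cdot \SU_2(\C)$, and let $\mf{g} = \mf{k} \oplus \mf{p}$ be the corresponding Cartan decomposition, so that $\mf{p}$ is the real three-dimensional tangent space of $\mc{H}_3$ at the base point and $\SU_2(\C)$ acts on it through $\Sym^2$ of the standard representation. The degree-$2$ part of the map sends $F$ to the $V_{n,n}(\C)$-valued $2$-form obtained by contracting $F$ (valued in the minimal $K$-type $W_{2n+2}(\C) = \Sym^{2n+2}$) against a distinguished $\SU_2(\C)$-invariant element
\[
\eta \in \wedge^2 \mf{p}^* \otimes \Hom_\C(W_{2n+2}(\C), V_{n,n}(\C)),
\]
which represents the generator of the one-dimensional relative Lie algebra cohomology in degree $2$ attached to the archimedean component of $F$. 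First I would make the three factors explicit: the $2$-forms $y^{-1}\, dy \wedge dx$, $y^{-1}\, dx \wedge d\overline{x}$, $y^{-1}\, dy \wedge d\overline{x}$ furnish an $\SU_2(\C)$-stable basis of $\wedge^2 \mf{p}^* \cong \Sym^2$, which is precisely the content of the substitution of $(A^2, AB, B^2)$ by $y^{-1}(dy \wedge dx, -2\, dx \wedge d\overline{x}, dy \wedge d\overline{x})$.

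The heart of the matter is to identify $\eta$ and verify its invariance. The key representation-theoretic observation is that $\Sym^{2n}$ occurs with multiplicity one both in $W_{2n+2}(\C) \otimes \wedge^2\mf{p}^* \cong \Sym^{2n+2} \otimes \Sym^2$ and in the restriction $V_{n,n}(\C)|_{\SU_2(\C)} \cong \Sym^n \otimes \Sym^n$, so the space of invariants is one-dimensional; this matches the dimension of the cuspidal cohomology and pins down $\delta$ up to scalar. I would then check that the generating polynomial $(X_1 V - Y_1 U)^n (X_2 U + Y_2 V)^n (AV - BU)^2$ is exactly this invariant: each factor is assembled from the standard $\SU_2(\C)$-invariant symplectic pairing between the auxiliary variables $(U,V)$ tracking $W_{2n+2}(\C)$ and, respectively, $(X_1, Y_1)$, $(X_2, Y_2)$ tracking the two factors of $V_{n,n}(\C)$ (the sign discrepancy reflecting the complex conjugation in the second factor), and $(A,B)$ tracking $\wedge^2\mf{p}$. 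Its total degree $2n+2$ in $(U,V)$ confirms that contraction against $W_{2n+2}(\C)$ lands in $V_{n,n}(\C) \otimes \wedge^2 \mf{p}^*$ as required.

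With $\eta$ in hand, the explicit formula follows by unwinding the substitutions. Translating from the base point to $x+iy$ amounts to acting by $w = \begin{pmatrix} y^{1/2} & xy^{-1/2} \\ 0 & y^{-1/2} \end{pmatrix}$, which accounts both for the outer factor $w \cdot$ on the variables $(X_1,Y_1,X_2,Y_2)$ and for the rescaling $(A,B) \mapsto (y^{-1/2}A, y^{-1/2}B)$ that produces the $y^{-1}$ in front of the $2$-forms. Contracting $\eta$ against the Fourier--Whittaker coefficients, repackaged as $F = \sum_\alpha G_\alpha \binom{2n+2}{\alpha} S^{2n+2-\alpha} T^\alpha$, is implemented by $U^\alpha V^{2n+2-\alpha} \rightsquigarrow (-1)^{2n+2-\alpha} G_\alpha$, the sign being that of the duality pairing on $\Sym^{2n+2}$. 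Expanding the formal identity displayed before the proposition and collecting terms then yields the asserted expression for $\delta(F)$.

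It remains to confirm that the resulting form is genuinely harmonic and $\Gamma_0^K(N)$-invariant; this is where I expect the real work to lie. Equivariance under $\Gamma_0^K(N)$ is forced by the invariance of $\eta$ together with the defining transformation law of $F$, so the crux is harmonicity, i.e.\ closedness and co-closedness. I would reduce this to the harmonicity condition imposed on $F$, which at the level of the Fourier--Whittaker expansion becomes a system of recurrences and differential relations among the modified Bessel functions $K_{\alpha - n -1}(4\pi|\xi|y)$ appearing in $W$. Verifying that these Bessel identities make the explicit $2$-form closed and co-closed is the main computational obstacle; once it is dispatched, uniqueness of the harmonic representative identifies the form with the Eichler--Shimura--Harder class, and Hecke-equivariance is inherited from the adelic description.
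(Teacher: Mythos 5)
The paper itself offers no proof of this proposition: it is quoted from Hida \cite{H94-L}, and the recipe displayed just before the statement (the grouping $F = \sum_\alpha G_\alpha \binom{2n+2}{\alpha} S^{2n+2-\alpha} T^\alpha$, the formal identity, and the three substitutions) serves as the \emph{definition} of the right-hand side rather than as an argument. Measured against the cited source, your representation-theoretic skeleton is the correct one and is essentially Hida's: $\delta_2$ is induced by a generator of the one-dimensional space of $\SU_2(\C)$-invariants in $\wedge^2 \mf{p}^* \otimes \Hom_\C(W_{2n+2}(\C), V_{n,n}(\C))$, the dimension count being exactly your observation that $\Sym^{2n}$ is the unique common constituent of $\Sym^{2n+2} \otimes \Sym^2$ and $\Sym^n \otimes \Sym^n$, each with multiplicity one; the generating polynomial $(X_1 V - Y_1 U)^n (X_2 U + Y_2 V)^n (AV - BU)^2$ is the corresponding invariant, with the symmetric-looking pairing in the second factor correctly accounting for the complex conjugation there; and translation from the base point to $x+iy$ by $w$ produces both the outer action of $w$ on $(X_1,Y_1,X_2,Y_2)$ and the $y^{-1/2}$ rescaling of $(A,B)$. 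Your degree count in $(U,V)$ and the identification of $y^{-1}(dy \wedge dx, -2\,dx \wedge d\overline{x}, dy \wedge d\overline{x})$ with an $\SU_2(\C)$-stable basis of $\wedge^2 \mf{p}^* \cong \Sym^2$ are likewise right.

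Two gaps remain, one of which you flag yourself. First, harmonicity: you propose to verify closedness and co-closedness by hand from recurrences among the Bessel functions $K_{\alpha-n-1}$, and you explicitly leave this ``main computational obstacle'' undone, so as written the proposal does not establish the statement. Note, however, that this step is cheaper than you fear: the contraction of a cusp form satisfying the harmonicity condition in the definition of $S_{n,n}(U)$ (i.e., generating a cohomological archimedean representation) against an invariant element of $\wedge^2 \mf{p}^* \otimes \Hom_\C(W_{2n+2}(\C), V_{n,n}(\C))$ is automatically harmonic by Kuga's lemma, since the Laplacian on such forms is the Casimir shifted by the matching infinitesimal-character constant; the Bessel identities are needed only for a self-contained check, which is precisely what the specific combination of $K_{\alpha-n-1}$'s in the Whittaker function $W(s)$ is engineered to pass. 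Second, normalization: multiplicity one pins down $\delta$ only up to a scalar, so the precise signs in the displayed formula --- the substitution $U^\alpha V^{2n+2-\alpha} \mapsto (-1)^{2n+2-\alpha} G_\alpha$ and the factor $(-1)^{n-j_2}$ --- are not forced by invariance alone, and matching the stated expression requires actually carrying out the expansion you defer to ``collecting terms.'' Neither gap is a wrong turn --- both are completions of the route the cited reference takes --- but until they are filled you have an outline consistent with \cite{H94-L}, not a proof.
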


\subsection{\texorpdfstring{$L$}{L}-functions}

We introduce the adjoint $L$-function for classical eigenforms and the Asai $L$-function for Bianchi eigenforms, and state a relationship between them in the base-change situation. Further detail and motivic interpretation can be found in \cite{G99} and \cite{H99}.

Throughout, $\eta: (\Z/m\Z)^\times \to \C^\times$ will denote a primitive Dirichlet character. For $L$-functions with an Euler product expansion, $L_N(s, -)$ denotes the same $L$-function with Euler factors at primes dividing $N$ removed.

Let $f = \sum_{n=1}^\infty a_n(f) q^n \in S_k(N, \psi)$ be a normalized eigenform. For each prime $p \nmid N$, suppose the $p$-th Hecke polynomial $X^2 - a_p(f) X + \psi(p) p^{k-1}$ has roots $\alpha_p$ and $\beta_p$.

\begin{defn}[Adjoint $L$-function]
The (twisted) adjoint $L$-function of $f$ is
\[
L(s, \ad(f) \otimes \eta) := \prod_{p \nmid N} \left[ \left( 1 - \frac{\alpha_p}{\beta_p} \cdot \eta(p) p^{-s} \right) (1 - \eta(p) p^{-s}) \left( 1 - \frac{\beta_p}{\alpha_p} \cdot \eta(p) p^{-s} \right) \right]^{-1}.
\]
\end{defn}

It has meromorphic continuation to $s \in \C$, and satisfies a functional equation under $s \leftrightarrow 1-s$.

Now let $F \in S_{n,n}(U_1(N))$ be a normalized Bianchi eigenform with Nebentype $\chi: (\O_K/N\O_K)^\times \to \C^\times$. Denote by $\chi_\Q: (\Z/N\Z)^\times \to \C^\times$ the restriction of $\chi$. Recall the Fourier--Whittaker coefficients $a(\mf{n}; F)$ from (\ref{Fourier-adelic}).

\begin{defn}[Asai $L$-function]
The (twisted) Asai $L$-function of $F$ is
\[
L(s, \As(F) \otimes \eta) := L_{mN}(2s-2k-2, \eta^2 \chi_\Q) \cdot L_{K/\Q}(s-1, F, \eta),
\]
where
\[
L_{K,\Q}(s, F, \eta) := \sum_{n=1}^\infty \frac{\eta(n) a(n\O_K; F)}{n^{s+1}}.
\]
\end{defn}

\begin{rem}
Again, we will only be concerned with Bianchi forms with trivial Nebentype.
\end{rem}

The Asai $L$-function is absolutely convergent for $\mathrm{Re}(s)$ sufficiently large, and has meromorphic continuation to $s \in \C$.

\begin{lem}
\label{factorization}
If $F = f_K$ is the base-change of $f \in S_k(N, \psi)$, then there is a factorization
\[
L(s+k-1, \As(F) \otimes \eta) = L(s, \alpha \psi \eta) L(s, \ad(f) \otimes \psi \eta)
\]
up to Euler factors at primes dividing $mN$.
\end{lem}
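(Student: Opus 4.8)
The plan is to prove the factorization by comparing Euler factors at each rational prime $p \nmid mN$. Both sides are Euler products over such $p$: the right-hand side by definition, and the left-hand side because $L_{mN}(\,\cdot\,, \eta^2\chi_\Q)$ is a Dirichlet $L$-function while the Dirichlet series $L_{K/\Q}(\,\cdot\,, F, \eta)$ has an Euler product by multiplicativity of $\mf{n} \mapsto a(\mf{n}; F)$. Fix such a $p$ and let $\alpha_p, \beta_p$ be the Satake parameters of $f$, so that $\alpha_p + \beta_p = a_p$ and $\alpha_p\beta_p = \psi(p)p^{k-1}$. After the overall shift $s \mapsto s+k-1$ the natural variable at $p$ becomes $u := \psi(p)\eta(p)p^{-s}$, and the identity $\alpha_p\beta_p \cdot \eta(p)p^{-(s+k-1)} = u$ is exactly what converts the arithmetic normalization of the Fourier coefficients into the unitary normalization of the adjoint factor. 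In these terms the right-hand local factor is $[(1-\alpha(p)u)(1-\tfrac{\alpha_p}{\beta_p}u)(1-u)(1-\tfrac{\beta_p}{\alpha_p}u)]^{-1}$, and the goal is to match the left-hand local factor to this.

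Next I would compute the local factor of $L_{K/\Q}(\,\cdot\,, F, \eta)$ in the three cases governed by the splitting of $p$ in $K$, feeding in the base-change eigenvalue formula together with the local Hecke recursion $\sum_{e} a(\mf{p}^e) t^e = (1 - a_\mf{p} t + \chi(\mf{p})(\mathrm{N}\mf{p})^{k-1} t^2)^{-1}$, where $t = \eta(p)p^{-(s+k-1)}$. When $p = \mf{p}\overline{\mf{p}}$ is split, $a(p^e\O_K) = a(\mf{p}^e) a(\overline{\mf{p}}^e)$ and both factors carry the Satake data $\{\alpha_p, \beta_p\}$ (since $a_\mf{p} = a_{\overline{\mf{p}}} = a_p$), so the local factor is the Rankin--Selberg diagonal convolution, for which the standard generating-function identity produces a degree-four denominator $(1-\alpha_p^2 t)(1-\alpha_p\beta_p t)^2(1-\beta_p^2 t)$ together with a numerator $1 - (\alpha_p\beta_p)^2 t^2$. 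When $p$ is inert, $a_\mf{p} = \alpha_p^2 + \beta_p^2$ and $\mathrm{N}\mf{p} = p^2$, so the quadratic factors as $(1-\alpha_p^2 t)(1-\beta_p^2 t)$ and only the powers $p^e\O_K = \mf{p}^e$ contribute. When $p$ is ramified, $p\O_K = \mf{p}^2$, so only the even powers $a(\mf{p}^{2e})$ occur, and summing $\sum_e a(\mf{p}^{2e}) z^e$ yields $(1 + \alpha_p\beta_p z)/[(1-\alpha_p^2 z)(1-\beta_p^2 z)]$.

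Then I would rewrite each of these in the variable $u$ using $\alpha_p\beta_p = \psi(p)p^{k-1}$, so that $\alpha_p^2 t = \tfrac{\alpha_p}{\beta_p}u$, $\alpha_p\beta_p t = u$, and $(\alpha_p\beta_p)^2 t^2 = u^2$, and finally fold in the normalizing factor. The decisive observation is that $L_{mN}(\,\cdot\,, \eta^2\chi_\Q)$, given $\chi_\Q = \psi^2$, contributes (once its exponent is matched to the shift) the Euler factor $(1-u^2)^{-1}$ at each good $p$: in the split case this cancels the numerator $1 - u^2$ thrown off by the convolution, while in the inert and ramified cases it supplies the degree-one factor that is otherwise missing. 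Inserting the values $\alpha(p) = +1, -1, 0$ for $p$ split, inert, and ramified respectively --- note that the vanishing of $\alpha$ at ramified primes is precisely what removes the spurious Euler factor there --- each of the three cases collapses to $[(1-\alpha(p)u)(1-\tfrac{\alpha_p}{\beta_p}u)(1-u)(1-\tfrac{\beta_p}{\alpha_p}u)]^{-1}$, which is exactly the right-hand local factor $L_p(s, \alpha\psi\eta) L_p(s, \ad(f) \otimes \psi\eta)$.

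The main obstacle is entirely one of normalization bookkeeping: one must track both weight shifts (the $s-1$ internal to $L_{K/\Q}$ and the overall $s \mapsto s+k-1$) against the exponent of the normalizing factor, so that the imprimitive zeta factor emitted by the split-prime Rankin--Selberg convolution is cancelled precisely by $L_{mN}(\,\cdot\,, \eta^2\chi_\Q)$. The only genuinely nontrivial algebraic input is the degree-four convolution identity at split primes; the inert and ramified computations are elementary geometric-series manipulations. Since everything takes place away from $mN$, ramification of $f$ and of $\eta$ never interferes, and at primes ramified in $K$ but not dividing $mN$ the ramified computation above applies verbatim.
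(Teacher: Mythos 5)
Your proposal is correct and follows essentially the same route as the paper, which proves the lemma precisely by comparing Euler factors at primes $p \nmid mN$ (citing \cite{G99} for the Euler product of the Asai $L$-function); your split/inert/ramified case analysis, including the degree-four Rankin convolution identity at split primes and the cancellation of the factor $1-u^2$ against the normalizing term $L_{mN}(2s,(\psi\eta)^2)$, checks out in every case. You simply supply in full the local computations that the paper's two-line sketch delegates to the reference.
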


\begin{proof}[Proof (sketch)]
This follows by comparing the Euler factors on both sides. The Euler product expansion of the Asai $L$-function can be found in \cite{G99}.
\end{proof}

\subsection{Special value of the twisted adjoint \texorpdfstring{$L$}{L}-function}

This subsection is an exposition of Hida's integral expression for the special value $L(1, \ad(f) \otimes \alpha)$. For clarity, we only set up the calculation for a classical eigenform with Nebentype $\alpha$, so that the base-change Bianchi form has trivial Nebentype.

More precisely, let $f \in S_{n+2}(N, \alpha)$ be a normalized eigenform with Nebentype $\alpha$. In particular, this forces $n$ to be odd and $N$ to be divisible by $D$. The base-change adelic Bianchi eigenform $F = f_K$ belongs to $S_{n,n}(U_0(N))$, and projects to the classical Bianchi eigenform $F_1 \in S_{n,n}(\Gamma_0^K(N))$, which is a function on $\mc{H}_3$.

To furthur simplify notation, we denote the (classical) Bianchi threefold and modular curve of level $N$ as
\begin{align*}
Y_K &:= Y_K(N) = \Gamma_0^K(N) \backslash \mc{H}_3, \\
Y_\Q &:= Y_\Q(N) = \Gamma_0^\Q(N) \backslash \mc{H}_2;
\end{align*}
this will not cause any confusion as $N$ will not change. The natural embedding $\mc{H}_2 \hookrightarrow \mc{H}_3$ induces $Y_\Q \hookrightarrow Y_K$. For suitable local systems $M$, we will consider the cohomology groups $H_*^q(Y_K, M)$ and $H_*^q(Y_\Q, M)$.

\subsubsection{Restriction to $\Q$}

Recall the Eichler--Shimura isomorphism
\[
\delta: S_{n,n}(\Gamma_0^K(N)) \overset{\sim}{\to} H_\cusp^2(Y_K, V_{n,n}(\C))
\]
and consider the differential form $\delta(F_1)$ on $\mc{H}_3$, for which Proposition \ref{explicit-ES} gives an explicit formula.

Restriction to the upper half-plane $\mc{H}_2 \subset \mc{H}_3$ amounts to setting $x = \overline{x}$, so we obtain
\[
\delta(F_1)|_\Q = (-1)^n w \cdot \sum_{0 \leq \mb{j} \leq \mb{n}} (-1)^{j_2} \binom{\mb{n}}{\mb{j}} X^{\mb{n}-\mb{j}} Y^\mb{j} (G_{n+j_1-j_2} + G_{n+j_1-j_2+2}) y^{-2} dy \wedge dx
\]
as a differential form on $\mc{H}_2$ with values in $V_{n,n}(\C)$.

\subsubsection{Projection onto $1$-dimensional subrepresentation}

While $V_{n,n}(\C)$ is an irreducible representation of $\SL_2(\O_K)$, it is no longer irreducible as a module over $\SL_2(\Z)$. The following decomposition is well-known.

\begin{lem}[Clebsch--Gordan decomposition]
Let $R$ be any $\Z[\frac{1}{n}]$-algebra. There is an isomorphism
\[
V_{n,n}(R) \cong \bigoplus_{i=0}^n V_{2n-2i}(R)
\]
of modules over $\SL_2(\Z)$, given by $\oplus (i!)^{-2} \nabla^i$, where $\nabla$ is the differential operator
\[
\nabla = \frac{\partial^2}{\partial X_2 \partial Y_1} - \frac{\partial^2}{\partial X_1 \partial Y_2}.
\]
\end{lem}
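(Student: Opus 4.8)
The plan is to reduce the statement to the classical Clebsch--Gordan decomposition for $\SL_2$ and then upgrade it to an integral, explicitly intertwined form. The key preliminary observation is that complex conjugation acts trivially on $\SL_2(\Z)$, so the two tensor factors of $V_{n,n}(R) = \Sym^n(R^2) \otimes \Sym^n(R^2)$ carry the \emph{same} $\SL_2(\Z)$-action; thus $V_{n,n}(R)$ is just $\Sym^n \otimes \Sym^n$ with the diagonal action, and we are decomposing a tensor square. I would package the intertwiners as the composites $\phi_i := (i!)^{-2}\, r \circ \nabla^i$, where $\nabla^i \colon V_{n,n}(R) \to V_{n-i,n-i}(R)$ lowers each bidegree by $i$ and $r \colon V_{m,m}(R) \to V_{2m}(R)$ is the diagonal restriction $X_1 = X_2 = X$, $Y_1 = Y_2 = Y$.

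First I would verify that $\nabla$ is $\SL_2(\Z)$-equivariant. Writing the action as a common linear substitution of the variables $(X_j, Y_j)$ by ${}^t\gamma^\iota$, the chain rule carries $\nabla$ to $\det({}^t\gamma^\iota)\cdot\nabla = \det(\gamma)\cdot \nabla$, which equals $\nabla$ precisely because $\gamma \in \SL_2(\Z)$; this is exactly why equivariance holds for $\SL_2$ but fails (up to a determinant twist) for $\GL_2$. The diagonal restriction $r$ is equivariant for the same reason, so each $\phi_i$ is an $\SL_2(\Z)$-map.

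The structural heart is the $\mf{sl}_2$-triple generated by $\nabla$ and multiplication by the invariant $\Delta := X_1 Y_2 - X_2 Y_1$. A direct Leibniz computation gives
\[
\nabla(\Delta P) = \Delta\,\nabla P - (E_1 + E_2 + 2)P,
\]
where $E_j$ is the Euler operator of the $j$-th pair; iterating this on a $\nabla$-harmonic $h$ of bidegree $(m,m)$ yields $\nabla^i(\Delta^i h) = (-1)^i\, i!\, \tfrac{(2m+i+1)!}{(2m+1)!}\, h$. Because $\Delta$ is $\SL_2(\Z)$-invariant and $r(\Delta) = 0$, this produces the harmonic decomposition $V_{n,n}(R) = \bigoplus_{i=0}^n \Delta^i\, \mc{H}_{n-i}$ with $\mc{H}_m := \ker\nabla \cap V_{m,m} \cong V_{2m}$ via $r$, and shows that $\phi_i$ annihilates every summand except $\Delta^i\mc{H}_{n-i}$, on which (after cancelling $(i!)^{-2}$) it acts as the integer scalar $(-1)^i\binom{2n-i+1}{i}$ times the isomorphism $r|_{\mc{H}_{n-i}}$. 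Hence $\bigoplus_i \phi_i$ is block-diagonal for these decompositions, and over $\Q$ it is an isomorphism by the dimension count $\sum_{i=0}^n (2n-2i+1) = (n+1)^2 = \dim V_{n,n}$ together with the non-vanishing of the scalars.

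The main obstacle is integrality: one must show $\bigoplus_i \phi_i$ is an isomorphism of $R$-modules, not merely after inverting all denominators. I would attack this by computing the matrix of $\phi_i$ in the monomial bases, where the falling-factorial bookkeeping reveals that the normalization $(i!)^{-2}$ is chosen precisely to keep the entries—and the entries of an inverse assembled from multiplication by $\Delta^i$ and the polarization $V_{2m} \hookrightarrow V_{m,m}$—as controlled as possible; concretely this reduces to checking that the block scalars $\binom{2n-i+1}{i}$, and the comparison of the integral lattices $\mc{H}_{n-i}$ and $V_{2n-2i}$ under $r$, are units in $R$. Tracking exactly which primes enter through these factorials and binomial coefficients (and hence which must be inverted) is the only delicate point, and is where the $\Z[\tfrac1n]$-hypothesis is consumed; everything else is a formal consequence of the $\mf{sl}_2$-structure above.
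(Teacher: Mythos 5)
The paper does not actually prove this lemma --- it is quoted as ``well-known'' with no argument given --- so there is no in-paper proof to compare against, and your proposal must be judged on its own terms. The rational part of your argument is correct and is the standard route (the Cayley process / $\mf{sl}_2$-structure): the determinant computation showing $\SL_2(\Z)$-equivariance of $\nabla$ and of the diagonal restriction $r$, the commutation rule $\nabla(\Delta P) = \Delta\,\nabla P - (E_1+E_2+2)P$, the iterate $\nabla^i(\Delta^i h) = (-1)^i\, i!\, \tfrac{(2m+i+1)!}{(2m+1)!}\, h$, the resulting block scalar $(-1)^i\binom{2n-i+1}{i}$ (your phrase ``after cancelling $(i!)^{-2}$'' is loose --- only one factor of $i!$ cancels --- but the scalar is right), and the dimension count all check out. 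Over any $\Q$-algebra this is a complete proof.

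The genuine gap is the integrality step, which you defer, and here the situation is worse than ``delicate'': your proposed reduction is the wrong criterion, and the statement as literally given is false over $\Z[\frac1n]$. The harmonic decomposition $\bigoplus_i \Delta^i\mc{H}_{n-i}$ is not an integral decomposition (already $\mc{H}_1$ is spanned by $X_1X_2$, $X_1Y_2+Y_1X_2$, $Y_1Y_2$, and $r$ sends the middle vector to $2XY$, so $r(\mc{H}_m)$ has nontrivial index in $V_{2m}$), so unit-ness of the block scalars relative to that decomposition is neither necessary nor sufficient for the monomial lattices to match; and those scalars are in any case typically non-units in $\Z[\frac1n]$ (e.g.\ $\binom{6}{1}=6$ for $n=3$). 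Decisively: for $n=1$ and $R=\Z$ (which is a $\Z[\frac11]$-algebra) the map sends $X_1X_2, X_1Y_2, Y_1X_2, Y_1Y_2$ to $(X^2,0)$, $(XY,-1)$, $(XY,1)$, $(Y^2,0)$, with determinant $\pm\det\left(\begin{smallmatrix}1&1\\-1&1\end{smallmatrix}\right)=\pm2$, so $(XY,0)$ is not in the image; and for $n=4$, $R=\Z[\frac14]=\Z[\frac12]$, the component $(4!)^{-2}\nabla^4$ takes the value $\pm\binom{4}{2}^{-1}=\pm\frac16$ on $X_1^2Y_1^2X_2^2Y_2^2$ (exactly the computation the paper performs in Section \ref{3}), so the map does not even preserve the integral lattices. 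An honest completion of your matrix computation would therefore not ``consume'' the $\Z[\frac1n]$-hypothesis but refute it: the lemma needs all primes up to roughly $2n+1$ inverted (e.g.\ $R$ a $\Z[\frac{1}{(2n+1)!}]$-algebra, or a $\Q$-algebra), which is consistent with the paper's own later caveats that the projection $\pi$ carries factorial denominators, remedied there by assuming $p>n$ or ordinarity (Proposition \ref{alg-part}) rather than by this lemma's stated hypothesis.
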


This decomposition induces
\[
H_\cusp^2(Y_\Q, V_{n,n}(\C)) \cong \bigoplus_{i=0}^n H_\cusp^2(Y_\Q, V_{2n-2i}(\C)).
\]
The component that is relevant for the special value $L(1, \ad(f) \otimes \alpha)$ turns out to be $V_0(\C) = \C$, the trivial coefficients. Denoting this projection as
\[
\pi: H_\cusp^2(Y_\Q, V_{n,n}(\C)) \to H_\cusp^2(Y_\Q, \C),
\]
we can easily check that
\[
\pi(\delta(F_1)|_\Q) = \sum_{j=0}^n (G_{2j} + G_{2j+2}) y^{-2} dy \wedge dx.
\]

\subsubsection{A Rankin--Selberg integral}

The special value formula is proved using Rankin--Selberg convolution with a suitable Eisenstein series, which we introduce now.

For any Dirichlet character $\psi: (\Z/N\Z)^\times \to \C^\times$ (not necessarily primitive), define the Eisenstein series
\[
E(z; s, \psi) := L_N(2s, \psi^{-1}) y^s \sum_{\gamma \in U \backslash \Gamma_0(N)} \psi(\gamma) |j(\gamma, z)|^{-2s},
\]
where $U = \left\{ \pm \begin{pmatrix} 1 & m \\ 0 & 1 \end{pmatrix}: m \in \Z \right\}$. We record its properties as follows:
\begin{itemize}
\item By a standard group-theoretic argument, we have
\[
E(z; s, \psi) = \frac{1}{2} y^s \sum_{\substack{(m,n) \in \Z^2 \\ (m,n) \neq (0,0)}} \psi(n) |mNz + n|^{-2s}.
\]
\item $E(z; s, \psi)$ has meromorphic continuation for all $s \in \C$, with Fourier expansion
\[
E \left( -\frac{1}{Nz}; s, \psi \right) = 2^{1-2s} N^{-s} (2\pi y)^{1-s} \frac{\Gamma(2s-1)}{\Gamma(s)^2} L_N(2s-1, \psi) + \text{(entire function of $s$)}.
\]
If $\psi$ is non-trivial, then $E(z; s, \psi)$ is holomorphic at $s = 1$; otherwise, it has a simple pole with residue $2^{-1} \pi N^{-2} \phi(N)$ (note that the extra factor of $N^{-1} \phi(N)$ reflects the absence from $L_N(s, \psi)$ of Euler factors at $N$).
\end{itemize}

Let us denote by $\id_N: (\Z/N\Z)^\times \to \C^\times$ the trivial Dirichlet character mod $N$.

\begin{prop}
\begin{align}
\label{RS-untwisted}
&\int_{\Gamma_0(N) \backslash \mc{H}_2} E(z; s, \id_N) \cdot \pi(\delta(F_1)|_\Q) \nonumber \\
=& \frac{(1 + (-1)^{n+1}) 2^{n+s-1}}{(4 \pi D^{-1/2})^{n+s+1}} \frac{\Gamma(\frac{s}{2})^2 \Gamma(s+n+1)}{\Gamma(s)} L(s+n+1, \As(F))
\end{align}
where $-D$ is the discriminant of $K$, and $L(s, \As(F))$ is the Asai $L$-function of $F$.
\end{prop}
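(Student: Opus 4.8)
The plan is to evaluate the left-hand side by the classical Rankin--Selberg unfolding method. Writing $\mathrm{Im}(\gamma z)^s = y^s |j(\gamma,z)|^{-2s}$, the Eisenstein series is, up to the prefactor $L_N(2s,\id_N)$, a sum over $U\backslash\Gamma_0(N)$ of translates of $y^s$. Since the $2$-form $\pi(\delta(F_1)|_\Q) = \big(\sum_{j=0}^n (G_{2j}+G_{2j+2})\big)\, y^{-2}\, dy\wedge dx$ represents a cohomology class and is therefore $\Gamma_0(N)$-invariant, I would first unfold the integral over $\Gamma_0(N)\backslash\mc{H}_2$ to an integral over the strip $U\backslash\mc{H}_2 = \{0\le x<1,\ y>0\}$, obtaining
\[
L_N(2s,\id_N)\int_0^\infty\!\!\int_0^1 y^s \Big(\sum_{j=0}^n (G_{2j}+G_{2j+2})\Big)\, \frac{dx\, dy}{y^2}.
\]

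Next I would carry out the $x$-integral. On $\mc{H}_2$ we have $x=\overline{x}$, so in the Fourier--Whittaker expansion each exponential becomes $e^{2\pi i\,\tr_{K/\Q}(\xi)\,x}$, and $\int_0^1 dx$ annihilates every term with $\tr_{K/\Q}(\xi)\neq 0$. For $\xi$ with $\xi\mf{d}$ integral, the condition $\tr_{K/\Q}(\xi)=0$ is precisely $\xi\mf{d}=(n)$ for some $n\in\Z$, so the surviving Fourier coefficients are exactly the $a(n\O_K;F)$ entering the Asai $L$-function, with $|\xi|=|n|/\sqrt{D}$ (this is the source of the $D^{-1/2}$ in the constant). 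On these trace-zero terms the phase factors $(\xi/i|\xi|)^{n+1-\alpha}$ collapse to signs $\mathrm{sgn}(n)^{n+1-\alpha}$, and pairing the contributions of $\xi$ and $-\xi$ produces the parity factor $1+(-1)^{n+1}$, which vanishes unless $n$ is odd -- consistent with the Nebentype-$\alpha$ hypothesis imposed earlier.

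The remaining $y$-integral is a single Mellin transform of a Bessel function. Using
\[
\int_0^\infty y^{w-1}K_\mu(ay)\,dy = 2^{w-2}a^{-w}\,\Gamma\!\Big(\tfrac{w+\mu}{2}\Big)\Gamma\!\Big(\tfrac{w-\mu}{2}\Big),
\]
each $G_\alpha$ (of Bessel index $\mu=\alpha-n-1$) contributes, with $w=s+n+1$, a factor $2^{s+n-1}(4\pi|n|/\sqrt{D})^{-(s+n+1)}\Gamma(\tfrac{s+\alpha}{2})\Gamma(\tfrac{s+2n+2-\alpha}{2})$; note that $w=s+n+1$ is what automatically shifts the argument of the $L$-function to $s+n+1$. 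Summing over $n$ assembles the Dirichlet series $\sum_n a(n\O_K;F)|n|^{-(s+n+1)}$, which together with the prefactor $L_N(2s,\id_N)$ recombines, Euler factor by Euler factor, into $L(s+n+1,\As(F))$.

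The main obstacle is the last step: I expect the weighted sum over the even indices $\alpha$ dictated by $\sum_{j}(G_{2j}+G_{2j+2})$, together with the index-dependent Gamma factors from the Bessel Mellin transforms, to telescope -- via a binomial identity flowing from the Clebsch--Gordan projection -- into the single clean product $\Gamma(\tfrac{s}{2})^2\Gamma(s+n+1)/\Gamma(s)$. Verifying this collapse, and then pinning down all the elementary constants (the powers of $2$, $\pi$ and $\sqrt{D}$, and the normalization of the invariant measure and of the additive character $\mb{e}_K$) without error, is the most delicate part of the computation; the unfolding and the trace-zero extraction are routine by comparison.
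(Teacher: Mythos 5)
Your proposal follows the same Rankin--Selberg unfolding route as the paper's own (sketched) proof: unfold against $E(z;s,\id_N)$, extract the trace-zero diagonal Fourier--Whittaker coefficients $a(n\O_K;F)$ via the $x$-integral, evaluate the $y$-integral as a Mellin transform of Bessel functions at $w=s+n+1$, and recombine with the prefactor $L_N(2s,\id_N)$ using the definition of the Asai $L$-function. The Gamma-factor collapse you flag as the delicate point is precisely what the paper also leaves implicit in its step (1) (``a direct calculation involving properties of Bessel functions''), so your outline matches the paper's argument essentially step for step, including the source of the parity factor $1+(-1)^{n+1}$ and of the power of $D^{-1/2}$.
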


\begin{proof}[Proof (sketch)]
This follows from the Rankin--Selberg method. As an outline:
\begin{enumerate}
\item A direct calculation involving properties of Bessel functions shows that
\[
\int_{U \backslash \mc{H}_2} y^s \cdot \pi(\delta(F_1)|_\Q) = \frac{(1 + (-1)^{n+1}) 2^{n+s-1}}{(4 \pi D^{-1/2})^{n+s+1}} \frac{\Gamma(\frac{s}{2})^2 \Gamma(s+n+1)}{\Gamma(s)} L_{K/\Q}(s+n, F).
\]

\item Then unfolding turns the left-hand side into
\begin{align*}
\int_{U \backslash \mc{H}_2} y^s \cdot \pi(\delta(F_1)|_\Q) &= \int_{\Gamma_0(N) \backslash \mc{H}_2} y^s \sum_{\gamma \in \U \backslash \Gamma_0(N)} \id_N(\gamma) |j(\gamma, z)|^{-2s} \cdot \pi(\delta(F_1)|_\Q) \\
&= L(2s, \id_N)^{-1} \int_{\Gamma_0(N) \backslash \mc{H}_2} E(z; s, \id_N) \cdot \pi(\delta(F_1)|_\Q).
\end{align*}

\item Finally, the definition of the Asai $L$-function gives
\[
L(s+n+1, \As(F)) = L(2s, \id_N) L_{K/\Q}(s+n, F). \qedhere
\]
\end{enumerate}
\end{proof}

Now we are ready to state Hida's integral formula \cite{H99} in our setting.

\begin{thm}[Hida]
Suppose $f \in S_{n+2}(N, \alpha)$ is a normalized eigenform, with base-change Bianchi form $F \in S_{n,n}(U_0(N))$. Then
\begin{equation}
\label{Hida-integral}
L_N(1, \ad(f) \otimes \alpha)
= \frac{(2 \pi D^{-1/2})^{n+2} \phi(N)}{N^2 (n+1)!} \int_{\Gamma_0(N) \backslash \mc{H}_2} \pi(\delta(F_1)|_\Q).
\end{equation}
\end{thm}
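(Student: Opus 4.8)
The plan is to extract the special value $L_N(1, \ad(f)\otimes\alpha)$ by taking residues at $s=1$ on both sides of the Rankin--Selberg identity \eqref{RS-untwisted}. The key observation is that the left-hand side acquires a simple pole at $s=1$ solely from the Eisenstein series $E(z;s,\id_N)$, whose residue is the \emph{constant} $2^{-1}\pi N^{-2}\phi(N)$, while the right-hand side has a simple pole at $s=1$ coming from the Asai $L$-function. Matching the two residues will produce the formula.

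First I would treat the left-hand side. Since $\Res_{s=1} E(z;s,\id_N) = 2^{-1}\pi N^{-2}\phi(N)$ is independent of $z$, and since $\pi(\delta(F_1)|_\Q)$ is a rapidly decaying cuspidal form so that the integral converges and its meromorphic continuation commutes with integration, I obtain
\[
\Res_{s=1}\int_{\Gamma_0(N)\backslash\mc{H}_2} E(z;s,\id_N)\cdot\pi(\delta(F_1)|_\Q) = \frac{\pi\phi(N)}{2N^2}\int_{\Gamma_0(N)\backslash\mc{H}_2}\pi(\delta(F_1)|_\Q).
\]

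Next I would treat the right-hand side, writing it as $C(s)\cdot L(s+n+1, \As(F))$ with
\[
C(s) = \frac{(1+(-1)^{n+1})2^{n+s-1}}{(4\pi D^{-1/2})^{n+s+1}}\cdot\frac{\Gamma(\tfrac{s}{2})^2\,\Gamma(s+n+1)}{\Gamma(s)},
\]
which is holomorphic and non-vanishing at $s=1$. Since $n$ is odd we have $1+(-1)^{n+1}=2$, and evaluating with $\Gamma(\tfrac12)^2=\pi$, $\Gamma(n+2)=(n+1)!$ and $\Gamma(1)=1$ gives $C(1) = 2^{n+1}\,\pi\,(n+1)!\,/\,(4\pi D^{-1/2})^{n+2}$. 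The pole is then supplied by $L(s+n+1, \As(F))$: applying Lemma \ref{factorization} with $k=n+2$, $\psi=\alpha$ and $\eta=\id$ exhibits a Dedekind-zeta-type factor $L(s,\alpha^2)$ carrying a simple pole at $s=1$ whose regular cofactor is $L(s,\ad(f)\otimes\alpha)$, so that $\Res_{s=1}L(s+n+1,\As(F)) = L_N(1,\ad(f)\otimes\alpha)$ once the Euler factors at $N$ are accounted for.

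Equating the two residues and solving yields
\[
L_N(1,\ad(f)\otimes\alpha) = \frac{\pi\phi(N)}{2N^2\,C(1)}\int_{\Gamma_0(N)\backslash\mc{H}_2}\pi(\delta(F_1)|_\Q),
\]
and substituting $C(1)$ together with the simplification $(4\pi D^{-1/2})^{n+2}/2^{n+2} = (2\pi D^{-1/2})^{n+2}$ gives exactly \eqref{Hida-integral}. The main obstacle is not the Rankin--Selberg computation, which is already packaged in \eqref{RS-untwisted}, but rather the two analytic points flagged above: justifying that taking the residue commutes with integration over $\Gamma_0(N)\backslash\mc{H}_2$, and the careful bookkeeping of Euler factors at primes dividing $N$ through the factorization (consistent with the normalization of $E(z;s,\id_N)$, whose definition already incorporates $L_N(2s,\id_N)$) so that the residue of the Asai $L$-function lands precisely on $L_N(1,\ad(f)\otimes\alpha)$ and the constant comes out as stated.
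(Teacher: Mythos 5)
Your proposal is correct and follows essentially the same route as the paper's own proof: taking residues at $s=1$ on both sides of \eqref{RS-untwisted}, using Lemma \ref{factorization} (with $\psi=\alpha$, $\eta=\id$, so the pole comes from the Dedekind-zeta-type factor), and removing the Euler factors at $N$. Your explicit evaluation of the archimedean constant $C(1)$ and your attention to interchanging the residue with the integral merely make explicit steps the paper dispatches with ``follows immediately.''
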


\begin{proof}
This follows immediately by taking residues at $s=1$ on both sides of (\ref{RS-untwisted}) and using the factorization from Lemma \ref{factorization}:
\[
L(s+n+1, \As(F)) = L(s, \id_N) L(s, \ad(f) \otimes \alpha)
\]
which is an equality up to finitely many Euler factors at $N$. Removing those Euler factors gives the desired identity.
\end{proof}

For general eigenforms $f \in S_k(N, \psi)$ with Nebentype $\psi$, $L(1, \ad(f) \otimes \alpha)$ can be obtained by twisting $f_K$ by a suitable Hecke character $\varphi$ on $K$. Since such a twist significantly complicates the notation, the construction of the $p$-adic $L$-function in the general case will appear elsewhere.

\subsection{Algebraicity and cohomological interpretation}

In this subsection, we interpret Hida's integral formula (\ref{Hida-integral}) for $L(1, \ad(f) \otimes \alpha)$ in terms of cohomology.

For the purpose of $p$-adic interpolation, it will be easier to work with compactly-supported cohomology $H_c^q$ rather than cuspidal cohomology $H_\cusp^q$, since the former enjoys better functorial properties. To make this shift, recall from \cite{H99} that the natural surjection $H_c^2(Y_K, V_{n,n}(\C)) \twoheadrightarrow H_\cusp^2(Y_K, V_{n,n}(\C))$ admits a canonical section
\[
H_\cusp^2(Y_K, V_{n,n}(\C)) \hookrightarrow H_c^2(Y_K, V_{n,n}(\C)).
\]
Thus every harmonic $2$-form considered in the previous subsection will be implicitly viewed as a cohomologous compactly-supported $2$-form.

\begin{rem}
Elements of $H_c^q$ on a modular variety are often called modular symbols in the literature. When $q=1$, these can be realized as explicit homomorphisms on the divisor group of cusps (see \cite{AS} and \cite{W17}), but no such identification is available for general degree $q$.
\end{rem}

Suppose $E$ is a number field containing all the Hecke eigenvalues of $F$. By multiplicity one \cite{H94-L}, the $F$-isotypic component $H_c^2(Y_K, V_{n,n}(E))$ is a one-dimensional vector space over $E$. To work with integral coefficients, suppose $\O$ is the localization of the ring of integers of $E$ at a prime above $p$. Since $\O$ is a discrete valuation ring, the $F$-isotypic component of $H_c^2(Y_K, V_{n,n}(\O))$ is free of rank one over $\O$. Following \cite{U95}, we make:

\begin{defn}
\label{period}
There exists a period $u_2(F) \in \C^\times$, unique up to multiplication by $\O^\times$, such that
\[
\widehat{\delta}_2(F) := \frac{1}{u_2(F)} \cdot \delta_2(F) \in H_c^2(Y_K, V_{n,n}(\O))
\]
gives an $\O$-basis of $H_c^2(Y_K, V_{n,n}(\O))[F]$.
\end{defn}

Hida's integral expression can be interpreted as a formula for the composition
\[
\xymatrix{
H_c^2(Y_K, V_{n,n}(\C)) \ar[r]^-{|_\Q} & H_c^2(Y_\Q, V_{n,n}(\C)) \ar[r]^-{\pi} & H_c^2(Y_\Q, \C) \ar[r]^-{\sim} & \C.
}
\]
Note that each of these steps is defined rationally over $E$, so we may consider
\[
\xymatrix{
H_c^2(Y_K, V_{n,n}(E)) \ar[r]^-{|_\Q} & H_c^2(Y_\Q, V_{n,n}(E)) \ar[r]^-{\pi} & H_c^2(Y_\Q, E) \ar[r]^-{\sim} & E.
}
\]

\begin{prop}
\label{alg-part}
For every normalized eigenform $f \in S_{n+2}(N, \alpha)$, define the algebraic part of $L(1, \ad(f) \otimes \alpha)$ to be
\[
L^\alg(1, \ad(f) \otimes \alpha) := \frac{N^2 (k-1)!}{(2 \pi D^{-1/2})^k \phi(N)} \cdot \frac{L_N(1, \ad(f) \otimes \alpha)}{u_2(f_K)}.
\]
Then
\[
L^\alg(1, \ad(f) \otimes \alpha) \in E.
\]
Moreover, if either $p > n$ or $f$ is ordinary, then
\[
L^\alg(1, \ad(f) \otimes \alpha) \in \O.
\]
\end{prop}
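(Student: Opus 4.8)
The plan is to recognize $L^\alg(1,\ad(f)\otimes\alpha)$ as $\mathbb{L}_n(\widehat{\delta}_2(f_K))$, where
\[
\mathbb{L}_n\colon H_c^2(Y_K,V_{n,n}(E))\xrightarrow{\ |_\Q\ }H_c^2(Y_\Q,V_{n,n}(E))\xrightarrow{\ \pi\ }H_c^2(Y_\Q,E)\xrightarrow{\ \sim\ }E
\]
is the evaluation map, and then to read off both assertions from the rationality and integrality of its three constituent maps. Writing $k=n+2$, the normalizing constant in the definition of $L^\alg$ is exactly the reciprocal of the constant in Hida's formula (\ref{Hida-integral}); the factors $(2\pi D^{-1/2})^{k}$, $(k-1)!=(n+1)!$, $N^2$ and $\phi(N)$ cancel, leaving
\[
L^\alg(1,\ad(f)\otimes\alpha)=\frac{1}{u_2(f_K)}\int_{\Gamma_0(N)\backslash\mc{H}_2}\pi\bigl(\delta(F_1)|_\Q\bigr)=\mathbb{L}_n\bigl(\widehat{\delta}_2(f_K)\bigr),
\]
where the last equality uses $\widehat{\delta}_2(f_K)=u_2(f_K)^{-1}\delta_2(f_K)$ from Definition \ref{period}.

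For the rationality statement, I would note that each arrow in $\mathbb{L}_n$ is defined over $E$: restriction along $Y_\Q\hookrightarrow Y_K$ is functorial; the projection $\pi$ is induced by the Clebsch--Gordan splitting, which the Clebsch--Gordan lemma already furnishes over $\Z[\tfrac1n]\subset E$; and the last isomorphism is integration against the fundamental class of the oriented surface $Y_\Q$, which is $\Z$-rational. Since $\widehat{\delta}_2(f_K)\in H_c^2(Y_K,V_{n,n}(\O))\subset H_c^2(Y_K,V_{n,n}(E))$ by Definition \ref{period}, its image under the $E$-linear map $\mathbb{L}_n$ lies in $E$. The one point requiring care is that the analytic integral above coincides with this algebraic pairing; this is the de Rham--Betti compatibility of the integration isomorphism, under which the two computations of $L^\alg$ agree. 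Hence $L^\alg(1,\ad(f)\otimes\alpha)\in E$.

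For integrality I would track the same three maps on $\O$-lattices. Restriction and fundamental-class integration preserve integral structures, so the sole source of denominators is the Clebsch--Gordan projection onto the trivial summand $V_0$, which by the Clebsch--Gordan lemma equals $(n!)^{-2}\nabla^n$; as $\nabla^n$ has integer coefficients it already preserves $\O$-lattices, so the only denominator is the scalar $(n!)^{-2}$. When $p>n$ the integer $(n!)^2$ is a $p$-adic unit, whence $\pi$ carries $V_{n,n}(\O)$ into $\O$ and $L^\alg=\mathbb{L}_n(\widehat{\delta}_2(f_K))\in\O$ at once.

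The main obstacle is the ordinary case with $p\le n$, where $(n!)^{-2}$ genuinely contributes a power of $p$ to the denominator, and one must show that the numerator $\nabla^n\widehat{\delta}_2(f_K)$ absorbs it. Since $p$ is split in $K$ and $f$ is ordinary, the base change $f_K$ has unit $U_p$-eigenvalue $a_p(f)$, so $\widehat{\delta}_2(f_K)$ lies in the ordinary subspace $e\,H_c^2(Y_K,V_{n,n}(\O))$ cut out by Hida's projector $e=\lim_m U_p^{m!}$; on this subspace the $n!$-denominators are absorbed precisely because $U_p$ acts invertibly. I would deduce the integrality $\mathbb{L}_n(\widehat{\delta}_2(f_K))\in\O$ from Hida's integrality theorem for $L(1,\ad(f)\otimes\alpha)$ in \cite{H99}. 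Morally this is the same mechanism that forces the integral $\Lambda$-adic evaluation map of Section \ref{4} to take values in $\Lambda$: its weight-$n$ specialization differs from $\mathbb{L}_n$ only by the Euler factor $1-a_p(f)^{-2}p^{\,n-1}$, which for ordinary $f$ and $n\ge2$ is itself a $p$-adic unit.
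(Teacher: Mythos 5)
Your proposal is correct and follows essentially the same route as the paper: identify $L^\alg(1,\ad(f)\otimes\alpha)$ with $\mathbb{L}_n(\widehat{\delta}_2(f_K))$ via Hida's formula, get $E$-rationality from the $E$-rationality of the three constituent maps, isolate the Clebsch--Gordan factorial as the only denominator (unit when $p>n$), and handle the ordinary case by the standard Hida-theoretic absorption of $n!$ together with the $\Lambda$-adic evaluation map of Section \ref{4}, exactly the two alternatives the paper invokes. The only blemish is a harmless indexing slip in your final sentence: for $f \in S_{n+2}$ the Euler factor is $1-a_p(f)^{-2}p^{\,n+1}$ (the paper's $p^{k+1}$ with $k=n$), not $p^{\,n-1}$, which in fact makes the unit claim hold for all odd $n\geq 1$.
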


\begin{proof}
By Hida's integral formula (\ref{Hida-integral}), we see that
\[
L^\alg(1, \ad(f) \otimes \alpha) = \int_{\Gamma_0(N) \backslash \mc{H}_2} \pi(\widehat{\delta}(F_1)|_\Q).
\]
The normalized form $\widehat{\delta}(F_1)$ has coefficients in $\O \subset E$, so the evaluation map above yields an element of $E$.

Note that the restriction map $|_\Q$ and integration $\int_{Y_\Q}$ are defined integrally over $\O$, but the projection map $\pi = (n!)^{-1} \nabla^n$ might contain $n!$ as a denominator. If $p > n$, then $n!$ is invertible in $\O$, so $L^\alg(1, \ad(f) \otimes \alpha) \in \O$.

Finally, if $f$ is ordinary, standard arguments \cite{H88b} show that $n!$ does \emph{not} show up in the denominator, so the same conclusion holds. Alternatively, this will follow from our $p$-adic interpolation, in which the $\Lambda$-adic version of $\pi$ is defined over $\O$.
\end{proof}

\section{\texorpdfstring{$p$}{p}-adic interpolation via measures}

\label{3}

In this section, we construct a $p$-adic interpolation for the evaluation map.

\subsection{Setting and overview}

For the rest of this paper, fix an embedding $\overline{\Q} \hookrightarrow \overline{\Q_p}$, so that algebraic numbers can be viewed as $p$-adic numbers. The valuation ring $\O$ will be replaced by its $p$-adic completion, which is a $p$-adic ring of integers; again this will not cause any confusion.

Recall that Hida's integral formula can be interpreted cohomomologically as an evaluation map
\[
\xymatrix{
\mathbb{L}_k: H_c^2(Y_K, V_{k,k}(\O)) \ar[r]^-{|_\Q} & H_c^2(Y_\Q, V_{k,k}(\O)) \ar[r]^-{\pi} & H_c^2(Y_\Q, \O) \ar[r]^-{\sim} & \O
}
\]
such that for $f \in S_{k+2}(N, \alpha)$,
\[
\mathbb{L}_k(\widehat{\delta}_2(f_K)) = L^\alg(1, \ad(f) \otimes \alpha).
\]

\begin{rem}
Strictly speaking, the projection $\pi$ should map into $H_c^2(Y_\Q, \O[\frac{1}{k!}])$ rather than $H_c^2(Y_\Q, \O)$. This simplification in notation will be harmless, since our ultimate interest is in constructing $p$-adic $L$-functions on ordinary families. The reader might prefer to replace all cohomology groups $H_c^2$ with their ordinary parts $H_{c,\ord}^2$.
\end{rem}

\begin{hyp}
$p$ is an odd prime which is split in $K$.
\end{hyp}

Fix an identification of $\O_{K,p} := \O_K \otimes_\Z \Z_p$ with $\Z_p \times \Z_p$, which is induced by the two embeddings $K \hookrightarrow \overline{\Q_p}$. This hypothesis only serves to simplify notation; the case of inert $p$ can be dealt with easily.

From now on, the level $N$ will always be divisible by $p$. As $N$ will not change in what follows, we continue to write the Bianchi threefold and modular curve as
\begin{align*}
Y_K &= \Gamma_0^K(N) \backslash \mc{H}_3, \\
Y_\Q &= \Gamma_0^\Q(N) \backslash \mc{H}_2.
\end{align*}

We will consider local systems on $Y_K$ arising from $\O$-modules $M$ with an action of $\Sigma_0(p) \times \Sigma_0(p)$, where $\Sigma_0(p)$ is the monoid
\[
\Sigma_0(p) = \left\{ \begin{pmatrix} a & b \\ c & d \end{pmatrix} \in M_2(\Z_p) \cap \GL_2(\Q_p): c \in p \Z_p, d \in \Z_p^\times \right\}.
\]
In particular, the congruence subgroup $\Gamma_0^K(N)$ acts on such an $M$ via the embedding
\[
\Gamma_0^K(N) \hookrightarrow \Sigma_0(p) \times \Sigma_0(p)
\]
induced by $\O_K \hookrightarrow \O_{K,p} = \Z_p \times \Z_p$. Thus it makes sense to consider the cohomology groups $H_*^q(Y_K, M)$, which are equipped with an action of Hecke operators; the action of the $U_p$-operator will be recalled explicitly in the next section.

Under the diagonal embedding $\Sigma_0(p) \subset \Sigma_0(p)^2$, the restricted action on $M$ gives $H_*^q(Y_\Q, M)$.

Let $A$ be any complete $\Z_p$-algebra. Recall the weight space $\mc{W}$, whose $A$-valued points are $\Hom_{\mathrm{cont}}(\Z_p^\times, A^\times)$.

We first describe the space of $A$-valued measures $\mc{D}_n(A)$ on $\O_{K,p}$, which is equipped with a weight $k$ action and specialization maps $\rho_k: \mc{D}_k(A) \to V_{k,k}(A)$ for every $n$.

For the Iwasawa algebra $\Lambda = \O[[\Z_p^\times]]$ (with tautological character $\theta: \Z_p^\times \to \Lambda^\times$), we consider $\mc{D}(\Lambda)$ (equipped with a canonical weight $\theta$ action), which should be viewed as a space parametrizing $p$-adic families of measures on $\O_{K,p}$. We will see that every continuous character $\kappa: \Z_p^\times \to \O^\times$ induces a specialization map $\sp_\kappa: \Lambda \to \O$ and hence $\sp_\kappa: \mc{D}(\Lambda) \to \mc{D}_\kappa(\O)$.

Combining these, we obtain the specialization maps
\[
\mc{D}(\Lambda) \to V_{k,k}(\O)
\]
and form the diagram:
\begin{equation}
\label{guiding}
\xymatrix{
H_c^2(Y_K, \mc{D}(\Lambda)) \ar[d] \ar@{.>}[r] & H_c^2(Y_\Q, \mc{D}(\Lambda)) \ar[d] \ar@{.>}[r] & H_c^2(Y_\Q, \Lambda) \ar[d] \ar@{.>}[r]^-\sim & \Lambda \ar[d] \\
H_c^2(Y_K, V_{k,k}(\O)) \ar[r]^-{|_\Q} & H_c^2(Y_\Q, V_{k,k}(\O)) \ar[r]^-{\pi} & H_c^2(Y_\Q, \O) \ar[r]^-{\sim} & \O
}
\end{equation}
where the bottom row is Hida's evaluation map. Our goal is to fill in the dotted arrows and construct a ``big'' evaluation map
\[
\mathbb{L}_\Lambda: H_c^2(Y_K, \mc{D}(\Lambda)) \to \Lambda
\]
which specializes to
\[
\mathbb{L}_k: H_c^2(Y_K, V_{k,k}(\O)) \to \O
\]
for every $k$.

Unfortunately this will not literally work out; as we shall see later, the projection map $\mc{D}(\Lambda) \to \Lambda$ can only be defined on measures supported on a certain open subset $\O_{K,p}' \subseteq \O_{K,p}$. The failure of commutativity of this diagram will result in an Euler factor at $p$.

\subsection{Generalities on \texorpdfstring{$p$}{p}-adic measures}

We begin by considering matters in great generality. Throughout this section, $X$ will be a compact totally disconnected topological space, and $A$ will denote a complete topological $\Z_p$-algebra.

\begin{defn}
Let $\mc{C}(X; A)$ (resp.\ $\mc{C}^\infty(X; A)$) be the space of continuous (resp.\ locally constant) $A$-valued functions on $X$, equipped with the compact-open topology. The space of $A$-valued measures is defined to be
\[
\mc{D}(X; A) = \Hom_{A}(\mc{C}^\infty(X; A), A).
\]
These are abbreviated as $\mc{C}(A)$, $\mc{C}^\infty(A)$ and $\mc{D}(A)$ whenever there is no ambiguity about the base space $X$.
\end{defn}

By the density of $\mc{C}^\infty(A) \subset \mc{C}(A)$, it is easy to see that every $\mu \in \mc{D}(A)$ has a unique extension to a continuous $A$-linear homomorphism $\mc{C}(A) \to A$. Thus $\mc{D}(A)$ is isomorphic to the continuous $A$-linear dual of $\mc{C}(A)$.

\begin{rem}
If $A$ is a Banach $\Q_p$-algebra, the compact-open topology on $\mc{C}(A)$ coincides with the metric topology induced by the sup-norm.
\end{rem}

It is important to address the behavior of our construction under base change $\phi: A \to A'$, which is not available in the literature to the best of our knowledge. Since $X$ is compact, the natural map
\[
\mc{C}^\infty(A) \otimes_A A' \overset{\sim}{\to} \mc{C}^\infty(A')
\]
is an isomorphism. However, the map $\mc{C}(A) \otimes_A A' \to \mc{C}(A')$ is in general not an isomorphism.

For the space of measures, the map $\mc{D}(A) \to \mc{D}(A')$ can be defined as follows. For any $A$-linear map $\mu: \mc{C}^\infty(A) \to A$, extension of scalars to $A'$ gives
\[
\mu \otimes 1_{A'}: \mc{C}^\infty(A) \otimes_A A' \to A \otimes_A A',
\]
which is naturally an $A'$-linear map $\mc{C}^\infty(A') \to A'$. For the construction of specialization maps, we have to be more precise about the natural identifications involved, so we make:

\begin{defn}
\label{measure-base-change}
For an algebra homomorphism $\phi: A \to A'$, the base change map $\phi: \mc{D}(A) \to \mc{D}(A')$ is defined by sending $\mu \in \mc{D}(A)$ to the composite map
\[
\xymatrix{
\mc{C}^\infty(A') \ar[r]^-{i}_-{\sim} & \mc{C}^\infty(A) \otimes_A A' \ar[rr]^-{\mu \otimes 1_{A'}} && A \otimes_A A' \ar[rr]^-{\phi \otimes 1_{A'}}_-{\sim} && A',
}
\]
where $i$ is the inverse of the natural isomorphism $\mc{C}^\infty(A) \otimes_A A' \to \mc{C}^\infty(A')$.
\end{defn}

Unfortunately, $\mc{D}(A)$ does not behave well under base change. More precisely, the natural map $\mc{D}(A) \otimes_A A' \to \mc{D}(A')$ is not surjective in general, but the approach above suffices for our purpose. Alternatively, one may circumvent these issues by systematically introducing completed tensor products.

We end this general discussion by stating that the base change map on measures behaves well with respect to evaluation at functions, which is straightforward from the definitions.

\begin{prop}
\label{measure-evaluation}
Let $\phi: A \to A'$ be an algebra homomorphism, inducing $\phi: \mc{D}(A) \to \mc{D}(A')$. Then for every $f \in \mc{C}(A)$, the diagram
\[
\xymatrix{
\mc{D}(A) \ar[r] \ar[d]_{\text{evaluation at $f$}} & \mc{D}(A') \ar[d]^{\text{evaluation at $\phi \circ f$}} \\
A \ar[r]^\phi & A'
}
\]
commutes.
\end{prop}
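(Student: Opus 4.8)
The plan is to verify the claimed identity
$\phi(\mu(f)) = \phi(\mu)(\phi \circ f)$
first for locally constant $f$, where it drops out of Definition \ref{measure-base-change} after unwinding the natural identifications, and then to propagate it to all continuous $f$ by density. Here $\phi(\mu) \in \mc{D}(A')$ denotes the base change of $\mu$ and $\phi \circ f \in \mc{C}(A')$ is the composite function, which is continuous because $\phi$ is a continuous homomorphism of topological $\Z_p$-algebras. Since every $\mu \in \mc{D}(A)$ and every $\phi(\mu) \in \mc{D}(A')$ extends uniquely to a continuous functional on $\mc{C}(A)$ and $\mc{C}(A')$ respectively (as recorded just after the definition of $\mc{D}(A)$), both vertical arrows in the square are genuinely defined on all of $\mc{C}(A)$, and the two composites $\mc{C}(A) \to A'$ are continuous in $f$. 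It therefore suffices to check agreement on the dense subspace $\mc{C}^\infty(A)$.

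For the locally constant case, fix $f \in \mc{C}^\infty(A)$ and let $j : \mc{C}^\infty(A) \otimes_A A' \xrightarrow{\sim} \mc{C}^\infty(A')$ be the natural isomorphism, which sends a pure tensor $g \otimes a'$ to the function $x \mapsto \phi(g(x))\,a'$. In particular $j(f \otimes 1) = \phi \circ f$, so the inverse $i$ appearing in Definition \ref{measure-base-change} satisfies $i(\phi \circ f) = f \otimes 1$. Feeding this through the composite that defines $\phi(\mu)$, I compute $(\mu \otimes 1_{A'})(f \otimes 1) = \mu(f) \otimes 1$, and then the canonical identification $A \otimes_A A' \cong A'$ (i.e.\ $\phi \otimes 1_{A'}$) carries $\mu(f) \otimes 1$ to $\phi(\mu(f))$. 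Hence $\phi(\mu)(\phi \circ f) = \phi(\mu(f))$, which is precisely the commutativity of the square on $\mc{C}^\infty(A)$.

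To reach an arbitrary $f \in \mc{C}(A)$, I choose locally constant $f_n \to f$ in $\mc{C}(A)$; postcomposition with the continuous map $\phi$ gives $\phi \circ f_n \to \phi \circ f$ in $\mc{C}(A')$. The previous step yields $\phi(\mu(f_n)) = \phi(\mu)(\phi \circ f_n)$ for every $n$. Letting $n \to \infty$, the left-hand side tends to $\phi(\mu(f))$ by continuity of the extended $\mu$ and of $\phi$, while the right-hand side tends to $\phi(\mu)(\phi \circ f)$ by continuity of the extended $\phi(\mu)$; the two limits therefore coincide, giving the asserted identity in general.

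The only delicate point, and the sole place any hypothesis is used beyond formal definition-chasing, is the continuity bookkeeping in the last step: interchanging $\phi$ with the limit defining $\mu(f)$ and ensuring $\phi \circ f_n \to \phi \circ f$ both require $\phi$ to be continuous. This is harmless for us, since the base change maps actually occurring in the construction are the specialization homomorphisms $\sp_\kappa : \Lambda \to \O$, which are continuous; moreover, even for a merely algebraic (possibly discontinuous) $\phi$ the identity holds verbatim on $\mc{C}^\infty(A)$, which is all that the evaluation maps $\mathbb{L}_\Lambda$ and $\mathbb{L}_k$ ultimately invoke.
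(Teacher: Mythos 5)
Your proof is correct and matches the paper's (unwritten) argument: the paper dismisses the proposition as ``straightforward from the definitions,'' and your unwinding of Definition \ref{measure-base-change} on $\mc{C}^\infty(A)$ followed by the density extension to $\mc{C}(A)$ is precisely that check made explicit, with the continuity hypothesis on $\phi$ correctly identified as the one point of substance (and implicit in the paper's standing assumption that all algebras are topological $\Z_p$-algebras). One small correction to your closing remark: the map $\mathbb{L}_\Lambda$ evaluates measures at $\theta(z_1-z_2)$, which is continuous but \emph{not} locally constant (group-like elements $[z]$ separate points of $\Z_p^\times$), so the density step --- and with it the continuity of $\phi = \sp_\kappa$ --- is genuinely invoked in the paper's construction rather than being dispensable.
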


One might think of this as saying ``integration is natural''.

\begin{rem}
Measures, as considered in the topological setting, are sufficient for studying ordinary families. To extend our construction to finite-slope families on the eigencurve, we have to replace $\mc{D}(A)$ by the larger space of \emph{locally analytic} distributions.
\end{rem}

\subsection{Spaces of measures and polynomials}

We will define certains spaces of measures and polynomials which are equipped with actions of $\Sigma_0(p) \times \Sigma_0(p)$, where $\Sigma_0(p)$ is the monoid
\[
\Sigma_0(p) = \left\{ \begin{pmatrix} a & b \\ c & d \end{pmatrix} \in M_2(\Z_p) \cap \GL_2(\Q_p): c \in p \Z_p, d \in \Z_p^\times \right\}.
\]
These spaces will also be equipped with a restricted action of $\Sigma_0(p)$, via the diagonal embedding $\Sigma_0(p) \subset \Sigma_0(p) \times \Sigma_0(p)$.

\subsubsection{Polynomials}

Now we consider a $p$-adic version of the space of homogeneous polynomials.

For every $k \in \Z_{\geq 0}$, let
\[
V_{k,k}(A) = \Sym^k A^2 \otimes \Sym^k A^2
\]
be the space of polynomials in $(X_1, Y_1, X_2, Y_2)$ with coefficients in $A$ homogeneous of degree $k$ for each pair $(X_i, Y_i)$, which is equipped with a left action of $\Sigma_0(p) \times \Sigma_0(p)$ by
\[
(\gamma_1, \gamma_2) \cdot P(X,Y) = \bigotimes_{i=1}^2 P(d_iX_i-b_iY_i, -c_iX_i+a_iY_i).
\]
This clearly agrees with the space $V_{k,k}(A)$ defined in Section \ref{section-cohomology}, with the restricted action of $\Sigma_0(p)^2$ as a subgroup of $\GL_2(A)^2$.

\subsubsection{Measures}

Recall $\O_{K,p} = \O_K \otimes_\Z \Z_p \simeq \Z_p \times \Z_p$. We set
\begin{align*}
\mc{C}(A) &:= \mc{C}(\O_{K,p}; A), \\
\mc{D}(A) &:= \mc{D}(\O_{K,p}; A).
\end{align*}
For every weight $\kappa \in \mc{W}(A)$, i.e., a continuous character $\Z_p^\times \to A^\times$, there is a natural right action on $\mc{C}(A)$ by $\Sigma_0(p) \times \Sigma_0(p)$
via the formula
\[
(f |_\kappa (\gamma_1, \gamma_2))(z_1, z_2) = \kappa(c_1z_1+d_1) \kappa(c_2z_2+d_2) f \left( \frac{a_1z_1+b_1}{c_1z_1+d_1}, \frac{a_2z_2+b_2}{c_2z_2+d_2} \right).
\]

Denote by $\mc{C}_\kappa(A)$ the space $\mc{C}(A)$ equipped with this weight $\kappa$ action, and $\mc{D}_\kappa(A)$ the corresponding left action on $\mc{D}(A)$ by duality:
\[
(\gamma \cdot_\kappa \mu)(f) = \mu(f |_\kappa \gamma).
\]
If $\kappa(t) = t^k$ is an integral weight, we often write $k$ in place of $\kappa$ when there is no ambiguity, such as $\mc{C}_k(A)$ and $\mc{D}_k(A)$.

\subsection{Specialization maps}

\label{section-specialization}

The goal is to define specialization maps out of $\mc{D}(\Lambda)$ to all the polynomial spaces.

\begin{rem}
In this subsection only, formulas are sometimes written in one variable for notational simplicity. For example, the actions on $\mc{C}_\kappa(A)$ and $V_{k,k}(A)$ may respectively be denoted
\[
(f |_\kappa \gamma)(z) = \kappa(cz+d) f \left( \frac{az+b}{cz+d} \right)
\]
and
\[
(\gamma \cdot P)(X,Y) = P(dX-bY, -cX+aY)
\]
for $\gamma \in \Sigma_0(p)^2$.
\end{rem}

\subsubsection{From measures to polynomials}

For each $k \in \Z_{\geq 0}$, the weight $k$ specialization map
\[
\rho_k: \mc{D}_k(A) \to V_{k,k}(A)
\]
is defined by
\begin{align*}
\mu &\mapsto \int (X_1 - z_1 Y_1)^k (X_2 - z_2 Y_2)^k \,d\mu(z_1, z_2) \\
&= \int \left( \sum_{j=0}^k (-1)^j \binom{k}{j} z_1^j X_1^{k-j} Y_1^j \right) \left( \sum_{\ell=0}^k (-1)^\ell \binom{k}{\ell} z_2^\ell X_2^{k-\ell} Y_2^\ell \right) \,d\mu(z_1, z_2).
\end{align*}

\begin{prop}
The map $\rho_k: \mc{D}_k(A) \to V_{k,k}(A)$ is $\Sigma_0(p)^2$-equivariant.
\end{prop}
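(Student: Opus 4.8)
The plan is to verify the equivariance $\rho_k(\gamma \cdot_k \mu) = \gamma \cdot \rho_k(\mu)$ directly from the definitions, reducing first to the one-variable situation. Since the action of $\Sigma_0(p) \times \Sigma_0(p)$ on both $\mc{D}_k(A)$ and $V_{k,k}(A)$ is a tensor product of the two coordinate actions, and the integration kernel $(X_1 - z_1 Y_1)^k (X_2 - z_2 Y_2)^k$ factors accordingly, it suffices to treat a single copy of $\Sigma_0(p)$ acting on the one-variable spaces $\mc{D}(\Z_p; A)$ and $V_k(A) = \Sym^k A^2$. I would therefore fix $\gamma = \begin{pmatrix} a & b \\ c & d \end{pmatrix} \in \Sigma_0(p)$ throughout and run the argument coordinatewise.

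The heart of the matter is a single equivariance property of the moment kernel. Writing $\phi_{X,Y}(z) = (X - zY)^k$, regarded as a $V_k(A)$-valued function of $z \in \Z_p$, the claim is that $\phi_{X,Y} |_k \gamma = \phi_{dX - bY,\, -cX + aY}$. To see this, I would apply the weight $k$ action and let the automorphy factor absorb the denominator:
\[
(\phi_{X,Y} |_k \gamma)(z) = (cz + d)^k \left( X - \frac{az+b}{cz+d} Y \right)^k = \big( (cz+d) X - (az+b) Y \big)^k = \big( (dX - bY) - z(-cX + aY) \big)^k.
\]
This is exactly $\phi_{X',Y'}(z)$ with $X' = dX - bY$ and $Y' = -cX + aY$, matching the substitution that defines the $V_k$-action. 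For this computation to be legitimate I would first record that $c \in p\Z_p$ and $d \in \Z_p^\times$ force $cz + d \in \Z_p^\times$ for all $z \in \Z_p$, so that $\frac{az+b}{cz+d} \in \Z_p$ and $\phi_{X,Y} |_k \gamma$ again lies in the domain; note also that the final expression is a genuine polynomial in $z$ once the cancellation is performed, so no inversion survives under the integral.

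With this in hand the conclusion is immediate. Unwinding the definition $(\gamma \cdot_k \mu)(f) = \mu(f |_k \gamma)$ together with the fact that $\rho_k$ is integration of $\mu$ against the kernel $\phi_{X,Y}$, I obtain
\[
\rho_k(\gamma \cdot_k \mu)(X,Y) = \int (\phi_{X,Y} |_k \gamma)(z)\, d\mu(z) = \int \phi_{dX-bY,\,-cX+aY}(z)\, d\mu(z) = \rho_k(\mu)(dX - bY,\, -cX + aY),
\]
and the right-hand side is precisely $(\gamma \cdot \rho_k(\mu))(X,Y)$. The two-variable statement follows by running the same cancellation in each coordinate. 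I do not expect a genuine obstacle here: the content is entirely the bookkeeping in the displayed identity above, and the only point requiring care is confirming that $|_k \gamma$ preserves the space of measures on $\O_{K,p}$, which is guaranteed by the defining congruence conditions on $\Sigma_0(p)$.
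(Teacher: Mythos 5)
Your proof is correct, but it takes a genuinely different route from the paper's. The paper expands $\rho_k(\gamma \cdot_k \mu)$ into moments $\mu(z^\ell)$ and verifies the identity coefficient-by-coefficient in the monomial basis, via a chain of binomial-coefficient manipulations showing that the coefficient of $\mu(z^\ell)$ equals $(-1)^\ell \binom{k}{\ell} (dX-bY)^{k-\ell}(-cX+aY)^\ell$. You instead isolate a single transformation law for the moment kernel, $\phi_{X,Y}|_k\gamma = \phi_{dX-bY,\,-cX+aY}$, proved in one line once the automorphy factor $(cz+d)^k$ absorbs the denominator, and then conclude by the duality $(\gamma \cdot_k \mu)(f) = \mu(f|_k\gamma)$; implicitly you extend $\mu$ coefficientwise to the $V_k(A)$-valued kernel, which is the same harmless extension the paper uses when it writes $\rho_k$ as a sum of moments. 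Your version is shorter and more conceptual: it makes the equivariance manifestly a property of the kernel alone, so it transfers verbatim to settings (e.g.\ the locally analytic distributions mentioned in the paper's remark) where moment expansions are clumsier. What the paper's explicit computation buys is a convention audit --- the author states the full calculation is included ``since different conventions are used in the literature,'' and the binomial computation certifies the signs and the $\iota$-twist in the $V_{k,k}$-action against the chosen normalization of the weight-$k$ action. One point of wording rather than substance: your reduction to one variable should not be phrased as a decomposition of the module $\mc{D}(\Z_p \times \Z_p; A)$, which is not literally a tensor product of one-variable measure spaces; the correct mechanism, which your final sentence in effect invokes, is that the two-variable kernel identity is the product of the two one-variable identities (both the action and the kernel factor coordinatewise), after which a single application of duality in two variables finishes the argument. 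Your observation that $c \in p\Z_p$ and $d \in \Z_p^\times$ force $cz+d \in \Z_p^\times$, so the weight-$k$ action genuinely preserves $\mc{C}(\Z_p; A)$, correctly supplies the well-definedness the computation requires.
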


\begin{proof}
This is straightforward, but we provide the full calculation (in one-variable notation) since different conventions are used in the literature.

By definition,
\begin{align*}
\rho_k(\gamma \cdot_k \mu) &= \sum_{j=0}^k (-1)^j \binom{k}{j} (\gamma \cdot_k \mu)(z^j) X^{k-j} Y^j \\
&= \sum_{j=0}^k (-1)^j \binom{k}{j} \mu \left( (cz+d)^k \left(\frac{az+b}{cz+d}\right)^j \right) X^{k-j} Y^j \\
&= \sum_{j=0}^k (-1)^j \binom{k}{j} \mu \left( (az+b)^j (cz+d)^{k-j} \right) X^{k-j} Y^j.
\end{align*}
Expanding $(az+b)^j (cz+d)^{k-j}$, we see that the coefficient of $\mu(z^\ell)$ in the entire expression equals (where $\binom{q}{r} = 0$ whenever $q < r$ or $r < 0$)
\begin{align*}
& \sum_{j=0}^k (-1)^j \binom{k}{j} \left( \sum_{i=0}^\ell \binom{j}{i} a^i b^{j-i} \cdot \binom{k-j}{\ell-i} c^{\ell-i} d^{k-j-\ell+i} \right) X^{k-j} Y^j \\
=& \sum_{j=0}^k \sum_{i=0}^\ell (-1)^j \binom{k}{\ell} \binom{\ell}{i} \binom{k-\ell}{j-i} a^i c^{\ell-i} \cdot b^{j-i} d^{k-j-\ell+i} X^{k-j} Y^j \\
=& \binom{k}{\ell} \sum_{i=0}^\ell \sum_{j=0}^k \binom{\ell}{i} (-aY)^i (cX)^{\ell-i} \cdot \binom{k-\ell}{j-i} (-bY)^{j-i} (dX)^{k-j-\ell+i} \\
=& \binom{k}{\ell} \sum_{i=0}^\ell \binom{\ell}{i} (-aY)^i (cX)^{\ell-i} \sum_{j'=-i}^{k-i} \binom{k-\ell}{j'} (-bY)^{j'} (dX)^{k-\ell-j'} \quad \text{[setting $j'=j-i$]} \\
=& \binom{k}{\ell} \sum_{i=0}^\ell \binom{\ell}{i} (-aY)^i (cX)^{\ell-i} \cdot \sum_{j'=0}^{k-\ell} \binom{k-\ell}{j'} (-bY)^{j'} (dX)^{k-\ell-j'}\\
=& \binom{k}{\ell} (cX-aY)^\ell (dX-bY)^{k-\ell} \\
=& (-1)^\ell \binom{k}{\ell} (dX-bY)^{k-\ell} (-cX+aY)^\ell.
\end{align*}
Therefore we obtain
\begin{align*}
\rho_k(\gamma \cdot_k \mu) &= \sum_{\ell=0}^k (-1)^\ell \binom{k}{\ell} \mu(z^\ell) (dX-bY)^{k-\ell} (-cX+aY)^\ell \\
&= \gamma \cdot \left( \sum_{\ell=0}^k (-1)^\ell \binom{k}{\ell} \mu(z^\ell) X^{k-\ell} Y^\ell \right) \\
&= \gamma \cdot \rho_k(\mu)
\end{align*}
as desired.
\end{proof}

\begin{cor}
The induced maps on cohomology
\[
\rho_k: H_c^2(Y_K, \mc{D}_k(A)) \to H_c^2(Y_K, V_{k,k}(A))
\]
and
\[
\rho_k: H_c^2(Y_\Q, \mc{D}_k(A)) \to H_c^2(Y_\Q, V_{k,k}(A))
\]
are equivariant under the Hecke action.
\end{cor}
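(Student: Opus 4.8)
The plan is to deduce the Corollary from the preceding Proposition by pure functoriality: on the cohomology of a locally symmetric space the Hecke action is assembled from operations that are natural in the coefficient system, so a morphism of coefficient modules that is equivariant for the relevant monoid automatically induces a Hecke-equivariant map on cohomology. Thus the entire content of the Corollary is transported from the coefficient-level statement $\rho_k(\gamma \cdot_k \mu) = \gamma \cdot \rho_k(\mu)$ already established.

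Concretely, I would describe each Hecke operator by a cohomological correspondence. For a monoid element $\delta$ acting through $\Sigma_0(p) \times \Sigma_0(p)$, set $\Gamma = \Gamma_0^K(N)$, $\Gamma' = \Gamma \cap \delta \Gamma \delta^{-1}$, and let $Y_K' = \Gamma' \backslash \mc{H}_3$ sit in the correspondence $Y_K \xleftarrow{s} Y_K' \xrightarrow{t} Y_K$, where $s$ is the natural covering and $t$ is the covering twisted by $\delta$. On compactly-supported cohomology the operator is the composite $t_* \circ \tau_\delta \circ s^*$, where $s^*$ is pullback along the finite covering $s$, $\tau_\delta$ is the isomorphism of local systems induced by the action of $\delta$ on the coefficient module, and $t_*$ is the transfer along $t$. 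The same description applies on $Y_\Q$, and under the Eilenberg--MacLane identification it reduces to the usual double-coset formula on group cochains.

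I would then observe that each of the three ingredients is natural in the coefficient module: $s^*$ and $t_*$ are functorial for arbitrary homomorphisms of local systems, while the translation $\tau_\delta$ is functorial precisely for maps intertwining the $\delta$-action. Hence for any morphism $\phi \colon M \to M'$ of $\Sigma_0(p)^2$-modules the induced map $\phi_*$ commutes with every Hecke operator of this form. Taking $\phi = \rho_k$, which is $\Sigma_0(p)^2$-equivariant by the Proposition, proves the claim on $Y_K$; the argument on $Y_\Q$ is identical, using the restriction of $\rho_k$ along the diagonal $\Sigma_0(p) \subset \Sigma_0(p)^2$.

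The only step with genuine content — and the main point to verify — is that every coset representative defining the Hecke operators of interest really does act through $\Sigma_0(p) \times \Sigma_0(p)$ under $\O_K \hookrightarrow \O_{K,p} = \Z_p \times \Z_p$; that is, its $p$-adic components have lower-left entry in $p\Z_p$ and lower-right entry in $\Z_p^\times$. This is where $p \mid N$ and the split hypothesis enter: for the operators $U_p$ and $T_\ell$ with $\ell \nmid N$ one can choose representatives whose lower-left entry is divisible by $N$ (hence lies in $p\Z_p$ in both components) and whose lower-right entry is prime to $p$ (hence lies in $\Z_p^\times$). For $U_p$ this means using representatives of the shape $\begin{pmatrix} p & j \\ 0 & 1 \end{pmatrix}$, as dictated by the convention $d \in \Z_p^\times$ built into $\Sigma_0(p)$, rather than $\begin{pmatrix} 1 & j \\ 0 & p \end{pmatrix}$. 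Once this membership is confirmed the equivariance of $\rho_k$ transfers verbatim to cohomology, and no further computation is required.
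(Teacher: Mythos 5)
Your proposal is correct and takes essentially the same route as the paper, which states this corollary without proof as an immediate consequence of the $\Sigma_0(p)^2$-equivariance of $\rho_k$: the double-coset Hecke operators on $H_c^2$ are built from pullback, coefficient translation by $\delta$, and transfer, each natural in the coefficient module (the translation precisely for $\delta$-intertwining maps), so equivariance at the coefficient level transfers formally to cohomology. Your closing check that the representatives act through $\Sigma_0(p) \times \Sigma_0(p)$ agrees with the paper's explicit choice of the $\gamma_{ij}$ with upper-left entry $p$ and lower-right entry $1$ used for $U_p$ in Section 4.
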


\subsubsection{From families to specific weights}

Recall the Iwasawa algebra $\Lambda = \O[[\Z_p^\times]]$ and the space of families of measures $\mc{D}(\Lambda)$. Since $\mc{C}(\Lambda)$ and $\mc{D}(\Lambda)$ are equipped with a canonical action induced by $\theta: \Z_p^\times \to \Lambda$, it is unnecessary to use the more complicated notation $\mc{C}_\theta(\Lambda)$ and $\mc{D}_\theta(\Lambda)$.

Given any weight $\kappa: \Z_p^\times \to \O^\times$, the universal property of Iwasawa algebra gives a unique $\O$-algebra homomorphism $\sp_\kappa: \Lambda \to \O$ which fits into the commutative diagram
\[
\xymatrix{
\Z_p^\times \ar[r]^\theta \ar[rd]_\kappa & \Lambda \ar[d]^{\sp_\kappa} \\
& \O.
}
\]
This is the weight $\kappa$ specialization map on $\Lambda$, which induces specialization maps $\mc{C}(\Lambda) \to \mc{C}(\O)$ and $\mc{D}(\Lambda) \to \mc{D}(\O)$. To check that this is compatible with the $\Sigma_0(p)^2$-action, we begin with:

\begin{lem}
The map $\sp_\kappa: \mc{C}(\Lambda) \to \mc{C}_\kappa(\O)$ is $\Sigma_0(p)^2$-equivariant.
\end{lem}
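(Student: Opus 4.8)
The plan is to reduce the equivariance of $\sp_\kappa$ to two elementary properties of the underlying specialization homomorphism $\sp_\kappa: \Lambda \to \O$: that it is multiplicative, being an $\O$-algebra map, and that it interchanges the two weight characters, namely $\sp_\kappa \circ \theta = \kappa$ as characters $\Z_p^\times \to \O^\times$. The latter is exactly the content of the defining commutative triangle for $\sp_\kappa$ recorded above. Concretely, the induced map $\sp_\kappa: \mc{C}(\Lambda) \to \mc{C}(\O)$ sends a continuous function $f$ to the post-composition $\sp_\kappa \circ f$, which is again continuous; I will write the equivariance assertion as the identity $\sp_\kappa(f |_\theta \gamma) = (\sp_\kappa f) |_\kappa \gamma$ for all $\gamma \in \Sigma_0(p)^2$ and $f \in \mc{C}(\Lambda)$.

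First I would insert a short well-posedness remark ensuring that both weight actions make sense pointwise. For $\gamma_i = \begin{pmatrix} a_i & b_i \\ c_i & d_i \end{pmatrix} \in \Sigma_0(p)$ one has $c_i \in p\Z_p$ and $d_i \in \Z_p^\times$, so for every $z_i \in \Z_p$ the automorphy argument satisfies $c_i z_i + d_i \in d_i + p\Z_p \subseteq \Z_p^\times$; hence both $\theta(c_i z_i + d_i)$ and $\kappa(c_i z_i + d_i)$ are defined. Moreover the Möbius transformation $z_i \mapsto (a_i z_i + b_i)/(c_i z_i + d_i)$ sends $\Z_p$ into $\Z_p$, so the transformed point remains in $\O_{K,p}$ and the value of $f$ there is legitimate.

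The core step is then a direct computation. Evaluating $f |_\theta \gamma$ at $(z_1, z_2)$ produces a product of the two automorphy factors $\theta(c_i z_i + d_i)$ with the value of $f$ at the transformed point. Applying $\sp_\kappa$ and using multiplicativity distributes the homomorphism across this product; invoking $\sp_\kappa(\theta(c_i z_i + d_i)) = \kappa(c_i z_i + d_i)$ on each automorphy factor and $\sp_\kappa \circ f = \sp_\kappa f$ on the remaining factor reassembles precisely the weight $\kappa$ action formula applied to $\sp_\kappa f$. This yields the claimed identity, and the genuinely two-variable case follows verbatim by carrying both $z_1$ and $z_2$ through the same manipulation.

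I expect no real obstacle here: the argument is formal once the defining property $\sp_\kappa \circ \theta = \kappa$ is in hand. The only point demanding care is the \emph{bookkeeping} of which character ($\theta$ over $\Lambda$ versus $\kappa$ over $\O$) acts on which factor, together with the verification that the automorphy-factor arguments land in $\Z_p^\times$ so that both characters may be evaluated; the latter is guaranteed by the very definition of the monoid $\Sigma_0(p)$.
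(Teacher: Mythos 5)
Your proposal is correct and follows essentially the same route as the paper: a direct pointwise computation using the multiplicativity of $\sp_\kappa$ and the defining identity $\sp_\kappa \circ \theta = \kappa$, written in one-variable notation with the two-variable case following verbatim. Your additional well-posedness check that $c_i z_i + d_i \in \Z_p^\times$ and that the M\"obius transformation preserves $\Z_p$ is a sound refinement the paper leaves implicit.
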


\begin{proof}
For $\gamma = \begin{pmatrix} a & b \\ c & d \end{pmatrix} \in \Sigma_0(p)^2$ and $f \in \mc{C}(\Lambda)$, we have
\begin{align*}
\sp_\kappa(f |_\theta \gamma)(z) &= \sp_\kappa \circ (f |_\theta \gamma) (z) \\
&= \sp_\kappa \left( \theta(cz+d) f \left( \frac{az+b}{cz+d} \right) \right) \\
&= \kappa(cz+d) (\sp_\kappa \circ f)\left( \frac{az+b}{cz+d} \right) \\
&= (\sp_\kappa (f) |_\kappa \gamma)(z). \qedhere
\end{align*}
\end{proof}

As a consequence, the natural isomorphism $\sp_\kappa \otimes 1_{\O}: \mc{C}^\infty(\Lambda) \otimes_{\Lambda, \sp_\kappa} \O \overset{\sim}{\to} \mc{C}^\infty(\O)$, as well as its inverse $i_\kappa = (\sp_\kappa \otimes 1_{\O})^{-1}$, is also $\Sigma_0(p)^2$-equivariant.

Recall from Definition \ref{measure-base-change} that the map $\mc{D}(\Lambda) \to \mc{D}(\O)$ sends every measure $\mu \in \mc{D}(\Lambda) = \Hom_\Lambda(\mc{C}^\infty(\Lambda), \Lambda)$ to the composition
\[
\xymatrix{
\mc{C}^\infty(\O) \ar[r]^-{i_\kappa}_-{\sim} & \mc{C}^\infty(\Lambda) \otimes_{\Lambda, \sp_\kappa} \O \ar[rr]^-{\mu \otimes 1_{\O}} && \Lambda \otimes_{\Lambda, \sp_\kappa} \O \cong \O.
}
\]

\begin{prop}
The specialization map $\sp_\kappa: \mc{D}(\Lambda) \to \mc{D}_\kappa(\O)$ is $\Sigma_0(p)^2$-equivariant.
\end{prop}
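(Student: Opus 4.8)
The plan is to reduce the claim to the already-established $\Sigma_0(p)^2$-equivariance of $i_\kappa$ by a formal diagram chase. Write $j_\kappa = \sp_\kappa \otimes 1_\O$ for the natural isomorphism $\mc{C}^\infty(\Lambda) \otimes_{\Lambda, \sp_\kappa} \O \overset{\sim}{\to} \mc{C}^\infty(\O)$, and $q_\kappa$ for the isomorphism $\Lambda \otimes_{\Lambda, \sp_\kappa} \O \cong \O$. By Definition \ref{measure-base-change}, for $\mu \in \mc{D}(\Lambda)$ the specialization is the composite
\[
\sp_\kappa(\mu) = q_\kappa \circ (\mu \otimes 1_\O) \circ i_\kappa, \qquad i_\kappa = j_\kappa^{-1},
\]
viewed as a map $\mc{C}^\infty(\O) \to \O$. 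The goal is to show $\sp_\kappa(\gamma \cdot_\theta \mu) = \gamma \cdot_\kappa \sp_\kappa(\mu)$ for all $\gamma \in \Sigma_0(p)^2$.

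First I would observe that the right $\Sigma_0(p)^2$-action $|_\theta$ on $\mc{C}^\infty(\Lambda)$ is $\Lambda$-linear (the factor $\theta(cz+d)$ lies in $\Lambda$ and scalar multiplication commutes with it), so the operator $R_\gamma^\theta \colon h \mapsto h|_\theta \gamma$ descends to $R_\gamma^\theta \otimes 1_\O$ on $\mc{C}^\infty(\Lambda) \otimes_{\Lambda, \sp_\kappa} \O$. The defining relation $(\gamma \cdot_\theta \mu)(h) = \mu(h|_\theta \gamma)$ then yields the operator identity
\[
(\gamma \cdot_\theta \mu) \otimes 1_\O = (\mu \otimes 1_\O) \circ (R_\gamma^\theta \otimes 1_\O).
\]
Next I would evaluate both sides of the asserted equivariance at an arbitrary $g \in \mc{C}^\infty(\O)$. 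Unwinding the display above, the left-hand side becomes
\[
\sp_\kappa(\gamma \cdot_\theta \mu)(g) = q_\kappa \circ (\mu \otimes 1_\O) \circ (R_\gamma^\theta \otimes 1_\O) \circ i_\kappa(g),
\]
while the right-hand side, using the action $(\gamma \cdot_\kappa \nu)(g) = \nu(g|_\kappa \gamma)$ on $\mc{D}_\kappa(\O)$, becomes
\[
(\gamma \cdot_\kappa \sp_\kappa(\mu))(g) = q_\kappa \circ (\mu \otimes 1_\O) \circ i_\kappa(g|_\kappa \gamma).
\]
Comparing the two, it suffices to verify the single intertwining relation
\[
(R_\gamma^\theta \otimes 1_\O) \circ i_\kappa = i_\kappa \circ R_\gamma^\kappa,
\]
where $R_\gamma^\kappa \colon g \mapsto g|_\kappa \gamma$ is the weight $\kappa$ right action on $\mc{C}^\infty(\O)$.

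But this is precisely the $\Sigma_0(p)^2$-equivariance of $i_\kappa$ recorded as a consequence of the preceding Lemma: since $\sp_\kappa \colon \mc{C}(\Lambda) \to \mc{C}_\kappa(\O)$ intertwines $|_\theta$ with $|_\kappa$, the same holds for $j_\kappa$ on the locally constant subspace, and hence for its inverse $i_\kappa$. I do not expect any genuine obstacle here; the calculation is purely formal once the definitions are unwound. The only point requiring care is the bookkeeping of the $\Lambda$-module structures in the tensor products — in particular, confirming that the right action is $\Lambda$-linear so that it descends to $R_\gamma^\theta \otimes 1_\O$, allowing the two composites above to be compared term by term.
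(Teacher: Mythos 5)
Your proof is correct and takes essentially the same route as the paper's: both unwind Definition \ref{measure-base-change}, pass the action through the tensor product via the duality $(\gamma \cdot_\theta \mu)(h) = \mu(h|_\theta \gamma)$, and reduce everything to the $\Sigma_0(p)^2$-equivariance of $i_\kappa$, which follows from the preceding lemma on $\sp_\kappa: \mc{C}(\Lambda) \to \mc{C}_\kappa(\O)$. Your explicit verification that the $|_\theta$-action is $\Lambda$-linear, so that it descends to $\mc{C}^\infty(\Lambda) \otimes_{\Lambda, \sp_\kappa} \O$, makes precise a point the paper's computation uses implicitly in writing $i_\kappa(f)|_\theta \gamma$, but this is bookkeeping rather than a different argument.
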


\begin{proof}
Let us check that $\sp_\kappa(\gamma \cdot_\theta \mu) = \gamma \cdot_\kappa \sp_\kappa(\mu)$ for $\gamma \in \Sigma_0(p)^2$ and $\mu \in \Hom_\Lambda(\mc{C}^\infty(\Lambda), \Lambda)$.

For $f \in \mc{C}^\infty(\O)$, we have
\begin{align}
\sp_\kappa(\gamma \cdot_\theta \mu)(f) &= ((\gamma \cdot_\theta \mu) \otimes 1_{\O}) \circ i_\kappa(f) \nonumber \\
\label{line2} &= (\mu \otimes 1_{\O}) \left( i_\kappa(f)|_\theta \gamma \right) \\
\label{line3} &= (\mu \otimes 1_{\O}) \left( i_\kappa(f|_\kappa \gamma) \right) \\
&= \sp_\kappa(\mu)(f |_\kappa \gamma) \nonumber \\
\label{line5} &= (\gamma \cdot_\kappa \sp_\kappa(\mu))(f),
\end{align}
where (\ref{line2}) and (\ref{line5}) follow from the duality between (left) action on measures and (right) action on functions, and (\ref{line3}) follows from the equivariance of $i_\kappa$.
\end{proof}

\begin{defn}
The specialization at weight $k$ on $\mc{D}(\Lambda)$, denoted $\rho_k$, is the composition
\[
\xymatrix{
\mc{D}(\Lambda) \ar[r]^-{\sp_k} & \mc{D}_k(\O) \ar[r]^-{\rho_k} & V_{k,k}(\O),
}
\]
which is $\Sigma_0(p)^2$-equivariant.
\end{defn}

\begin{cor}
The specialization map $\rho_k: \mc{D}(\Lambda) \to V_{k,k}(\O)$ induces maps on cohomology
\[
\rho_k: H_c^2(Y_K, \mc{D}(\Lambda)) \to H_c^2(Y_K, V_{k,k}(\O))
\]
and
\[
\rho_k: H_c^2(Y_\Q, \mc{D}(\Lambda)) \to H_c^2(Y_\Q, V_{k,k}(\O)),
\]
which are equivariant under the Hecke action.
\end{cor}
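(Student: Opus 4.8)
The plan is to deduce both assertions from the functoriality of compactly-supported cohomology in the coefficient module, together with the fact just established that $\rho_k: \mc{D}(\Lambda) \to V_{k,k}(\O)$ is $\Sigma_0(p)^2$-equivariant. The entire statement is formal once this equivariance is in hand, so the work consists of tracking which group (or monoid) acts in each case and verifying that the induced sheaf morphism is compatible with the Hecke correspondences.

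First I would treat the case of $Y_K$. Recall that the local systems on $Y_K = \Gamma_0^K(N) \backslash \mc{H}_3$ attached to $\mc{D}(\Lambda)$ and $V_{k,k}(\O)$ arise by restricting the $\Sigma_0(p)^2$-action along the embedding $\Gamma_0^K(N) \hookrightarrow \Sigma_0(p) \times \Sigma_0(p)$ induced by $\O_K \hookrightarrow \O_{K,p} = \Z_p \times \Z_p$. Since $\rho_k$ is $\Sigma_0(p)^2$-equivariant, it is in particular $\Gamma_0^K(N)$-equivariant, hence defines a morphism of the associated sheaves of locally constant sections. Applying the covariant functor $H_c^2(Y_K, -)$ then yields the first map on cohomology. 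For $Y_\Q$ the only change is that $\Gamma_0^\Q(N)$ acts through the diagonal embedding $\Sigma_0(p) \hookrightarrow \Sigma_0(p)^2$; the full $\Sigma_0(p)^2$-equivariance of $\rho_k$ restricts to equivariance for this diagonal copy, and therefore for $\Gamma_0^\Q(N)$, so the same functoriality argument produces the second map.

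It remains to check Hecke equivariance. The Hecke operators on $H_c^2(Y_K, -)$ at principal ideals, and those on $H_c^2(Y_\Q, -)$ -- including the $U_p$-operator to be recalled in the next section -- are defined as double-coset operators built from the monoid $\Sigma_0(p)^2$ (respectively its diagonal), combining pullback and pushforward along the underlying correspondences with the coefficient-module action of the chosen coset representatives. Because $\rho_k$ is equivariant for the \emph{entire} monoid action, and not merely for the arithmetic group, it intertwines these coset-representative actions on $\mc{D}(\Lambda)$ and $V_{k,k}(\O)$; the induced cohomology map therefore commutes with every such operator.

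I do not expect a genuine obstacle, as the statement is a formal consequence of functoriality. The single point demanding care is that Hecke equivariance -- particularly for $U_p$, whose definition genuinely uses the monoid $\Sigma_0(p)$ rather than only the group $\Gamma_0^\Q(N)$ -- relies on $\rho_k$ being equivariant for the full monoid. This is exactly the strengthening supplied by the preceding proposition, so the verification reduces to unwinding the double-coset action term by term and matching it against $\rho_k$, a routine check that I would only sketch rather than carry out in full.
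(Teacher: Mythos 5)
Your proposal is correct and follows exactly the route the paper intends: the paper states this corollary without proof, as a formal consequence of the preceding proposition that $\rho_k$ is equivariant for the full monoid $\Sigma_0(p)^2$ (respectively its diagonal over $\Q$), which is precisely what makes the induced cohomology maps commute with the double-coset Hecke operators, including $U_p$. Your one point of emphasis---that monoid equivariance, not mere $\Gamma$-equivariance, is what Hecke equivariance requires---is exactly the right observation and matches why the paper records the stronger statement in the proposition.
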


\subsection{Evaluation map on families}

\label{section-evaluation}

In this subsection, we construct the $\Lambda$-adic maps in the top row of (\ref{guiding}). The two maps $|_\Q$ and $\int_{Y_\Q}$ can be interpolated in a straightforward manner, by generalities about cohomology.

\subsubsection{Restriction to $\Q$}

\begin{lem}
\label{big-restriction}
The diagram
\[
\xymatrix{
H_c^2(Y_K, \mc{D}(\Lambda)) \ar[d]_{\rho_k} \ar[r]^{|_\Q} & H_c^2(Y_\Q, \mc{D}(\Lambda)) \ar[d]^{\rho_k} \\
H_c^2(Y_K, V_{k,k}(\O)) \ar[r]^{|_\Q} & H_c^2(Y_\Q, V_{k,k}(\O))
}
\]
commutes.
\end{lem}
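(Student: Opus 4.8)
The plan is to treat this as a pure naturality statement. The two horizontal arrows $|_\Q$ are both the restriction map attached to the embedding $Y_\Q \hookrightarrow Y_K$ with a \emph{fixed} coefficient module, while the two vertical arrows are both induced by the \emph{single} coefficient morphism $\rho_k \colon \mc{D}(\Lambda) \to V_{k,k}(\O)$. Once I know that $|_\Q$ is functorial in the coefficient module and that $\rho_k$ is a morphism of coefficient modules for the relevant group actions, the square commutes formally, with no computation.

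First I would recall how $|_\Q$ is defined. The embedding $\mc{H}_2 \hookrightarrow \mc{H}_3$ descends to a closed (hence proper) embedding $\iota \colon Y_\Q \hookrightarrow Y_K$, corresponding at the level of fundamental groups to $\Gamma_0^\Q(N) = \Gamma_0^K(N) \cap \SL_2(\Q) \hookrightarrow \Gamma_0^K(N)$. Since $Y_K$ and $Y_\Q$ are $K(\pi,1)$'s, for a fixed coefficient module $M$ the map $|_\Q \colon H_c^2(Y_K, M) \to H_c^2(Y_\Q, M)$ is computed by restriction of compactly supported cochains along this group inclusion (equivalently, restriction of compactly supported differential forms along $\iota$, which is the picture used earlier in the paper). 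The point I need is that this restriction is natural in $M$: for any morphism $\phi \colon M \to M'$ equivariant for the ambient group actions, one has $\phi_* \circ |_\Q = |_\Q \circ \phi_*$, simply because applying $\phi$ to the values of a cochain commutes with restricting the cochain. I would invoke this standard naturality directly.

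Second, I would apply this with $M = \mc{D}(\Lambda)$, $M' = V_{k,k}(\O)$, and $\phi = \rho_k$. The only hypothesis to check is that $\rho_k$ is a morphism of coefficient modules for \emph{both} $\Gamma_0^K(N)$ and $\Gamma_0^\Q(N)$. This is precisely what has been established above: $\rho_k$ is $\Sigma_0(p)^2$-equivariant, and since $\Gamma_0^K(N)$ acts through the pair of embeddings $\O_K \hookrightarrow \O_{K,p} = \Z_p \times \Z_p$ while $\Gamma_0^\Q(N)$ acts through the diagonal $\Sigma_0(p) \hookrightarrow \Sigma_0(p)^2$, equivariance for the full monoid yields equivariance for both subgroups simultaneously.

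The one point requiring care — the closest thing to an obstacle — is the bookkeeping needed to see that the $\Gamma_0^\Q(N)$-action defining the local system $\mc{D}(\Lambda)$ on $Y_\Q$ is genuinely the restriction of the $\Gamma_0^K(N)$-action on $Y_K$, so that $\iota$ is compatible with the coefficients on both sides. This holds because for a rational integral matrix $\gamma \in \Gamma_0^\Q(N)$ the two embeddings $K \hookrightarrow \overline{\Q_p}$ agree (both restrict to the identity on $\Q$), so the pair-of-embeddings action collapses to the diagonal action — exactly the action through which $\rho_k$ is equivariant. With this identification in place there is no further difficulty, and I expect the proof to reduce to a short naturality remark rather than any calculation.
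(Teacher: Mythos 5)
Your proposal is correct and matches the paper's own proof, which likewise deduces the commutativity from the $\Sigma_0(p)^2$-equivariance of $\rho_k$ together with functoriality of restriction along the diagonal embedding $\Sigma_0(p) \subset \Sigma_0(p)^2$. Your additional check that the $\Gamma_0^\Q(N)$-action on the coefficients is the restriction of the $\Gamma_0^K(N)$-action (since both embeddings $K \hookrightarrow \overline{\Q_p}$ agree on $\Q$) is exactly the bookkeeping the paper leaves implicit.
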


\begin{proof}
This follows from the $\Sigma_0(p)^2$-equivariance of $\rho_k$ and functoriality for the diagonal embedding $\Sigma_0(p) \subset \Sigma_0(p)^2$.
\end{proof}

\subsubsection{Integration on modular curve}

Over $\C$, integration over the modular curve $Y_\Q$ gives an isomorphism $H_c^2(Y_\Q, \C) \overset{\sim}{\to} \C$. For a trivial coefficient ring $A$ in general, there is still a canonical isomorphism
\begin{align*}
H_c^2(Y_\Q, A) &\overset{\sim}{\to} A \\
\phi &\mapsto \phi \cap [Y_\Q]
\end{align*}
given by cap product $\cap: H_c^2(Y_\Q, A) \otimes_\Z H_2^\mathrm{BM}(Y_\Q, \Z) \to H_0(Y_\Q, A) \cong A$; here $[Y_\Q] \in H_2^\mathrm{BM}(Y_\Q, \Z)$ is a fundamental class of Borel--Moore homology, whose choice will be fixed throughout the paper.

\begin{lem}
\label{big-integration}
The diagram
\[
\xymatrix{
H_c^2(Y_\Q, \Lambda) \ar[d]_{\sp_k} \ar[r]^-{\sim} & \Lambda \ar[d]^{\sp_k} \\
H_c^2(Y_\Q, \O) \ar[r]^-{\sim} & \O
}
\]
commutes.
\end{lem}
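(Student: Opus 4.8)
The plan is to reduce the claim to the naturality of the cap product under a change of coefficients. Observe that both horizontal arrows are, by construction, given by capping against the \emph{same} fundamental class $[Y_\Q] \in H_2^{\mathrm{BM}}(Y_\Q, \Z)$, whose coefficients lie in $\Z$ and are therefore untouched by $\sp_k$. The left vertical arrow, on the other hand, is the map on compactly supported cohomology induced by the $\O$-algebra homomorphism $\sp_k \colon \Lambda \to \O$, regarded as a morphism of trivial (constant) coefficient systems. Thus the lemma asserts precisely that capping with $[Y_\Q]$ commutes with coefficient change in the cohomological variable: for $\phi \in H_c^2(Y_\Q, \Lambda)$ we must check $\sp_k(\phi \cap [Y_\Q]) = (\sp_k \phi) \cap [Y_\Q]$.

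First I would spell out the cap product at the (co)chain level, using compatible models for $H_c^2$ and $H_2^{\mathrm{BM}}$ (for instance compactly supported singular cochains paired against locally finite singular chains). For a $2$-cochain $\phi$ and a Borel--Moore $2$-chain $\sigma$, the cap product $\phi \cap \sigma$ is computed by evaluating $\phi$ on the front $2$-faces of the simplices of $\sigma$ and weighting their back-face $0$-simplices (vertices) by these values, landing in $C_0(Y_\Q, A)$. Applying $\sp_k$ coefficientwise to $\phi$ and then capping with $\sigma$ produces the same $0$-chain as first capping and then applying $\sp_k$, simply because $\sp_k$ is additive and the simplices of $\sigma$, together with their $\Z$-coefficients, are held fixed. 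Passing to homology and using that $Y_\Q$ is connected, so that the augmentation isomorphism $H_0(Y_\Q, A) \overset{\sim}{\to} A$ is natural in $A$, identifies these two $0$-chains with $(\sp_k \phi) \cap [Y_\Q]$ and $\sp_k(\phi \cap [Y_\Q])$ respectively, giving the desired equality.

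Since this is a standard naturality property of the cap product, valid for any additive map of coefficient modules (a fortiori for the $\O$-algebra homomorphism $\sp_k$) with the homological class held fixed, there is no genuine obstacle. The only point requiring care is to pin down compatible chain-level models so that both the cap-product pairing and the augmentation $H_0 \overset{\sim}{\to} A$ are simultaneously natural in the coefficient ring $A$, and to do so consistently with the fixed choice of $[Y_\Q]$ made earlier; once these identifications are fixed, commutativity is immediate.
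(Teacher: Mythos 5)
Your proof is correct and follows exactly the paper's (one-line) argument: the canonical isomorphism $H_c^2(Y_\Q, A) \overset{\sim}{\to} A$, given by capping with the fixed class $[Y_\Q] \in H_2^{\mathrm{BM}}(Y_\Q, \Z)$, is natural in the coefficient ring $A$. You have simply unwound this naturality at the chain level, which the paper takes as standard.
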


\begin{proof}
The canonical isomorphism is natural in $A$.
\end{proof}

\subsubsection{Projection onto trivial coefficients}

The goal here is to motivate how one might define a map $\Pi: \mc{D}(\Lambda) \to \Lambda$ which interpolates the projection maps $(k!)^{-2} \nabla^k: V_{k,k}(\O) \to \O$ for all $n$.

Consider a hypothetical diagram
\[
\xymatrix{
\mc{D}(\Lambda) \ar@{.>}[rr]^-{\Pi} \ar[d]_{\sp_k} && \Lambda \ar[dd]^{\sp_k} \\
\mc{D}_k(\O) \ar[rrd] \ar[d]_{\rho_k} \\
V_{k,k}(\O) \ar[rr]_-{(k!)^{-2} \nabla^k} && \O
}
\]
We check by a direct computation that the diagonal map has a particularly nice description.

\begin{lem}
The composition
\[
\xymatrix{
\mc{D}(A) \ar[r]^-{\rho_k} & V_{k,k}(A) \ar[rr]^-{(k!)^{-2} \nabla^k} && A
}
\]
is given by evaluation at $(z_1-z_2)^k$, i.e., $\mu \mapsto \mu[(z_1-z_2)^k]$.
\end{lem}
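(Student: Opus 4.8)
The plan is to compute $(k!)^{-2}\nabla^k\bigl(\rho_k(\mu)\bigr)$ directly by applying the differential operator $\nabla = \frac{\partial^2}{\partial X_2\,\partial Y_1} - \frac{\partial^2}{\partial X_1\,\partial Y_2}$ to the explicit polynomial
\[
\rho_k(\mu) = \int (X_1 - z_1 Y_1)^k (X_2 - z_2 Y_2)^k \, d\mu(z_1,z_2),
\]
and to show that the result is the constant (degree-zero in all four variables) $\int (z_1 - z_2)^k \, d\mu$. Since the specialization $\rho_k$ already produces an honest polynomial in $V_{k,k}(A)$ and $\nabla$ is a fixed $A$-linear operator, I can pull $\nabla^k$ inside the integral and reduce the whole problem to the purely algebraic identity
\[
(k!)^{-2}\,\nabla^k\Bigl[(X_1 - z_1 Y_1)^k (X_2 - z_2 Y_2)^k\Bigr] = (z_1 - z_2)^k,
\]
valid for each fixed pair $(z_1,z_2)$, after which integrating against $\mu$ gives the claim by $A$-linearity.

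\emph{First I would} observe that $\nabla$ acts on the factored monomial $(X_1 - z_1 Y_1)^a (X_2 - z_2 Y_2)^b$ by the two second-order derivatives: $\frac{\partial}{\partial Y_1}(X_1 - z_1 Y_1)^a = -a z_1 (X_1 - z_1 Y_1)^{a-1}$ and $\frac{\partial}{\partial X_2}(X_2 - z_2 Y_2)^b = b (X_2 - z_2 Y_2)^{b-1}$, while the other term of $\nabla$ uses $\frac{\partial}{\partial X_1}(X_1 - z_1 Y_1)^a = a(X_1 - z_1 Y_1)^{a-1}$ and $\frac{\partial}{\partial Y_2}(X_2 - z_2 Y_2)^b = -b z_2 (X_2 - z_2 Y_2)^{b-1}$. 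Combining these,
\[
\nabla\Bigl[(X_1 - z_1 Y_1)^a (X_2 - z_2 Y_2)^b\Bigr] = ab\,(z_1 - z_2)\,(X_1 - z_1 Y_1)^{a-1}(X_2 - z_2 Y_2)^{b-1}.
\]
The essential feature is that the linear forms $X_1 - z_1 Y_1$ and $X_2 - z_2 Y_2$ are each eigenfunctions for the respective partial derivatives, so $\nabla$ simply lowers each exponent by one and extracts a scalar factor $(z_1 - z_2)$.

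\emph{Then I would} iterate: starting from $a = b = k$, each application of $\nabla$ multiplies by $(z_1 - z_2)$ and by the current product of exponents, and lowers both exponents by one. After $j$ applications the factor of exponents accumulated is $\bigl(k(k-1)\cdots(k-j+1)\bigr)^2 = \bigl(k!/(k-j)!\bigr)^2$, with a $(z_1-z_2)^j$ and surviving factor $(X_1 - z_1 Y_1)^{k-j}(X_2 - z_2 Y_2)^{k-j}$. Setting $j = k$ kills both linear forms (leaving $1$) and yields
\[
\nabla^k\Bigl[(X_1 - z_1 Y_1)^k (X_2 - z_2 Y_2)^k\Bigr] = (k!)^2\,(z_1 - z_2)^k.
\]
An easy induction on $j$ makes this precise. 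Dividing by $(k!)^2$ and integrating against $\mu$ then gives exactly $\mu[(z_1 - z_2)^k]$.

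\emph{The only delicate point}—and it is mild—is justifying the interchange of $\nabla^k$ with the integral $\int \cdots \, d\mu$. This is immediate because $\rho_k(\mu)$ is a finite $A$-linear combination of the monomials $X_1^{k-j}Y_1^j X_2^{k-\ell}Y_2^\ell$ with coefficients $(-1)^{j+\ell}\binom{k}{j}\binom{k}{\ell}\mu(z_1^j z_2^\ell)$, so $\nabla^k$ acts $A$-linearly on a finite-dimensional space and the computation on the pre-integration monomial $(X_1 - z_1 Y_1)^k(X_2 - z_2 Y_2)^k$ (for a formal symbol $(z_1,z_2)$) descends to $V_{k,k}(A)$ after applying $\mu$. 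I therefore expect no genuine obstacle; the content is entirely the clean eigenfunction behaviour of $\nabla$ on the factored binomials, and the result is the asserted evaluation at $(z_1 - z_2)^k$.
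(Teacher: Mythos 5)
Your route is genuinely different from the paper's, and structurally it is the cleaner of the two. The paper expands $(X_1-z_1Y_1)^k(X_2-z_2Y_2)^k$ into monomials, quotes a closed formula for $(k!)^{-2}\nabla^k$ on the monomials $X_1^{k-a}Y_1^aX_2^{k-b}Y_2^b$ (nonzero only when $a+b=k$), and then resums a binomial identity; you instead observe that $\nabla$ acts on the product of powers of the two linear forms by lowering both exponents and extracting a scalar, and induct. Your version avoids all binomial bookkeeping and makes the mechanism transparent, while the paper's version has the side benefit of recording the monomial formula for $\nabla^k$, which is also what one needs to check the normalization $(i!)^{-2}\nabla^i$ in the Clebsch--Gordan lemma. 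Your point about interchanging $\nabla^k$ with the integral is correctly dispatched: $\rho_k(\mu)$ has finitely many coefficients, each a moment $\mu(z_1^jz_2^\ell)$, so everything reduces to finite $A$-linear algebra.

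One correction: your displayed one-step formula contradicts the four partial derivatives you yourself computed (all four of which are right). The first term of $\nabla$ contributes $-ab\,z_1$ and the second term, carrying the minus sign in $\nabla$, contributes $+ab\,z_2$, so in fact
\[
\nabla\bigl[(X_1-z_1Y_1)^a(X_2-z_2Y_2)^b\bigr] = ab\,(z_2-z_1)\,(X_1-z_1Y_1)^{a-1}(X_2-z_2Y_2)^{b-1},
\]
with $(z_2-z_1)$ rather than $(z_1-z_2)$. Iterating gives $\nabla^k\bigl[(X_1-z_1Y_1)^k(X_2-z_2Y_2)^k\bigr]=(k!)^2(z_2-z_1)^k$, so with the stated definition of $\nabla$ the composition is $\mu\mapsto(-1)^k\mu[(z_1-z_2)^k]$. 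You should not be unsettled that this disagrees with the lemma: the paper's own proof contains the compensating slip, since its monomial formula already fails at $k=1$ (one computes $\nabla(Y_1X_2)=+1$ while $(-1)^a\binom{k}{a}^{-1}$ predicts $-1$; the correct sign is $(-1)^{k-a}\binom{k}{a}^{-1}$, off by the global factor $(-1)^k$). The discrepancy is harmless downstream: it rescales the evaluation map $\mathbb{L}_k$ by $(-1)^k$ uniformly in $k$, which on the $\Lambda$-adic side is absorbed by the unit $\theta(-1)\in\Lambda^\times$ (whose weight $k$ specialization is exactly $(-1)^k$), so the Euler-factor computation and the interpolation formula are unaffected, and the period and error-term normalizations absorb the sign at each specialization. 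Still, in your write-up you should either fix the one-step formula and state the conclusion with the factor $(-1)^k$, or flip the sign in the definition of $\nabla$.
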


\begin{proof}
Recall that $\rho_k$ is evaluation at the polynomial
\begin{align*}
(X_1 - z_1 Y_1)^k (X_2 - z_2 Y_2)^k &= \left( \sum_{j=0}^k (-1)^j \binom{k}{j} X_1^{k-j} Y_1^j z_1^j \right) \left( \sum_{\ell=0}^k (-1)^\ell \binom{k}{\ell} X_2^{k-\ell} Y_2^\ell z_2^\ell \right) \\
&= \sum_{j=0}^k \sum_{\ell=0}^k (-1)^{j+\ell} \binom{k}{j} \binom{k}{\ell} X_1^{k-j} Y_1^j X_2^{k-\ell} Y_2^\ell z_1^j z_2^\ell.
\end{align*}
Since the differential operator $\nabla = \frac{\partial^2}{\partial X_2 \partial Y_1} - \frac{\partial^2}{\partial X_1 \partial Y_2}$ satisfies
\[
(k!)^{-2} \nabla^k X_1^{k-a} Y_1^a X_2^{k-b} Y_2^b =
\begin{cases}
(-1)^a \binom{k}{a}^{-1} &\text{if } k = a+b, \\
0 &\text{otherwise},
\end{cases}
\]
we see that
\begin{align*}
(k!)^{-2} \nabla^k [(X_1 - z_1 Y_1)^k (X_2 - z_2 Y_2)^k] &= \sum_{j=0}^k (-1)^k \binom{k}{j} \binom{k}{k-j} \cdot (-1)^j \binom{k}{j}^{-1} z_1^j z_2^{k-j} \\
&= \sum_{j=0}^k (-1)^{k-j} \binom{k}{j} z_1^j z_2^{k-j} \\
&= (z_1 - z_2)^k
\end{align*}
as desired.
\end{proof}

\begin{rem}
More conceptually, it is easy to check that $(z_1-z_2)^k$ is an invariant function under the Iwahori subgroup
\[
\Iw(p) := \Sigma_0(p) \cap \SL_2(\Z_p) = \left\{ \begin{pmatrix} a & b \\ c & d \end{pmatrix} \in \SL_2(\Z_p): p \mid c \right\},
\]
so evaluating a measure on $(z_1-z_2)^k$ yields the trivial coefficients; this point of view will be exploited in the next section.
\end{rem}

In view of Proposition \ref{measure-evaluation}, we want to seek a function $\O_{K,p} \to \Lambda$ which specializes to $(z_1-z_2)^k$ for all $k$. Unfortunately this is not possible; the function $z \mapsto z^k$ on $\Z_p$ cannot be $p$-adically interpolated in $k$, but on $\Z_p^\times$ it is interpolated by the tautological character $\theta: \Z_p^\times \to \Lambda^\times$. This motivates the following definition, which is the best possible interpolation of $(z_1-z_2)^k$ on $\O_{K,p}$.

\begin{defn}
The map $\Pi: \mc{D}(\Lambda) \to \Lambda$ is defined as the following evaluation:
\[
\Pi(\mu) = \int_{\O_{K,p}} \theta(z_1-z_2) \,d\mu,
\]
where $\theta: \Z_p^\times \to \Lambda^\times$ is the tautological character, extended to $\Z_p$ by zero.
\end{defn}

Note that $\theta(z_1-z_2)$ is supported on the open subspace
\[
\O_{K,p}' := \{(z_1, z_2) \in \O_{K,p}: z_1 - z_2 \in \Z_p^\times\},
\]
so $\Pi: \mc{D}(\Lambda) \to \Lambda$ factors through those measures in $\mc{D}(\Lambda)$ which are supported on $\O_{K,p}'$. As a consequence, the diagram
\[
\xymatrix{
\mc{D}(\Lambda) \ar[r]^-\Pi \ar[d]_{\sp_k} & \Lambda \ar[d]^{\sp_k} \\
\mc{D}_k(\O) \ar[r] & \O
}
\]
\emph{fails} to commute: for $\mu \in \mc{D}(\Lambda)$, going right then down gives
\[
\int_{\O_{K,p}'} (z_1-z_2)^k \,d\mu,
\]
whereas going down then right gives
\[
\int_{\O_{K,p}} (z_1-z_2)^k \,d\mu.
\]

On the level of cohomology, this means the $\Lambda$-adic map $H_c^2(Y_\Q, \mc{D}(\Lambda)) \to H_c^2(Y_\Q, \Lambda)$ does not specialize exactly to $H_c^2(Y_\Q, V_{k,k}(\O)) \to \O$. It turns out that for a $U_p$-eigenclass, their discrepancy can be measured by a suitable Euler factor; this is the subject of the next section.

\begin{rem}
Integrating $(z_1-z_2)^k$ on $\O_{K,p}'$ is the same as integrating $1_{\O_{K,p}'} (z_1-z_2)^k$ on all of $\O_{K,p}$, where $1_S$ denotes the characteristic function of $S$. Similarly as before, $1_{\O_{K,p}'} (z_1-z_2)^k$ is invariant under the Iwahori subgroup $\Iw(p) \subset \SL_2(\Z_p)$.
\end{rem}

\section{Interpolation formula on Hecke eigenclass}

\label{4}

To recap, we have constructed all the maps in the diagram
\[
\xymatrix{
H_c^2(Y_K, \mc{D}(\Lambda)) \ar[r]^-{\mathrm{res}} \ar[d] & H_c^2(Y_\Q, \mc{D}(\Lambda)) \ar[r] \ar[d] & H_c^2(Y_\Q, \Lambda) \ar[r]^-\sim \ar[dd] & \Lambda \ar[dd] \\
H_c^2(Y_K, \mc{D}_k(\O)) \ar[r]^-{\mathrm{res}} \ar[d] & H_c^2(Y_\Q, \mc{D}_k(\O)) \ar[rd] \ar[d] \ar@{}[ru]|{(*)} \\
H_c^2(Y_K, V_k(\O)) \ar[r]^-{\mathrm{res}} & H_c^2(Y_\Q, V_k(\O)) \ar[r] & H_c^2(Y_\Q, \O) \ar[r]^-\sim & \O
}
\]
but the trapezoid $(*)$ does \emph{not} commute; it is induced by
\[
\xymatrix{
\mc{D}(\Lambda) \ar[rrrrr]^-{\textstyle \mu \mapsto \int_{\O_{K,p}} \theta(z_1-z_2) \,d\mu} \ar[d] &&&&& \Lambda \ar[d] \\
\mc{D}_k(\O) \ar[rrrrr]^-{\textstyle \mu \mapsto \int_{\O_{K,p}} (z_1-z_2)^k \,d\mu} &&&&& \O
}
\]
Our next goal is to measure this discrepancy. The calculation turns out to be cleanest if we work at the level of $\mc{D}_k(\O)$, so we extract the middle row of the diagram above. To simplify notation, we denote
\begin{align*}
X &:= \O_{K,p} \simeq \Z_p \times \Z_p, \\
X' &:= \O_{k,p}' = \{(z_1,z_2) \in X: z_1-z_2 \in \Z_p^\times\}.
\end{align*}

\begin{defn}
Define the evaluation map $\Ev_X$ (resp. $\Ev_{X'}$) to be the composition
\[
\xymatrix{
H_c^2(Y_K, \mc{D}_k(\O)) \ar[r]^-{\mathrm{res}} & H_c^2(Y_\Q, \mc{D}_k(\O)) \ar[r] & H_c^2(Y_\Q, \O) \ar[r]^-{\sim} & \O,
}
\]
where the middle map is induced by the evaluation map sending $\mu \in \mc{D}_k(\O)$ to
\[
\int_X (z_1-z_2)^k \,d\mu \quad \left( \text{resp. $\int_{X'} (z_1-z_2)^k \,d\mu$} \right).
\]
\end{defn}

Recall that $\Ev_X$ encodes an actual $L$-value, but the $p$-adic $L$-function will only specialize to $\Ev_{X'}$. We shall see that on a $U_p$-eigenclass of $H_c^2(Y_K, \mc{D}_k(\O))$, they differ by an Euler factor at $p$; the situation is similar to the difference between the standard and improved $p$-adic $L$-functions in \cite{GS93}.

\subsection{Cohomological interpretation}

Our first step is to interpret each map in terms of cohomological operations:
\[
\xymatrix{
\Ev_X: H_c^2(Y_K, \mc{D}_k(\O)) \ar[r]^-{\mathrm{res}}_-{\iota^*} & H_c^2(Y_\Q, \mc{D}_k(\O)) \ar[rr]^-{\int_X}_-{\cup [(z_1-z_2)^k]} && H_c^2(Y_\Q, \O) \ar[r]^-{\sim}_-{\cap [Y_\Q]} & \O.
}
\]
\begin{enumerate}
\item The first map is restriction along the inclusion $\iota: Y_\Q \hookrightarrow Y_K$, and can be thought of as the pullback $\iota^*$.
\item The second map is induced by the evaluation of measures:
\begin{align*}
\mc{D}_k(\O) &\to \O \\
\mu &\mapsto \mu[(z_1-z_2)^k] = \int_X (z_1-z_2)^k \,d\mu.
\end{align*}
Viewing the function $(z_1-z_2)^k$ as a class $[(z_1-z_2)^k] \in H^0(Y_\Q, \mc{C}_k(\O))$, we can interpret the map as given by the cup product pairing
\[
\cup: H_c^2(Y_\Q, \mc{D}_k(\O)) \otimes_\O H^0(Y_\Q, \mc{C}_k(\O)) \to H_c^2(Y_\Q, \mc{D}_k(\O) \otimes_\O \mc{C}_k(\O)) \to H_c^2(Y_\Q, \O)
\]
with $[(z_1-z_2)^k] \in H^0(Y_\Q, \mc{C}_k(\O))$.

\item The last map is given by cap product
\[
\cap: H_c^2(Y_\Q, \O) \otimes_\Z H_2^\mathrm{BM}(Y_\Q, \Z) \to H_0(Y_\Q, \O) \cong \O
\]
with the fundamental class $[Y_\Q] \in H_2^\mathrm{BM}(Y_\Q, \Z)$.
\end{enumerate}

A similar description is available for $\Ev_{X'}$, by replacing $\int_X$ with $\int_{X'}$ in the evaluation map $\mc{D}_k(\O) \to \O$. Then the middle map $H_c^2(Y_\Q, \mc{D}_k(\O)) \to H_c^2(Y_\Q, \O)$ can be thought of as cup product with the class
\[
[1_{X'}(z_1-z_2)^k] \in H^0(Y_\Q, \mc{C}_k(\O)),
\]
where $1_{X'} = 1_{X'}(z_1,z_2)$ is the characteristic function of $X' = \{(z_1, z_2) \in X: z_1-z_2 \in \Z_p^\times\}$.

Under this formulation, the goal is to find, for a $U_p$-eigenclass $\phi \in H_c^2(Y_K, \mc{D}_k(\O))$, the difference between
\[
\Ev_X(\phi) = \left( \iota^*(\phi) \cup [(z_1-z_2)^k] \right) \cap [Y_\Q]
\]
and
\[
\Ev_{X'}(\phi) = \left( \iota^*(\phi) \cup [1_{X'}(z_1-z_2)^k] \right) \cap [Y_\Q].
\]

\subsection{Calculation with singular cohomology}

We will calculate this discrepancy by singular cohomology. For a topological space $S$, let $C_\bullet(S)$ be the singular chain complex, i.e., $C_i(S)$ is the free abelian group generated by singular $i$-simplices in $Y$, with the usual boundary maps.

Suppose $Y$ is an Eilenberg--MacLane space for $\Gamma$ with universal cover $H$. The natural action of $\Gamma$ on $H$ extends to an action on $C_\bullet(H)$, so that $C_\bullet(H)$ is equipped with the structure of a $\Gamma$-module. For any local system $M$ on $Y$, the cohomolgy $H^\bullet(Y, M)$ can be computed by the cochain complex
\[
\Hom_\Gamma(C_\bullet(H), M),
\]
and $H_c^\bullet(Y, M)$ can be computed by the compactly supported cochains, i.e., the cochains whose supports in $H$ are compact modulo $\Gamma$.

In our setting, $Y_K$ (resp. $Y_\Q$) is an Eilenberg--MacLane space for $\Gamma_K := \Gamma_0^K(N)$ (resp. $\Gamma_\Q := \Gamma_0^\Q(N)$). Thus every cohomology class in $H_c^2(Y_K, M)$ (resp. $H_c^2(Y_\Q, M)$) can be represented by a singular $2$-cochain in
\[
\Hom_{\Gamma_K}(C_2(\mc{H}_3), M) \quad \text{(resp. $\Hom_{\Gamma_\Q}(C_2(\mc{H}_2), M)$)}
\]
which is furthermore compactly supported.

\begin{lem}
Suppose $\phi \in H_c^2(Y_\Q, \mc{D}_k(\O))$ is represented by a compactly-supported singular $2$-cochain $\widetilde{\phi} \in \Hom_{\Gamma_K}(C_2(\mc{H}_3), \mc{D}_k(\O))$. Then
\[
\Ev_X(\phi) = \widetilde{\phi}(\iota([Y_\Q])) [(z_1-z_2)^k] \in \O
\]
and
\[
\Ev_{X'}(\phi) = \widetilde{\phi}(\iota([Y_\Q])) [1_{X'}(z_1-z_2)^k] \in \O.
\]
\end{lem}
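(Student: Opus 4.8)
The plan is to prove both identities at once by computing each of the three operations defining $\Ev_X$ (respectively $\Ev_{X'}$) at the level of $\Gamma$-equivariant singular cochains, and then assembling them; here $\phi$ should be read as a class in $H_c^2(Y_K, \mc{D}_k(\O))$, consistent with $\widetilde{\phi}$ being $\Gamma_K$-equivariant on $C_2(\mc{H}_3)$. Throughout I would use the description of $H_c^\bullet(Y_K, M)$ (resp.\ $H_c^\bullet(Y_\Q, M)$) by compactly-supported $\Gamma_K$-equivariant (resp.\ $\Gamma_\Q$-equivariant) cochains recalled above. First I would render the restriction $\iota^*$: it is precomposition with the chain-level pushforward $\iota_* \colon C_\bullet(\mc{H}_2) \to C_\bullet(\mc{H}_3)$ induced by $\mc{H}_2 \hookrightarrow \mc{H}_3$, so that $\iota^*\widetilde{\phi}$ is the $\Gamma_\Q$-equivariant cochain $\sigma \mapsto \widetilde{\phi}(\iota_*\sigma)$.

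Next I would treat the cup product with $[(z_1-z_2)^k]$. I would first confirm this is a genuine class in $H^0(Y_\Q, \mc{C}_k(\O))$, i.e.\ that $(z_1-z_2)^k$ is $\Gamma_\Q$-invariant for the weight-$k$ action: for $\gamma = \left(\begin{smallmatrix} a & b \\ c & d \end{smallmatrix}\right) \in \Gamma_\Q$ embedded diagonally, the identity $\frac{az_1+b}{cz_1+d} - \frac{az_2+b}{cz_2+d} = \frac{(ad-bc)(z_1-z_2)}{(cz_1+d)(cz_2+d)}$ together with $\det\gamma = 1$ exactly cancels the automorphy factor $(cz_1+d)^k(cz_2+d)^k$. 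The same holds for $1_{X'}(z_1-z_2)^k$, which is $\Iw(p)$-invariant (as noted in the preceding remark) and hence $\Gamma_\Q$-invariant since $\Gamma_\Q \subset \Iw(p)$ (as $p \mid N$). Because this degree-$0$ class is represented by a globally constant function, the Alexander--Whitney formula $(u \cup v)(\sigma) = \langle u(\sigma), v(\sigma|_{[2]}) \rangle$—with the coefficient pairing $\mc{D}_k(\O) \otimes \mc{C}_k(\O) \to \O$ being measure-evaluation—collapses to post-composing the measure-valued cochain with evaluation at that fixed function: $\sigma \mapsto \widetilde{\phi}(\iota_*\sigma)[(z_1-z_2)^k]$, and likewise $\sigma \mapsto \widetilde{\phi}(\iota_*\sigma)[1_{X'}(z_1-z_2)^k]$.

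Finally I would compute the cap product with the fundamental class by representing $[Y_\Q] \in H_2^\mathrm{BM}(Y_\Q, \Z)$ by a locally finite $\Gamma_\Q$-invariant $2$-cycle $\zeta$ on $\mc{H}_2$. At the chain level the cap product of a $2$-cocycle $u$ against a $2$-simplex is $u \cap \sigma = u(\sigma) \cdot (\text{a vertex of } \sigma)$, so under the augmentation $H_0(Y_\Q, \O) \cong \O$ the pairing becomes the sum $\sum_\sigma u(\sigma)$ over a set of $\Gamma_\Q$-representatives; this sum is \emph{finite} precisely because $u$ is compactly supported modulo $\Gamma_\Q$. Applying this to $u = \bigl(\sigma \mapsto \widetilde{\phi}(\iota_*\sigma)[(z_1-z_2)^k]\bigr)$ gives $\Ev_X(\phi) = \sum_\sigma \widetilde{\phi}(\iota_*\sigma)[(z_1-z_2)^k]$. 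The decisive final step is to pull the finite sum inside the integral, using $\O$-linearity of $\mu \mapsto \mu[(z_1-z_2)^k]$, obtaining $\bigl(\sum_\sigma \widetilde{\phi}(\iota_*\sigma)\bigr)[(z_1-z_2)^k] = \widetilde{\phi}(\iota_*\zeta)[(z_1-z_2)^k] = \widetilde{\phi}(\iota([Y_\Q]))[(z_1-z_2)^k]$, and identically with $1_{X'}(z_1-z_2)^k$ for $\Ev_{X'}$.

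The main obstacle I anticipate is making the Borel--Moore/compactly-supported pairing fully precise at the equivariant cochain level: one must check that the a priori infinite $\Gamma_\Q$-sum defining the cap product against the locally finite cycle $\zeta$ collapses to a finite sum over a fundamental domain exactly by the compact-support hypothesis on $\widetilde{\phi}$, and that the resulting value is independent both of the representative $\zeta$ of $[Y_\Q]$ and of the cochain representative of $\phi$. The remaining ingredients—the cochain formula for $\iota^*$, the Alexander--Whitney collapse of the cup product with a degree-$0$ class, and the invariance of $(z_1-z_2)^k$—are routine, the only arithmetic inputs being the determinant-one cancellation of the automorphy factor and the containment $\Gamma_\Q \subset \Iw(p)$.
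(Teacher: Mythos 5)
Your proposal is correct and takes essentially the same route as the paper's own proof: all three operations ($\iota^*$ as precomposition with the chain-level $\iota$, cup product with the degree-$0$ class as post-composition with evaluation of measures at the fixed function, and cap product with $[Y_\Q]$ as a substitution that is finite by the compact-support hypothesis) are computed at the level of $\Gamma$-equivariant singular cochains exactly as in the paper. The extra verifications you supply --- the determinant-one cancellation showing $(z_1-z_2)^k$ is $\Gamma_0^\Q(N)$-invariant, the Iwahori-invariance of $1_{X'}(z_1-z_2)^k$ via $\Gamma_0^\Q(N) \subset \Iw(p)$, and the explicit collapse of the Borel--Moore pairing to a finite sum over $\Gamma_0^\Q(N)$-orbit representatives --- are points the paper treats as standard or relegates to a remark, and your reading of $\phi$ as a class in $H_c^2(Y_K, \mc{D}_k(\O))$ correctly fixes a typo in the statement, consistent with the paper's subsequent use of the lemma.
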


\begin{proof}
Let $\sigma \in C_2(\mc{H}_2)$ denote any singular $2$-chain in $\mc{H}_2$. For emphasis, we always write $\iota(\sigma) \in C_2(\mc{H}_3)$ whenever $\sigma$ is thought of as lying inside $\mc{H}_3$.

At the level of singular cochains, the three operations above can be computed as follows. 
\begin{enumerate}
\item $\iota^* \phi \in H_c^2(Y_\Q, \mc{D}_k(\O))$ is represented by $\widetilde{\phi} \circ \iota \in \Hom_{\Gamma_\Q}(C_2(\mc{H}_2), \mc{D}_k(\O))$, i.e., the cochain
\[
\sigma \mapsto \widetilde{\phi}(\iota(\sigma)).
\]
\item Cup product is induced by evaluating $\mc{D}_k(\O)$ on the function $(z_1-z_2)^k \in \mc{C}_k(\O)$, so $(\iota^* \phi) \cup [(z_1-z_2)^k]$ is represented by the $2$-cochain
\[
\sigma \mapsto \widetilde{\phi}(\iota(\sigma)) [(z_1-z_2)^k].
\]
\item Finally, cap product with $[Y_\Q]$ corresponds to substituting $[Y_\Q]$ for $\sigma$ (this makes sense since the cochain $\widetilde{\phi}$ is compactly supported):
\[
\widetilde{\phi}(\iota([Y_\Q])) [(z_1-z_2)^k] \in \O.
\qedhere
\]
\end{enumerate}
\end{proof}

Now we recall the action of the Hecke operator $U_p$ on cohomology; here we do mean the \emph{rational} prime $p$, which is assumed to split in $K$. Suppose $M$ has a left action of $\Sigma_0(p)^2$, which restricts to an action of $\Gamma_0^K(N) \hookrightarrow \Sigma_0(p)^2$ via the embedding $\O_K \hookrightarrow \O_{k,p} = \Z_p \times \Z_p$. Then $H^q(Y_K, M)$ and $H_c^q(Y_K, M)$ are equipped with a Hecke action.

The $U_p$-operator is defined by the double coset $\Gamma_0^K(N) \delta \Gamma_0^K(N)$, where
\[
\delta := \left( \begin{pmatrix} p & 0 \\ 0 & 1 \end{pmatrix}, \begin{pmatrix} p & 0 \\ 0 & 1 \end{pmatrix} \right) \in \Sigma_0(p)^2.
\]
Choose a set of double coset representatives
\[
\gamma_{ij} := \left( \begin{pmatrix} p & i \\ 0 & 1 \end{pmatrix}, \begin{pmatrix} p & j \\ 0 & 1 \end{pmatrix} \right) \in \Sigma_0(p)^2
\]
for $i, j \in \{0, 1, \cdots, p-1\}$, so that
\[
\Gamma_0^K(N) \delta \Gamma_0^K(N) = \coprod_{i,j} \gamma_{ij} \Gamma_0^K(N) = \coprod_{i,j} \Gamma_0^K(N) \gamma_{ij}.
\]
Then we have the following description of the $U_p$-action, which is standard.

\begin{lem}
If $\phi \in H_*^q(Y_K, M)$ is represented by a cochain $\widetilde{\phi}$, then $U_p \phi$ is represented by
\[
\sigma \mapsto \sum_{i,j} \gamma_{ij} \cdot \widetilde{\phi}(\gamma_{ij}^{-1} \cdot \sigma).
\]
\end{lem}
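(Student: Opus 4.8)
The plan is to derive the formula from the standard three-step description of a double-coset Hecke operator, computed directly on the complex of $\Gamma_K$-equivariant singular cochains. Recall that $U_p$ is attached to the double coset $\Gamma_K \delta \Gamma_K$ and factors as the composite
\[
H_*^q(Y_K, M) \xrightarrow{\mathrm{res}} H_*^q(\Gamma'', M) \xrightarrow{[\delta]} H_*^q(\Gamma''', M) \xrightarrow{\mathrm{cor}} H_*^q(Y_K, M),
\]
where $\Gamma'' = \delta^{-1}\Gamma_K\delta \cap \Gamma_K$ and $\Gamma''' = \delta\Gamma''\delta^{-1} = \Gamma_K \cap \delta\Gamma_K\delta^{-1}$; here $\mathrm{res}$ is restriction, $[\delta]$ is conjugation by $\delta$ using the $\Sigma_0(p)^2$-action on $M$, and $\mathrm{cor}$ is the transfer. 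I would verify the claimed cochain formula by computing each of these three maps on a representing cochain $\widetilde\phi \in \Hom_{\Gamma_K}(C_\bullet(\mc{H}_3), M)$ and composing.

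Each step admits a clean cochain description. Restriction is simply the inclusion of $\Gamma_K$-equivariant cochains into $\Gamma''$-equivariant ones, so $\widetilde\phi \mapsto \widetilde\phi$. Conjugation sends $\widetilde\phi$ to the cochain $\sigma \mapsto \delta \cdot \widetilde\phi(\delta^{-1}\sigma)$, which is readily checked to be $\Gamma'''$-equivariant from the $\Gamma''$-equivariance of $\widetilde\phi$. The transfer, for a choice of left-coset representatives $\Gamma_K = \coprod_k r_k \Gamma'''$, is $\psi \mapsto \bigl(\sigma \mapsto \sum_k r_k \cdot \psi(r_k^{-1}\sigma)\bigr)$; its $\Gamma_K$-equivariance and independence of the $r_k$ follow from the standard reindexing of cosets using the equivariance of $\psi$. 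Composing the three maps yields $\sigma \mapsto \sum_k (r_k\delta)\cdot\widetilde\phi\bigl((r_k\delta)^{-1}\sigma\bigr)$, and since the left cosets of $\Gamma_K\delta\Gamma_K$ are exactly $\coprod_k r_k\delta\,\Gamma_K$, the elements $\{r_k\delta\}$ are precisely the representatives $\{\gamma_{ij}\}$ of the lemma; this gives the asserted formula. That the resulting cochain is again compactly supported is immediate, being a finite sum of translates of $\widetilde\phi$, and the same reindexing shows the formula is independent of the chosen $\gamma_{ij}$ (using the $\Gamma_K$-equivariance of $\widetilde\phi$ directly).

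The only genuinely delicate point is making simultaneous sense of the two roles played by $\gamma_{ij}$: on the coefficients $M$ it acts through the $p$-adic monoid $\Sigma_0(p)^2$, whereas in $\widetilde\phi(\gamma_{ij}^{-1}\sigma)$ it must act on chains in $\mc{H}_3$, i.e.\ archimedeanly. I would reconcile these by realizing the geometric Hecke correspondence on $Y_K$ attached to $\Gamma_K\delta\Gamma_K$ and checking that the coefficient map it induces agrees with the $\Sigma_0(p)^2$-action on $M$; because $p$ splits, the operator decouples as $U_{\mathfrak p}U_{\bar{\mathfrak p}}$ and the two factors of $\O_{K,p} = \Z_p \times \Z_p$ may be handled independently, so that the representatives $\gamma_{ij}$ define honest correspondences whose coefficient action is the one used above. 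This identification of the two actions is the main obstacle; once it is in place the formula is entirely formal, which is why the statement is standard.
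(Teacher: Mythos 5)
The paper offers no proof of this lemma, simply asserting it as standard, and your proposal supplies exactly the standard argument it implicitly invokes: factoring the double-coset operator as restriction to $\Gamma'' = \delta^{-1}\Gamma_K\delta \cap \Gamma_K$, conjugation by $\delta$, and transfer, then checking via $\Gamma_K\delta\Gamma_K = \coprod_k r_k\delta\,\Gamma_K$ that the composite is $\sigma \mapsto \sum_{i,j}\gamma_{ij}\cdot\widetilde{\phi}(\gamma_{ij}^{-1}\cdot\sigma)$, with the usual coset-reindexing giving $\Gamma_K$-equivariance and independence of representatives. The subtlety you flag is moreover genuine in this paper's conventions -- the coefficients carry a $\Sigma_0(p)^2$-action through the $p$-adic embedding while chains in $\mc{H}_3$ are moved archimedeanly, so the $\gamma_{ij}$ must be read as ($p$-adic images of) global matrices in $\GL_2(K)$, which exist since $p$ splits (take $\begin{pmatrix} p & \beta \\ 0 & 1 \end{pmatrix}$ with $\beta \in \O_K$, $\beta \equiv (i,j) \bmod p$ by CRT) -- and your reconciliation of the two actions is the correct way to make the formula literally well-posed.
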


Now we are ready to show the main result:

\begin{thm}
For $\phi \in H_c^2(Y_K, \mc{D}_k(\O))$,
\[
\Ev_X(U_p \phi) - \Ev_{X'}(U_p \phi) = p^{k+1} \Ev_X(\phi).
\]
\end{thm}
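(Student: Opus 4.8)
The plan is to evaluate the difference directly on singular cochains. Fix a compactly-supported representative $\widetilde\phi\in\Hom_{\Gamma_K}(C_2(\mc{H}_3),\mc{D}_k(\O))$ of $\phi$, and write $g:=1_{X\setminus X'}(z_1-z_2)^k\in\mc{C}_k(\O)$, where $X\setminus X'=\{(z_1,z_2):z_1-z_2\in p\Z_p\}$. By the first lemma of this section together with linearity in the test function, for any class $\psi$ one has $\Ev_X(\psi)-\Ev_{X'}(\psi)=\widetilde\psi(\iota([Y_\Q]))[g]$. Applying this to $\psi=U_p\phi$ and inserting the explicit $U_p$-cochain gives
\[
\Ev_X(U_p\phi)-\Ev_{X'}(U_p\phi)=\sum_{i,j}\bigl(\gamma_{ij}\cdot\widetilde\phi(\gamma_{ij}^{-1}\cdot\iota([Y_\Q]))\bigr)[g].
\]
The first move is to push the coset action onto the test function by duality, rewriting the $(i,j)$ term as $\widetilde\phi(\gamma_{ij}^{-1}\cdot\iota([Y_\Q]))[\,g|_k\gamma_{ij}\,]$.

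The key local computation is then the evaluation of $g|_k\gamma_{ij}$. Since $\gamma_{ij}=\bigl(\begin{smallmatrix}p&i\\0&1\end{smallmatrix},\begin{smallmatrix}p&j\\0&1\end{smallmatrix}\bigr)$ has lower-left entries $0$ and lower-right entries $1$, the weight-$k$ action is simply $(g|_k\gamma_{ij})(z_1,z_2)=g(pz_1+i,pz_2+j)$. I would observe that $(pz_1+i)-(pz_2+j)=p(z_1-z_2)+(i-j)$ lies in $p\Z_p$ if and only if $i\equiv j\pmod p$, i.e.\ $i=j$; hence the off-diagonal terms have $g|_k\gamma_{ij}=0$ and drop out, while for $i=j$ the indicator $1_{X\setminus X'}$ becomes identically $1$ and $g|_k\gamma_{ii}=p^k(z_1-z_2)^k$ on all of $X$. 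This reduces the difference to $p^k\sum_{i=0}^{p-1}\widetilde\phi(\gamma_{ii}^{-1}\cdot\iota([Y_\Q]))[(z_1-z_2)^k]$.

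Next I would make the geometric identification at the diagonal representatives. The point is that $\gamma_{ii}$ is the $p$-adic avatar of the \emph{rational} matrix $m_i:=\bigl(\begin{smallmatrix}p&i\\0&1\end{smallmatrix}\bigr)\in\GL_2(\Q)\subset\GL_2(K)$, namely the global Hecke representative whose image under $\O_K\hookrightarrow\Z_p\times\Z_p$ is $\gamma_{ii}$; since $\det m_i=p>0$, $m_i$ preserves $\mc{H}_2\subset\mc{H}_3$, so $\gamma_{ii}^{-1}\cdot\iota([Y_\Q])=\iota(m_i^{-1}[Y_\Q])$ as a chain. (The off-diagonal $\gamma_{ij}$ are only $K$-rational and do not preserve $\mc{H}_2$, but this is immaterial as they have already dropped out.) Setting $c_\phi(\tau):=\widetilde\phi(\iota(\tau))[(z_1-z_2)^k]$, the difference becomes $p^k\sum_i c_\phi(m_i^{-1}[Y_\Q])$, while $\Ev_X(\phi)=c_\phi([Y_\Q])$.

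Finally I would check that $c_\phi$ is a $\Gamma_\Q$-invariant, compactly supported $\O$-valued $2$-cochain: this follows from the $\Gamma_K$-equivariance of $\widetilde\phi$ together with the invariance $(z_1-z_2)^k|_k\gamma=(z_1-z_2)^k$ for $\gamma\in\Gamma_\Q$ embedded diagonally in $\SL_2(\Z_p)$ (a one-line determinant computation). Thus $c_\phi$ represents a class in $H_c^2(Y_\Q,\O)$, and $\sum_i c_\phi(m_i^{-1}(-))$ is precisely the $U_p$-operator on $H_c^2(Y_\Q,\O)$ (trivial coefficients) applied to $[c_\phi]$ before capping with $[Y_\Q]$, since $m_0,\dots,m_{p-1}$ are exactly the coset representatives for $U_p$ on the modular curve. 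As $U_p$ acts by multiplication by $p$ on $H_c^2(Y_\Q,\O)\cong\O$ — equivalently $\sum_i m_i^{-1}[Y_\Q]=p\,[Y_\Q]$ in Borel--Moore homology — we obtain $\sum_i c_\phi(m_i^{-1}[Y_\Q])=p\,c_\phi([Y_\Q])=p\,\Ev_X(\phi)$, and hence $\Ev_X(U_p\phi)-\Ev_{X'}(U_p\phi)=p^{k+1}\,\Ev_X(\phi)$. I expect the main obstacle to be the bookkeeping in this geometric identification: one must be careful that the diagonal representatives are genuinely $\Q$-rational, so that the coefficient action via $\gamma_{ii}\in\Sigma_0(p)^2$ and the geometric action on $\mc{H}_3$ arise from one and the same global element preserving $\mc{H}_2$, and one must correctly pin down the resulting scalar as the $U_p$-eigenvalue $p$ on the top compactly-supported cohomology of $Y_\Q$ with trivial coefficients.
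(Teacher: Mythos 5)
Your argument is correct and follows the same skeleton as the paper's proof: the same cochain-level lemma, the same coset representatives $\gamma_{ij}$, pushing the coefficient action onto the test function by duality, and the observation that in the difference only the diagonal terms $i=j$ survive, contributing $p^k(z_1-z_2)^k$. The one genuine divergence is the final step. The paper asserts that each individual term $\widetilde{\phi}(\gamma_{ii}^{-1}\cdot\iota([Y_\Q]))[(z_1-z_2)^k]$ already equals $\Ev_X(\phi)$, on the grounds that $\gamma_{ii}^{-1}$ comes from a rational matrix preserving $\mc{H}_2$ and hence ``fixing'' the fundamental class (the paper's phrase that $\gamma_{ii}^{-1}$ comes from an element of $\Gamma_0^\Q(N)$ is a slip, since $\det \gamma_{ii} = p$); you instead sum over $i$ first, recognize $\sigma\mapsto\sum_i c_\phi(m_i^{-1}\sigma)$ as the trivial-coefficient $U_p$-operator on $H_c^2(Y_\Q,\O)$, and use that it acts by its degree $p$, equivalently $\sum_i m_i^{-1}[Y_\Q]=p\,[Y_\Q]$ in $H_2^{\mathrm{BM}}(Y_\Q,\Z)$. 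Your route is arguably the more robust one: for a fixed $i$, the chain $m_i^{-1}[Y_\Q]$ need not descend to a Borel--Moore \emph{cycle} on $Y_\Q$, because writing $m_i^{-1}\gamma=\gamma' m_{i'}^{-1}$ with $\gamma,\gamma'\in\Gamma_\Q$ shows the boundary identifications are permuted among the indices $i$, so cancellation modulo $\Gamma_\Q$ occurs only in the full sum; thus the paper's per-term claim requires a careful choice of representing chain, while your summed correspondence/degree argument is unconditional. You are also right to flag that the diagonal representatives should be taken genuinely $\Q$-rational (among the $K$-rational representatives one may choose $u=i\in\Z$), so that the coefficient action via $\Sigma_0(p)^2$ and the geometric action on $\mc{H}_3$ come from one and the same global element preserving $\mc{H}_2$ --- a point the paper uses implicitly. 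Both routes yield the same factor $p\cdot p^k=p^{k+1}$.
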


\begin{proof}
Suppose $\phi$ is represented by $\widetilde{\phi} \in \Hom_{\Gamma_K}(C_2(\mc{H}_3), M)$. We keep track of the $U_p$-action under the three successive operations.
\begin{enumerate}
\item $\iota^*(U_p \phi)$ is represented by
\[
\sigma \mapsto \sum_{i,j} \gamma_{ij} \cdot \widetilde{\phi}(\gamma_{ij}^{-1} \cdot \iota(\sigma)).
\]

\item $\iota^*(U_p \phi) \cup [(z_1-z_2)^k)]$ is represented by
\[
\sigma \mapsto \sum_{i,j} \gamma_{ij} \cdot \widetilde{\phi}(\gamma_{ij}^{-1} \cdot \iota(\sigma)) [(z_1-z_2)^k].
\]
By definition of the $\Sigma_0(p)^2$-actions on $\mc{D}_k(\O)$ and $\mc{C}_k(\O)$, this is equal to
\begin{align*}
& \sum_{i,j} \widetilde{\phi}(\gamma_{ij}^{-1} \cdot \iota(\sigma)) [\left. (z_1-z_2)^k \right|_k \gamma_{ij}] \\
=& \sum_{i,j} \widetilde{\phi}(\gamma_{ij}^{-1} \cdot \iota(\sigma)) [((pz_1+i)-(pz_2+j))^k].
\end{align*}

\item Finally, substituting $\sigma = [Y_\Q]$ gives
\begin{equation}
\label{eval-X}
\Ev_X(U_p \phi) = \sum_{i,j} \widetilde{\phi}(\gamma_{ij}^{-1} \cdot \iota([Y_\Q])) [((pz_1+i)-(pz_2+j))^k].
\end{equation}
\end{enumerate}

The evaluation $\Ev_{X'}(U_p \phi)$ is obtained in the same way, except that the function $(z_1-z_2)^k$ is replaced by $1_{X'}(z_1-z_2)^k$:
\begin{align*}
\Ev_{X'}(U_p \phi) &= \sum_{i,j} \widetilde{\phi}(\gamma_{ij}^{-1} \cdot \iota([Y_\Q])) [\left. 1_{X'}(z_1, z_2)(z_1-z_2)^k \right|_k \gamma_{ij}] \\
&= \sum_{i,j} \widetilde{\phi}(\gamma_{ij}^{-1} \cdot \iota([Y_\Q])) [1_{X'}(pz_1+i, pz_2+j)((pz_1+i)-(pz_2+j))^k].
\end{align*}
Since $1_{X'}$ is the characteristic function of $X' = \{(z_1, z_2): z_1-z_2 \in \Z_p^\times\}$, only the terms with $i \not\equiv j \pmod{p}$ remain, yielding
\begin{equation}
\label{eval-X'}
\Ev_{X'}(U_p \phi) = \sum_{i \not\equiv j \pmod{p}} \widetilde{\phi}(\gamma_{ij}^{-1} \cdot \iota([Y_\Q])) [((pz_1+i)-(pz_2+j))^k].
\end{equation}

Comparing (\ref{eval-X}) and (\ref{eval-X'}), we get
\begin{align*}
\Ev_X(U_p \phi) - \Ev_{X'}(U_p \phi) &= \sum_{i=0}^{p-1} \widetilde{\phi}(\gamma_{ii}^{-1} \cdot \iota([Y_\Q])) [((pz_1+i)-(pz_2+i))^k] \\
&= \sum_{i=0}^{p-1} \widetilde{\phi}(\gamma_{ii}^{-1} \cdot \iota([Y_\Q])) [p^k (z_1-z_2)^k] \\
&= p^{k+1} \widetilde{\phi}(\iota([Y_\Q])) [(z_1-z_2)^k]
\end{align*}
as $\gamma_{ii}^{-1}$ comes from an element of $\Gamma_0^\Q(N) \subset \SL_2(\Q)$, whose action fixes $\mc{H}_2 \subset \mc{H}_3$ and hence the fundamental class $[Y_\Q] \in H_2^\mathrm{BM}(Y_\Q, \Z)$. This concludes the proof.
\end{proof}

As a consequence, this establishes the following identity for a $U_p$-eigenclass:

\begin{cor}
\label{p-Euler-abs}
If $U_p \phi = \alpha \phi$, then
\[
\Ev_{X'}(\phi) = (1 - \alpha^{-1} p^{k+1}) \Ev_X(\phi).
\]
\end{cor}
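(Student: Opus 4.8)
The plan is to deduce this directly from the preceding Theorem, the entire content being already contained there; the corollary is simply its specialization to a $U_p$-eigenclass. The key observation is that both evaluation maps $\Ev_X$ and $\Ev_{X'}$ are $\O$-linear in the cohomology class $\phi$. Indeed, each is a composite of the pullback $\iota^*$, cup product with a fixed class in $H^0(Y_\Q, \mc{C}_k(\O))$, and cap product with the fundamental class $[Y_\Q]$, all of which are $\O$-linear operations on cohomology. In particular they commute with scalar multiplication by elements of $\O$.

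First I would feed the eigenvalue relation $U_p\phi = \alpha\phi$ into the identity
\[
\Ev_X(U_p\phi) - \Ev_{X'}(U_p\phi) = p^{k+1}\Ev_X(\phi)
\]
furnished by the Theorem. Using linearity to rewrite $\Ev_X(U_p\phi) = \alpha\,\Ev_X(\phi)$ and $\Ev_{X'}(U_p\phi) = \alpha\,\Ev_{X'}(\phi)$, this becomes
\[
\alpha\,\Ev_X(\phi) - \alpha\,\Ev_{X'}(\phi) = p^{k+1}\,\Ev_X(\phi).
\]

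Next I would divide through by $\alpha$ and rearrange to isolate $\Ev_{X'}(\phi)$, obtaining
\[
\Ev_{X'}(\phi) = \Ev_X(\phi) - \alpha^{-1}p^{k+1}\,\Ev_X(\phi) = (1 - \alpha^{-1}p^{k+1})\,\Ev_X(\phi),
\]
as claimed. The only point requiring comment is the passage involving $\alpha^{-1}$, which presupposes that the $U_p$-eigenvalue $\alpha$ is invertible; this is automatic in our setting, since the ordinary classes to which we ultimately apply the corollary have $U_p$-eigenvalue a $p$-adic unit. There is no genuine obstacle here—the entire difficulty has been absorbed into the cohomological computation of the Theorem—so the proof is a two-line formal manipulation.
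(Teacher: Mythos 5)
Your proof is correct and matches the paper's intended argument: the corollary is stated there as an immediate consequence of the theorem, obtained exactly by substituting $U_p\phi = \alpha\phi$, using linearity of $\Ev_X$ and $\Ev_{X'}$, and dividing by $\alpha$. Your remark on the invertibility of $\alpha$ (automatic for ordinary classes, where the $U_p$-eigenvalue is a $p$-adic unit) is the right observation and is implicit in the paper's formulation.
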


\subsection{Summary}

\label{section-summary}

Composing all the evaluation maps defined in Section \ref{section-evaluation}, we obtain a big evaluation map $\mathbb{L}_\Lambda: H_c^2(Y_K, \mc{D}(\Lambda)) \to \Lambda$ fitting in the diagram
\[
\xymatrix{
H_c^2(Y_K, \mc{D}(\Lambda)) \ar[r]^-{\mathbb{L}_\Lambda} \ar[d]_{\rho_k} & \Lambda \ar[d]^{\sp_k} \\
H_c^2(Y_K, V_{k,k}(\O)) \ar[r]^-{\mathbb{L}_k} & \O
}
\]
which does \emph{not} commute; the failure to commute is made precise by the following theorem.

\begin{thm}
\label{p-Euler}
If $\Phi \in H_c^2(Y_K, \mc{D}(\Lambda))$ is a $U_p$-eigenclass with $U_p \Phi = \alpha \Phi$ (where $\alpha \in \Lambda$), then
\[
\sp_k(\mathbb{L}_\Lambda \Phi) = (1 - \sp_k(\alpha)^{-1} p^{k+1}) \mathbb{L}_k(\rho_k \Phi).
\]
\end{thm}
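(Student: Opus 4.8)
The plan is to reduce the $\Lambda$-adic statement to the weight-$k$ identity already established in Corollary \ref{p-Euler-abs}, by tracking how the specialization map $\sp_k$ interacts with each of the three constituent maps of $\mathbb{L}_\Lambda$. First I would express both sides of the desired equation in terms of the concrete evaluation maps $\Ev_X$ and $\Ev_{X'}$ applied to the specialized class $\sp_k\Phi \in H_c^2(Y_K, \mc{D}_k(\O))$. For the right-hand side, the earlier lemma identifying the composition $(k!)^{-2}\nabla^k \circ \rho_k$ with evaluation at $(z_1-z_2)^k$ shows that Hida's map $\mathbb{L}_k$ applied to $\rho_k\Phi$ is precisely $\Ev_X(\sp_k\Phi)$.

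For the left-hand side, the key observation is that the tautological character $\theta$, extended by zero to $\Z_p$, specializes under $\sp_k$ to the function $1_{X'}(z_1,z_2)\,(z_1-z_2)^k$: indeed $\sp_k \circ \theta$ agrees with $t \mapsto t^k$ on $\Z_p^\times$ and vanishes off $\Z_p^\times$, which forces the characteristic function of $X'$. Combined with the naturality of integration (Proposition \ref{measure-evaluation}), this shows that applying $\sp_k$ to the projection $\Pi(\mu) = \int_{\O_{K,p}} \theta(z_1-z_2)\,d\mu$ yields $\int_{X'}(z_1-z_2)^k\,d(\sp_k\mu)$. Together with the commutativity of restriction (Lemma \ref{big-restriction}) and of integration over $Y_\Q$ (Lemma \ref{big-integration}) with specialization, this gives $\sp_k(\mathbb{L}_\Lambda\Phi) = \Ev_{X'}(\sp_k\Phi)$.

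With both sides thus expressed through $\sp_k\Phi$, I would invoke the Hecke-equivariance of the specialization maps (the corollaries following the specialization propositions) to conclude that $\sp_k\Phi$ is a $U_p$-eigenclass with eigenvalue $\sp_k(\alpha)$. Applying Corollary \ref{p-Euler-abs} to $\phi = \sp_k\Phi$ then immediately yields
\[
\Ev_{X'}(\sp_k\Phi) = (1 - \sp_k(\alpha)^{-1} p^{k+1}) \Ev_X(\sp_k\Phi),
\]
which is the asserted formula after the identifications above.

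The main obstacle is the second step: correctly pinning down that the specialization of the $\theta$-based projection produces the $X'$-integral rather than the $X$-integral. This is precisely the source of the Euler factor, and it hinges on the convention that $\theta$ is extended by zero, so one must verify that no contribution survives on the diagonal locus $\{z_1 \equiv z_2 \bmod p\}$ after specialization. Once this discrepancy is isolated, the remaining commutativities are formal and the result follows from the already-proven absolute calculation.
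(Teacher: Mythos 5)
Your proposal is correct and follows essentially the same route as the paper: the paper's proof likewise invokes the Hecke-equivariance of $\sp_k$ to conclude that $\sp_k(\Phi)$ is a $U_p$-eigenclass with eigenvalue $\sp_k(\alpha)$ and then applies Corollary \ref{p-Euler-abs}. The identifications you spell out --- $\sp_k(\mathbb{L}_\Lambda \Phi) = \Ev_{X'}(\sp_k\Phi)$ via the specialization of $\theta$ (extended by zero) to $1_{X'}(z_1,z_2)(z_1-z_2)^k$, and $\mathbb{L}_k(\rho_k\Phi) = \Ev_X(\sp_k\Phi)$ --- are exactly the content the paper establishes beforehand in Sections \ref{section-evaluation} and \ref{4}, so your more explicit bookkeeping is a faithful unpacking rather than a different argument.
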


\begin{proof}
The specialization $\sp_k: H_c^2(Y_K, \mc{D}(\Lambda)) \to H_c^2(Y_K, \mc{D}_k(\O))$ is Hecke-equivariant, so $\sp_k(\Phi) \in H_c^2(Y_K, \mc{D}_k(\O))$ is an eigenclass with $U_p$-eigenvalue $\sp_k(\alpha)$. Now apply Corollary \ref{p-Euler-abs} to $\sp_k(\Phi)$.
\end{proof}

\section{The \texorpdfstring{$p$}{p}-adic \texorpdfstring{$L$}{L}-function on a Hida family}

\label{5}

In this section, we construct a $p$-adic $L$-function interpolating the special values $L(1, \ad(f) \otimes \alpha)$ as $f$ varies in a Hida family. Input from Hida theory will only be sketched, and the reader is strongly advised to refer to the original works of Hida.

\subsection{Base-change Hida families}

Consider the Iwasawa algebra $\Lambda_\Q := \O[[1+p\Z_p]]$. The points in $\Spec \Lambda_\Q(\overline{\Q_p})$ correspond to $\O$-algebra homomorphisms $\Lambda_\Q \to \overline{\Q_p}$, or equivalently continuous characters $1+p\Z_p \to \overline{\Q_p}^\times$. For $k \in \Z_{\geq 0}$ and a Dirichlet character $\epsilon$ of conductor $p^e$, let $P_{k,\epsilon} \in \Spec \Lambda_\Q(\overline{\Q_p})$ be the point induced by the character $\gamma \mapsto \epsilon(\gamma) \gamma^k$ for any topological generator $\gamma \in 1+p\Z_p$.

Let $\mb{h}_\Q$ be the universal ordinary (cuspidal) Hecke algebra for $\GL_2(\Q)$ defined in \cite{H86a}, \cite{H86b} (see the next paragraph for our choice of normalization), and $\mb{I}$ be a normal domain which is finite flat over $\Lambda_\Q$. Throughout we shall fix a Hida family $\lambda: \mb{h}_\Q \to \mb{I}$ with tame level $N$ and tame character $\chi: (\Z/Np\Z)^\times \to \O^\times$; equivalently, we can think of this as an ordinary $\mb{I}$-adic form $\mb{f} \in \mb{S}^\ord(N, \chi; \mb{I})$ \cite{Hida-book}.

Our convention is as follows. For any \emph{arithmetic} point $P \in \Spec \mb{I}(\overline{\Q_p})$, i.e., one that lies above $P_{k,\epsilon} \in \Spec \Lambda_\Q(\overline{\Q_p})$ for some $k \in \Z_{\geq 0}$ and $\epsilon$, the specialization $\lambda_P: \mb{h}_\Q \to \overline{\Q_p}^\times$ at $P$ corresponds to the system of Hecke eigenvalues given by an \emph{ordinary} eigenform $\mb{f}_P \in S_{k+2}(Np^e, \chi \epsilon \omega^{-k}; \overline{\Q_p})$. This normalization is unconventional, but is equivalent to the usual one and has the advantage of vastly simplifying notation for base-change considerations.

\begin{rem}
The Iwasawa algebra $\Lambda = \O[[\Z_p^\times]]$ is isomorphic to the product of $p-1$ copies of $\Lambda_\Q = \O[[1+p\Z_p]]$. Whenever a Hida family with tame character $\chi$ is given, we will implicitly fix the projection $\Lambda \twoheadrightarrow \Lambda_\Q$ that corresponds to the $p$-part of $\chi$ (a power of the Teichm\"uller character $\omega$).
\end{rem}

Let $\Lambda_K := \O[[\O_{K,p}^\times/(\text{torsion})]]$ be the Iwasawa algebra of the torsion-free part of $\O_{K,p}^\times$. In \cite{H94}, Hida defined the universal ordinary Hecke algebra $\mb{h}_K$ for $\GL_2(K)$ and shows that it is finite and torsion over $\Lambda_K$. Under the canonical base-change homomorphism $\mb{h}_K \to \mb{h}_\Q$ which is surjective, every Hida family on $\GL_2(\Q)$ lifts to a Hida family on $\GL_2(K)$.

\begin{defn}
Given a Hida family $\lambda: \mb{h}_\Q \to \mb{I}$ for $\GL_2(\Q)$, its base-change Hida family for $\GL_2(K)$ is denoted by $\lambda^K: \mb{h}_K \to \mb{I}$.
\end{defn}

At an arithmetic point $P$ lying above $P_{k, \epsilon}$, $\lambda^K$ specializes to the base-change of $\lambda_P$. Under our normalization, $\mb{f}_P$ has weight $k+2$ and its base-change $\BC(\mb{f}_P)$ has weight $(k,k)$.

\begin{rem}
Geometrically, $\lambda^K$ corresponds to an irreducible component of $\mb{h}_K$ which is supported over the parallel weights on $\O_{K,p}^\times$. 
\end{rem}

\subsection{Control theorem}

In \cite{H93} and \cite{H94}, Hida proved control theorems for the ordinary parts of the $p^\infty$-level cohomology group and universal Hecke algebra for $\GL_2$ over an arbitrary number field. In the base-change situation, we have the following:

\begin{thm}
\label{mult-one}
Suppose $\lambda: \mb{h}_\Q \to \mb{I}$ is a Hida family. Then the $\lambda^K$-eigenspace for $\mb{h}_K$ in
\[
H_{c, \ord}^2(Y_K, \mc{D}(\Lambda)) \otimes_\Lambda \Frac(\mb{I})
\]
is one-dimensional.
\end{thm}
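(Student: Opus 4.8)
The plan is to reduce the statement to multiplicity one for classical Bianchi eigenforms by means of Hida's control theorem. Write $M := H_{c,\ord}^2(Y_K, \mc{D}(\Lambda))$, a $\Lambda$-module carrying a commuting action of $\mb{h}_K$, and set
\[
V := \left( M \otimes_\Lambda \Frac(\mb{I}) \right)[\lambda^K],
\]
the $\Frac(\mb{I})$-subspace on which $\mb{h}_K$ acts through $\lambda^K \colon \mb{h}_K \to \mb{I} \hookrightarrow \Frac(\mb{I})$; the goal is to show $\dim_{\Frac(\mb{I})} V = 1$. First I would record that $V$ is finite-dimensional: by the control theorems of \cite{H93, H94}, $M$ is finitely generated over $\Lambda$, and after projecting to the parallel-weight component attached to the base-change locus (the factor $\Lambda_\Q$ of $\Lambda$ matching the $p$-part of the tame character), it is finitely generated over $\Lambda_\Q$. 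Since $\mb{I}$ is finite over $\Lambda_\Q$, the space $M \otimes_\Lambda \Frac(\mb{I})$ is finite-dimensional over $\Frac(\mb{I})$.

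Next I would compute the dimension of $V$ by specialization at a generic arithmetic point. By Hida's control theorem, for all but finitely many arithmetic $P \in \Spec \mb{I}(\overline{\Q_p})$ lying above $P_{k,\epsilon}$, the specialization map $\rho_k$ induces a Hecke-equivariant isomorphism
\[
M \otimes_{\Lambda} \overline{\Q_p} \overset{\sim}{\to} H_{c,\ord}^2(Y_K, V_{k,k}(\overline{\Q_p})),
\]
where $\overline{\Q_p}$ is viewed as a $\Lambda$-algebra through $P$. Passing to $\lambda_P^K$-eigenspaces identifies the fibre of $V$ at $P$ with the $\lambda_P^K$-eigenspace of the classical ordinary cohomology. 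As $\lambda_P^K$ is, by construction, the system of Hecke eigenvalues of the base-change eigenform $\BC(\mb{f}_P)$, the multiplicity-one statement for Bianchi forms recorded earlier \cite{H94-L} — namely that $H_c^2(Y_K, V_{k,k}(E))[F]$ is one-dimensional — shows this fibre is exactly one-dimensional.

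It remains to upgrade this pointwise computation to the generic rank. Hida's control theorem furnishes not merely finite generation but torsion-freeness (indeed freeness) of the $\lambda^K$-eigenmodule $\mathcal{N} \subset M$ over $\mb{I}$; combined with the fact that $\mb{I}$ is a normal domain, the generic rank of $\mathcal{N}$ is then detected by its fibre dimension at any arithmetic point outside the (proper, closed) non-free locus. Since the preceding step exhibits a Zariski-dense set of such points at which the fibre dimension equals $1$, we get $\dim_{\Frac(\mb{I})} V = \operatorname{rank}_{\mb{I}} \mathcal{N} = 1$. That the rank is $1$ rather than $0$ — i.e.\ that $\lambda^K$ genuinely occurs in $M$ — is guaranteed by the surjectivity of the base-change map $\mb{h}_K \twoheadrightarrow \mb{h}_\Q$, which lifts the family $\lambda$ to a family contributing to the cohomology.

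The hard part will be the control theorem in exactly the form invoked above: degree-two compactly-supported cohomology with measure coefficients $\mc{D}(\Lambda)$, restricted to the parallel-weight (base-change) locus, and with the ordinary idempotent $e = \lim U_p^{n!}$ applied. One must verify that applying $e$ removes the torsion and boundary phenomena that obstruct control in degree two for $\GL_2$ over an imaginary quadratic field, that the maps $\rho_k$ become isomorphisms after inverting $p$ for almost all $k$, and that the resulting $\lambda^K$-eigenmodule is free (or at least torsion-free) over $\mb{I}$ so that generic rank is read off arithmetic fibres. All of these I would extract from \cite{H93, H94}; once they are in place, the reduction to classical multiplicity one and the semicontinuity argument are formal.
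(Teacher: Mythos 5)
Your proposal is correct and takes essentially the same route as the paper: the paper's (sketched) proof likewise rests on Hida's control theory from \cite{H93, H94} — every irreducible component of $\mb{h}_K$ occurs in $H_{c,\ord}^2(Y_K, \mc{D}(\Lambda_K))$, with base-change components supported on the parallel-weight locus $\Lambda_K \twoheadrightarrow \Lambda_\Q$ — combined with multiplicity one for $\GL_2(K)$. The only difference is presentational: the paper invokes strong multiplicity one directly at the $\Lambda$-adic level, whereas you run the equivalent fibrewise version (specialization at a Zariski-dense set of arithmetic points, classical multiplicity one from \cite{H94-L}, then semicontinuity — where, note, torsion-freeness off a proper closed locus suffices and the freeness you assert is not needed), and the control-theoretic inputs you flag as requiring verification in degree two are exactly those the paper also delegates to \cite{H93, H94}.
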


\begin{proof}[Proof (sketch)]
By \cite{H94}, every irreducible component of $\mb{h}_K$ occurs in $H_{c, \ord}^2(Y_K, \mc{D}(\Lambda_K))$. Components which are base-changed via $\mb{h}_K \to \mb{h}_\Q$ factor through $\Lambda_K \twoheadrightarrow \Lambda_\Q$ corresponding to the closed subscheme of $\Spec \Lambda_K$ of parallel weights.

Strong multiplicity one for $\GL_2(K)$ implies that the eigenspace is one-dimensional.
\end{proof}

By Theorem \ref{mult-one}, we may choose a $\Frac(\mb{I})$-basis $\mc{F}$ of the $\lambda^K$-eigenspace, which realizes the base-change Hida family as a Hecke eigenclass in the cohomology $H_{c, \ord}^2(Y_K, \mc{D}(\Lambda)) \otimes_\Lambda \Frac(\mb{I})$. Roughly speaking, we can apply the weight $k$ specialization map
\[
\rho_k: H_c^2(Y_K, \mc{D}(\Lambda)) \to H_c^2(Y_K, V_{k,k}(\O))
\]
to get
\[
\rho_k(\mc{F}) \in H_c^2(Y_K, V_{k,k}(\O)) \otimes_\O \overline{\Q_p}.
\]
Comparing this with the basis elements $\widehat{\delta}_2$ defined in Proposition \ref{period} yields $p$-adic error terms which measure how far each $\rho_k(\mc{F})$ is from being integral. There are two minor issues with this, however:

\begin{enumerate}
\item We have to avoid the primes $P$ which divide the denominator of $\mc{F}$.
\item We have only set up the notation for Bianchi forms with \emph{trivial} Nebentype.
\end{enumerate}
To address (1), we simply need to avoid finitely many primes. Although (2) poses a serious condition on the tame character of the given Hida family, it will be compatible with our setting. Thus we content ourselves with an \textit{ad hoc} definition.

\begin{defn}
\label{error}
Suppose $\lambda: \mb{h}_\Q \to \mb{I}$ is a Hida family, and $\mc{F}$ is a $\Frac(\mb{I})$-basis of the $\lambda^K$-eigenspace of $H_{c, \ord}^2(Y_K, \mc{D}(\Lambda)) \otimes_\Lambda \Frac(\mb{I})$ by Theorem \ref{mult-one}. Then for all arithmetic points $P \in \Spec \mb{I}(\overline{\Q_p})$ of weight $k = k_P \in \Z_{\geq 2}$ such that:
\begin{enumerate}
\item $\mc{F}$ does not have a pole at $P$;
\item the base-change of $\mb{f}_P$ has \emph{trivial} Nebentype;
\end{enumerate}
the $p$-adic error term $c_P(\mc{F}) \in \overline{\Q_p}$ is defined by the relation
\[
\rho_k(\mc{F}) = c_P(\mc{F})\cdot \widehat{\delta}_2(\BC(\mb{f}_P))
\]
in $H_c^2(Y_K, V_{k,k}(\O)) \otimes_\O \overline{\Q_p}$. Note that $c_P(\mc{F}) \in \overline{\Q_p}^\times$ for all but finitely many such $P$.
\end{defn}

In the absence of additional hypotheses, it seems difficult to control these error terms $c_P$ as $P$ varies over the arithmetic points, but recent breakthroughs in modularity lifting might allow us to obtain better control. This is similar to the difference between \cite{GS93} and \cite{K94}; the former obtains no control over the $p$-adic error terms, and the latter imposes Gorenstein-type conditions to ensure the $p$-adic error terms are units.

\subsection{Construction of the \texorpdfstring{$p$}{p}-adic \texorpdfstring{$L$}{L}-function}

Finally, we are ready to construct the $p$-adic $L$-function by evaluating the big evaluation map $\mathbb{L}_\Lambda$ on a Hecke eigenclass realizing the base-change Hida family.
\[
\xymatrix{
H_c^2(Y_K, \mc{D}(\Lambda)) \ar[r]^-{\mathbb{L}_\Lambda} \ar[d]_{\rho_k} & \Lambda \ar[d]^{\sp_k} \\
H_c^2(Y_K, V_{k,k}(\O)) \ar[r]^-{\mathbb{L}_k} & \O
}
\]
For any classical eigenform $f$ with level divisible by $p$, we denote by $a_p(f)$ its $U_p$-eigenvalue.

\begin{thm}
\label{main-thm}
Let $\lambda: \mb{h}_\Q \to \mb{I}$ be a Hida family with tame character $\alpha \omega^r$, and $\mc{F}$ be a $\Frac(\mb{I})$-basis of the $\lambda^K$-eigenspace of $H_{c, \ord}^2(Y_K, \mc{D}(\Lambda)) \otimes_\Lambda \Frac(\mb{I})$. Let $A_r \subset \Spec \mb{I}(\overline{\Q_p})$ be the set of arithmetic points lying above $P_k$ with $k \equiv r \pmod{p-1}$. Then there exists $\mc{L} \in \Frac(\mb{I})$ such that
\[
\mc{L}(P) = c_P(\mc{F}) (1 - a_p(\mb{f}_P)^{-2} p^{k+1}) L^\alg(1, \ad(\mb{f}_P) \otimes \alpha)
\]
for almost all $P \in A_r$.
\end{thm}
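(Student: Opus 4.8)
The plan is to take $\mc{L} := \mathbb{L}_\Lambda(\mc{F})$ as the $p$-adic $L$-function and to read off its interpolation property directly from Theorem \ref{p-Euler}. Since the big evaluation map $\mathbb{L}_\Lambda \colon H_c^2(Y_K, \mc{D}(\Lambda)) \to \Lambda$ is $\Lambda$-linear, it extends after $\otimes_\Lambda \Frac(\mb{I})$ to a map $H_c^2(Y_K, \mc{D}(\Lambda)) \otimes_\Lambda \Frac(\mb{I}) \to \Frac(\mb{I})$; applying it to the eigenclass $\mc{F}$ furnishes the element $\mc{L} \in \Frac(\mb{I})$.

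First I would fix an arithmetic point $P \in A_r$ of weight $k$ at which $\mc{F}$ has no pole, and write $U_p \mc{F} = \alpha \mc{F}$ with $\alpha \in \mb{I}$ the image of the rational $U_p$ under $\lambda^K$. The key step is to evaluate $\sp_P(\mc{L})$ by means of Theorem \ref{p-Euler}: specializing $\mc{F}$ to $\sp_P(\mc{F}) \in H_c^2(Y_K, \mc{D}_k(\O)) \otimes_\O \overline{\Q_p}$ produces a $U_p$-eigenclass with eigenvalue $\sp_P(\alpha)$, to which Corollary \ref{p-Euler-abs} applies verbatim; this is the mild $\mb{I}$-coefficient extension of Theorem \ref{p-Euler}, valid away from the finitely many poles of $\mc{F}$. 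It gives
\[
\sp_P(\mc{L}) = \bigl(1 - \sp_P(\alpha)^{-1} p^{k+1}\bigr)\, \mathbb{L}_k(\rho_k \mc{F}).
\]
Because $p = \mathfrak{p}\,\overline{\mathfrak{p}}$ splits in $K$ and the base-change relation gives $a_{\mathfrak{p}} = a_{\overline{\mathfrak{p}}} = a_p(\mb{f}_P)$, the rational $U_p$ (attached to $\delta = (\mathrm{diag}(p,1), \mathrm{diag}(p,1))$) acts on $\BC(\mb{f}_P)$ as $U_{\mathfrak{p}} U_{\overline{\mathfrak{p}}}$ with eigenvalue $\sp_P(\alpha) = a_p(\mb{f}_P)^2$, reproducing the Euler factor $1 - a_p(\mb{f}_P)^{-2} p^{k+1}$. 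Since $P \in A_r$ forces $k \equiv r \pmod{p-1}$, the form $\mb{f}_P$ has Nebentype $\alpha$ and $\BC(\mb{f}_P)$ has trivial Nebentype, so Definition \ref{error} applies and $\rho_k(\mc{F}) = c_P(\mc{F})\, \widehat{\delta}_2(\BC(\mb{f}_P))$. Feeding this into $\mathbb{L}_k$ and invoking the identity $\mathbb{L}_k(\widehat{\delta}_2(f_K)) = L^\alg(1, \ad(f) \otimes \alpha)$ from Proposition \ref{alg-part} yields
\[
\mathbb{L}_k(\rho_k \mc{F}) = c_P(\mc{F})\, L^\alg(1, \ad(\mb{f}_P) \otimes \alpha),
\]
and combining the two displays gives the asserted formula.

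The main difficulty is organizational rather than conceptual: one must verify that the point-evaluation $\sp_P$ on $\Frac(\mb{I})$ is compatible with every $\Lambda$-linear arrow in the construction, in particular that it restricts on $\Lambda$ (via the projection $\Lambda \twoheadrightarrow \Lambda_\Q$ fixed by the tame character) to the weight-$k$ specialization $\sp_k$ entering Theorem \ref{p-Euler}. The quantifier ``almost all $P \in A_r$'' then absorbs precisely the finitely many arithmetic points over $A_r$ at which $\mc{F}$, and hence $\mc{L}$, has a pole; the inverse $\sp_P(\alpha)^{-1} = a_p(\mb{f}_P)^{-2}$ imposes no further constraint, since ordinarity of the Hida family guarantees that $a_p(\mb{f}_P)$ is a $p$-adic unit and in particular nonzero.
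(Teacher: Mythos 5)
Your proposal is correct and follows essentially the same route as the paper's own proof: defining $\mc{L} := \mathbb{L}_\Lambda(\mc{F})$ after extending scalars to $\Frac(\mb{I})$, invoking Theorem \ref{p-Euler} with the split-prime eigenvalue relation $\sp_P(\alpha) = a_p(\mb{f}_P)^2$, and combining Definition \ref{error} with Proposition \ref{alg-part} to identify $\mathbb{L}_k(\rho_k \mc{F}) = c_P(\mc{F})\, L^\alg(1, \ad(\mb{f}_P) \otimes \alpha)$. The paper's proof is terser but identical in substance, and your added remarks on the compatibility of $\sp_P$ with $\sp_k$, on the poles of $\mc{F}$ accounting for the ``almost all,'' and on ordinarity guaranteeing $a_p(\mb{f}_P) \in \overline{\Q_p}^\times$ are accurate elaborations rather than deviations.
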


\begin{proof}
The definition of $A_r$ ensures that at an arithmetic point $P \in A_r$ of weight $k$, the Hida family specializes to $\mb{f}_P \in S_{k+2}(Np, \alpha)$, whose base change $\BC(\mb{f}_P)$ has weight $(k,k)$ and trivial Nebentype.

After extending scalars, we have
\[
\mathbb{L}_\Lambda: H_c^2(Y_K, \mc{D}(\Lambda)) \otimes_{\Lambda} \Frac(\mb{I}) \to \Frac(\mb{I})
\]
and define
\[
\mc{L} := \mathbb{L}_\Lambda(\mc{F}) \in \Frac(\mb{I}).
\]
Since $p$ is split in $K$, the $U_p$-eigenvalue of $\BC(\mb{f}_P)$ is equal to $a_p(\mb{f}_P)^2$. Then the interpolation formula follows from Theorem \ref{p-Euler} and the definition of $c_P(\mc{F})$.
\end{proof}

\begin{rem}
For eigenforms $f$ with Nebentype not equal to $\alpha$, Hida's integral formula \cite{H99} for $L(1, \ad(f) \otimes \alpha)$ involves twisting the base-change Bianchi form $f_K$ by a suitable Hecke character $\varphi: \A_K^\times/K^\times \to \C^\times$. Accordingly, twisting the evaluation map $\mathbb{L}_\Lambda: H_c^2(Y_K, \mc{D}(\Lambda)) \to \Lambda$ by an appropriate character $\varphi$ will give a $p$-adic $L$-function on a different Hida family, but the set of weights at which we can determine the specialization will simply be a translate of $A_r$ as in Theorem \ref{main-thm}; in particular, all the eigenforms $f$ will have the same Nebentype. This seems to present a genuine difficulty with extending the interpolation formula to a larger domain of weights.
\end{rem}

\bibliographystyle{alpha}
\bibliography{Adjoint}

\end{document}